\newtheorem{theorem}{Theorem}[section]
\newtheorem{lemma}[theorem]{Lemma}
\newtheorem{observation}[theorem]{Observation}
\newtheorem{proposition}[theorem]{Proposition}
\newtheorem*{claim*}{Claim}
\theoremstyle{definition}
\newtheorem{definition}[theorem]{Definition}
\newtheorem{assumption}[theorem]{Assumption}
\newtheorem*{qu*}{Question}
\theoremstyle{remark}
\newcommand\E{\operatorname{\mathbb{E}}}
\newcommand\cB{\mathcal{B}}
\newcommand\cH{\mathcal{H}}
\newcommand\cM{\mathcal{M}}
\newcommand\cU{\mathcal{U}}
\renewcommand\leq{\leqslant}
\renewcommand\geq{\geqslant}
\renewcommand\le{\leqslant}
\renewcommand\ge{\geqslant}
	\def\Prob{\mathbb{P}}
	\def\G{\Gamma}
	\def\<{\langle }
	\def\>{\rangle }
\begin{document}

\title{Colour-biased Hamilton cycles in randomly perturbed  graphs}
\author{Wenchong Chen\qquad  Xinbu Cheng\qquad  Zhifei Yan}

\address{Nankai University, Weijin Road 94, Nankai District, Tianjin, 300192, P. R. China}\email{2212161@mail.nankai.edu.cn}

\address{IMPA, Estrada Dona Castorina 110, Jardim Bot\^anico,
Rio de Janeiro, 22460-320, Brazil}\email{xinbu.cheng@impa.br, zhifei.yan@impa.br}

\thanks{}

\begin{abstract}

Given a graph $G$ and an $r$-edge-colouring $\chi$ on $E(G)$, a Hamilton cycle $H\subset G$ is said to have $t$ colour-bias if $H$ contains $n/r+t$ edges of the same colour in $\chi$. Freschi, Hyde, Lada and Treglown showed every $r$-coloured graph $G$ on $n$ vertices with $\delta(G)\geq(r+1)n/2r+t$ contains a Hamilton cycle $H$ with $\Omega(t)$ colour-bias, generalizing a result of
Balogh, Csaba, Jing and Pluh\'{a}r. In 2022, Gishboliner, Krivelevich and Michaeli proved that the random graph $G(n,m)$ with $m\geq(1/2+\varepsilon)n\log n$ typically admits an $\Omega(n)$ colour biased Hamilton cycle in any $r$-colouring.

In this paper, we investigate colour-biased Hamilton cycles in randomly perturbed graphs. We show that for every $\alpha>0$, adding $m=O(n)$ random edges to a graph $G_\alpha$ with $\delta(G_\alpha)\geq \alpha n$ typically ensures a Hamilton cycle with $\Omega(n)$ colour bias in any $r$-colouring of $G_\alpha\cup G(n,m)$. Conversely, for certain $G_{\alpha}$, reducing the number of random edges to $m=o(n)$ may eliminate all colour biased Hamilton cycles of $G(n,m)\cup G$ in a certain colouring. In contrast, at the critical endpoint $\alpha=(r+1)/2r$, adding $m$ random edges typically results in a Hamilton cycle with $\Omega(m)$ colour-bias for any $1\ll m\leq n$.
\end{abstract}
	
	\maketitle
\section{Introduction}

The colour bias (or discrepancy) question for spanning structures in edge-colored graphs has attracted growing attention in recent years. This line of work includes results on factors in dense graphs~\cite{BCG}, trees in dense graphs~\cite{HLMP}, Steiner triple systems~\cite{GGS}, and perfect matchings in hypergraphs~\cite{LMX,han2024colour}. In this paper, we specifically focus on colour-biased Hamilton cycles. Let $G$ be a graph on $n$ vertices with an $r$-colouring $\chi$ on $E(G)$ for some $r\geq2$. The graph $G$ is said to contain a Hamilton cycle $H$ with $t$ colour bias, if $H$ has $n/r+t$ edges of the same colour in $\chi$. We study the following quantity:
$$h_r(G)=\min_{\chi\in X_r(G)}\max_{H\in\cH(G)}\{\textup{colour\ bias\ of}\ H\ \textup{in}\ \chi\},$$
where $\cH(G)$ is the collection of Hamilton cycle in $G$, and $X_r(G)$ is the collection of all $r$-colourings on $E(G)$. The study of $ h_r(G) $ was initiated by Balogh, Csaba, Jing and Pluh\'{a}r~\cite{BCJP}, who proved the following theorem in the case $ r = 2 $. This was then extended to general $ r \geq 2 $ by Freschi, Hyde, Lada and Treglown \cite{FHLT}.
\begin{theorem}[Freschi, Hyde, Lada and Treglown]\label{thm:FHLT1}
Let $r\geq2$, $\alpha>(r+1)/2r$ and let $n\in \mathbb{N}$ be sufficiently large. Then for every $G_\alpha$ on $n$ vertices with $\delta(G_\alpha)\geq \alpha n$,
$$h_r(G_\alpha)=\Omega(n).$$
\end{theorem}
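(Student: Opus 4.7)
The plan is to show that for an arbitrary $r$-colouring $\chi$ of $E(G_\alpha)$, the graph $G_\alpha$ admits a Hamilton cycle with $\Omega(n)$ bias in some colour, by combining the absorbing method with a structural argument that identifies colour asymmetry. Write $\eta = \alpha - (r+1)/(2r) > 0$ for the slack above the threshold.

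First, I would try to identify a ``biased'' colour $i^{*}$. The degree hypothesis gives $e(G_\alpha) \geq \alpha n^{2}/2$, so by averaging, some colour class $G_{i^{*}}$ satisfies $e_{i^{*}}(G_\alpha) \geq \alpha n^{2}/(2r)$. The real strength of the threshold $\alpha > (r+1)/(2r)$ is structural: one expects a dichotomy showing that either (a) a linear-size set $U \subseteq V(G_\alpha)$ has $d_{i^{*}}(v) \geq n/r + \Omega(\eta n)$ for every $v \in U$ (the ``degree-unbalanced'' case), or (b) the colouring is close to regular, so that $G_{i^{*}}$ admits a large structured subgraph (matching, near-spanning path, or similar) usable to build a biased Hamilton cycle. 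In parallel, using only the minimum-degree condition $\alpha n > n/2$, I would build an absorbing path $P_{abs} \subseteq G_\alpha$ of size $o(n)$ via the standard R\"odl--Ruci\'nski--Szemer\'edi absorbing method, together with a reservoir $R$ of size $\Theta(\eta n)$; the absorber can incorporate any $o(n)$ leftover vertices into a path with fixed endpoints.

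Next, working inside $G_\alpha' := G_\alpha - V(P_{abs}) - R$ (whose minimum degree remains at least $\alpha n - o(n)$), I would construct a long path biased toward colour $i^{*}$. In case (a), a greedy extension that prolongs through vertices of $U$ while always preferring colour-$i^{*}$ edges will accrue $\Omega(\eta n)$ more colour-$i^{*}$ edges than the ``fair'' share $|V(G_\alpha')|/r$. In case (b), one would exploit the density of $G_{i^{*}}$ together with a structured sub-object (e.g.\ a large colour-$i^{*}$ matching $M^{*}$) to embed $\Omega(\eta n)$ extra colour-$i^{*}$ edges into a long path. Finally, use $R$ to connect the path into a cycle with a bounded number of connecting edges, and apply $P_{abs}$ to absorb the leftover vertices ($o(n)$ extra edges), yielding a Hamilton cycle $H$ whose $\Omega(\eta n)$ colour-$i^{*}$ bias survives.

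The \textbf{main obstacle} is making the dichotomy quantitative and exploitable. One must rigorously show that the threshold $\alpha > (r+1)/(2r)$ forces at least one of cases (a) or (b) with effective constants, and that in the balanced case (b) one can extract $\Omega(\eta n)$ colour-$i^{*}$ bias even though $G_{i^{*}}$ may only have minimum degree about $\alpha n/r < n/r$ and hence does not immediately satisfy Dirac's condition. This likely requires a stability-type argument separating colourings close to an extremal construction from those far away, together with an embedding lemma that inserts a prescribed matching or path system from $G_{i^{*}}$ into a Hamilton cycle of $G_\alpha$, thereby transferring the ``excess'' colour-$i^{*}$ edges into the Hamilton cycle.
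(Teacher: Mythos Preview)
This theorem is not proved in the present paper: it is quoted as a result of Freschi, Hyde, Lada and Treglown~\cite{FHLT} (extending Balogh, Csaba, Jing and Pluh\'ar~\cite{BCJP}), and the paper gives no proof of its own. So there is no ``paper's proof'' to compare your proposal against.

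That said, it is worth comparing your outline with the techniques the paper \emph{does} develop for the closely related structural result, \Cref{prop:structure}, which handles the endpoint $\alpha=(r+1)/2r$. There the argument is not absorbing-path based at all: one first finds a short almost-monochromatic cycle $F$ (via Erd\H{o}s--Gallai), then uses P\'osa's extension of Dirac's theorem to build a second cycle $H$ covering $V(G_\alpha)\setminus V(F)$ and containing prescribed ``absorbing'' edges $x_iy_i\in G_\alpha[N(v_i)]$ for the vertices $v_i$ of $F$. The colour bias comes from a \emph{switching} argument (\Cref{lem:propkey}): one obtains two Hamilton cycles by inserting the $v_i$ either along $F$ or via the $x_iy_i$, and the difference in colour-$c^*$ counts is forced to be large. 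The delicate part is controlling the colour contribution of the triples $(v_i,x_i,y_i)$, which is done via the ``$c$-good'' notion and the bowtie constructions. This is quite different in spirit from your RRS-style absorber plus greedy biased extension.

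Your proposal, as it stands, has a genuine gap at exactly the point you flag. The dichotomy (a)/(b) is not a theorem you can cite, and it is not clear it is even true in a usable form: having one colour class with $\alpha n^2/(2r)$ edges does not by itself force either a linear-size set of high-$i^*$-degree vertices or any exploitable regularity. More seriously, in your case~(b) you need to embed $\Omega(\eta n)$ prescribed colour-$i^*$ edges into a Hamilton cycle, but the absorbing path you build in $G_\alpha$ has no colour control whatsoever, so absorbing the leftover vertices could destroy the bias you accumulated. The switching approach in~\cite{BCJP,FHLT} (and mirrored in \Cref{section 4.1,section 4.2} here) sidesteps both issues by producing \emph{two} Hamilton cycles differing only on a controlled edge set and comparing them, rather than trying to build a single biased cycle directly.
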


Freschi, Hyde, Lada and Treglown actually proved the following more general result: $h_r(G_\alpha)=\Omega(d)$ for every $G_\alpha$ with $\delta(G_\alpha)\geq (r+1)n/2r+d$ (see \cite{FHLT}, Theorem 1.3). The minimal degree condition in \Cref{thm:FHLT1} is tight. Indeed a construction from~\cite{FHLT} shows that there is an $r$-coloured graph $G_\alpha$ with $\delta(G_\alpha)\geq (r+1)n/{2r}$, where every Hamilton cycle uses the same number of edges of each colour. In other words, $h_r(G_\alpha) = 0$. 

In addition to deterministic graphs, we also have a good understanding of how $h_r$ behaves in random graphs. Gishboliner, Krivelevich, and Michaeli \cite{GKM} designed a clever DFS algorithm that finds a long path with almost all edges the same colour in any $r$-colouring of a random graph. This led to the following result.
\begin{theorem}[Gishboliner, Krivelevich and Michaeli]\label{thm:GKM}
For each $r\geq2$,
$$h_r\big(G(n,m)\big)=\Omega(n)$$
holds with high probability if $m\geq (\log n+\log\log n+\omega(1))n/2$.
\end{theorem}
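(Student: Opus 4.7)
The plan is to mimic the strategy of Gishboliner--Krivelevich--Michaeli: use a colour-prioritising depth-first search (DFS) to extract an almost-spanning path with linear colour bias, and then close it into a Hamilton cycle by Pósa rotation-extension while preserving the bias.

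\textbf{Stage 1: an almost-spanning biased path.} Fix any $r$-colouring $\chi$; by pigeonhole some colour $c^*$ is used on at least $m/r$ edges. I would expose $G(n,m)$ by drawing a uniformly random permutation $\pi$ of $\binom{V}{2}$ and declaring the first $m$ positions in $\pi$ to be edges. I then run a DFS that, at each step, probes the pairs $\{v,u\}$ with $u\in U$ (the unvisited set) in the order given by $\pi$, but with $c^*$-coloured pairs placed first, so that extensions of the current stack prefer $c^*$-edges. The aim is to show that with high probability the DFS stack reaches size $n-o(n)$, yielding a path $P$ with at least $(1/r+\delta)|P|$ edges of colour $c^*$ for some absolute constant $\delta>0$. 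The analysis combines the Krivelevich--Sudakov DFS argument---which at the Hamiltonicity threshold produces $n-o(n)$ pushes before any significant backtracking---with the structural observation that the $\Omega(n\log n)$ edges of colour $c^*$ are, for any $\chi$, dense enough that at all but $o(n)$ DFS steps the top-of-stack vertex has a $c^*$-neighbour inside $U$. A Chernoff/martingale concentration along $\pi$ then gives the linear lower bound on the number of $c^*$-coloured pushes.

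\textbf{Stage 2: closing up.} I would set aside a sub-linear reserve of random edges (absorbed into the $\omega(1)n$ slack in the hypothesis on $m$) for use as Pósa boosters. Applying rotation-extension to $P$ one reaches a Hamilton cycle $H$ after $O(n/\log n)=o(n)$ rotations. Each rotation changes only $O(1)$ edges of the current path, so $H$ retains at least $(1/r+\delta)|P|-o(n)\geq n/r+\Omega(n)$ edges of colour $c^*$. Hence $h_r(G(n,m))=\Omega(n)$.

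\textbf{Main obstacle.} The delicate step is Stage 1: because $\chi$ is adversarial the $c^*$-subgraph can be structurally awkward, and bounding the number of DFS steps at which $v$ has no available $c^*$-edge into $U$ is not automatic. The key is to exploit the random edge order $\pi$: conditional on the DFS history, every unqueried pair is near-uniformly located in $\pi$, so Chernoff-type concentration controls the number of ``colour-starved'' steps. Marrying this edge-order randomness with the worst-case colour structure is the heart of the argument.
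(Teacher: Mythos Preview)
This theorem is not proved in the paper: it is quoted from \cite{GKM} as background, and the paper only uses the auxiliary Lemma~3.1 from that work. So there is no ``paper's own proof'' to compare against beyond what \cite{GKM} does.

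That said, your proposal has a genuine gap in Stage~1. The quantifier order in the theorem is: first sample $G(n,m)$, then the adversary sees the whole graph and chooses $\chi$. Your analysis, however, wants to expose edges one pair at a time via the random permutation $\pi$ while simultaneously letting the DFS prioritise $c^*$-coloured pairs. But $c^*$ and the entire colouring $\chi$ are functions of the realised graph, i.e.\ of all of $\pi$. So when you write ``conditional on the DFS history, every unqueried pair is near-uniformly located in $\pi$'', this fails: the DFS history already encodes colour information, hence information about the full edge set, and the remaining randomness in $\pi$ is no longer clean. You cannot both let the adversary see the graph and then re-randomise the exposure order against that same adversary. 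A union bound over colourings is hopeless ($r^{\binom{n}{2}}$ of them), and restricting to an online adversary who colours edges as they are revealed would only prove a weaker statement.

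There is a second issue: the target ``the DFS stack reaches size $n-o(n)$ with at least $(1/r+\delta)|P|$ edges of colour $c^*$'' is too strong. What \cite{GKM} actually obtains (and what this paper quotes as Lemma~3.1) is a path of length roughly $2n/(r+1)$ that is almost monochromatic; one then needs a separate argument to cover the remaining vertices and close into a Hamilton cycle without destroying the bias. The DFS in \cite{GKM} does not fix a colour in advance by pigeonhole; rather, it is designed so that whatever the colouring, the resulting stack must contain a long run dominated by a single colour. That colour-agnostic design is precisely what sidesteps the circularity above, and it is the idea you are missing.
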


In this paper we will study colour-biased Hamilton cycles in \emph{randomly perturbed graphs}. The randomly perturbed graph is a union of a fixed graph and a random graph on the same vertex set, denoted by $G_\alpha\cup G(n,m)$, which provides a connection between the extremal and random settings. Bohman, Frieze and Martin~\cite{BFM} first introduced this model, and they proved the following theorem on the threshold for Hamilton cycles in $G_\alpha\cup G(n,m)$.

\begin{theorem}[Bohman, Frieze and Martin]\label{thm:BFM}
Let $\alpha>0$. There exists $m=O(n)$ such that for every $G_\alpha$ with $v(G_\alpha)=n$ and $\delta(G_\alpha)\geq\alpha n$, with high probability $G_\alpha\cup G(n,m)$ contains a Hamilton cycle.  
\end{theorem}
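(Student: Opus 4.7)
The plan is to prove Hamiltonicity of $G_\alpha\cup G(n,m)$ by combining Pósa's rotation-extension technique with a sprinkling argument. I would split the random edges into two independent batches $G(n,m)=G_1\cup G_2$ of sizes $m_1,m_2=O(n)$, each at least $Cn$ for a large constant $C=C(\alpha)$. The role of $G_1$ is to upgrade $G_\alpha$ into a graph with Pósa-type expansion, while $G_2$ is held in reserve to close the resulting Hamilton path into a cycle.

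In the first step, I would show that $G_\alpha\cup G_1$ is w.h.p.\ connected and satisfies a Pósa expansion condition: every set $S\subseteq V$ of at most a suitable linear size satisfies $|N_{G_\alpha\cup G_1}(S)\setminus S|\ge 2|S|$. On small sets $|S|\le \alpha n/3$ this follows deterministically from the minimum degree bound, $|N_{G_\alpha}(S)\setminus S|\ge \alpha n-|S|\ge 2|S|$. For larger $S$ in which $G_\alpha$'s expansion may fall short, the complementary set $T=V\setminus(S\cup N_{G_\alpha}(S))$ still has linear size whenever the $G_\alpha$-expansion is subcritical, and a Chernoff bound shows that $G_1$ contributes at least the missing neighbours with failure probability $e^{-\Omega(Cn)}$; this beats the $2^n$ union bound once $C$ is large enough in terms of $\alpha$. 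By Pósa's lemma, $G_\alpha\cup G_1$ then contains a Hamilton path $P=v_1\cdots v_n$, and iterated Pósa rotations using edges of $G_\alpha\cup G_1$ generate linear-size sets $E^-,E^+\subseteq V$ of alternative left- and right-endpoints of Hamilton paths on the same vertex set.

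In the second step, reveal $G_2$. Since $G_2$ is independent of the structure built so far, the probability that $G_2$ contains no edge between $E^-$ and $E^+$ is at most $(1-p_2)^{|E^-||E^+|}=e^{-\Omega(n)}$, where $p_2\asymp 1/n$. On this event of overwhelming probability, any $G_2$-edge $uv$ with $u\in E^-$ and $v\in E^+$, combined with the rotated Hamilton path between $u$ and $v$, yields a Hamilton cycle of $G_\alpha\cup G(n,m)$.

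The main obstacle is the expansion analysis of Step 1 in the intermediate range, where $|S|$ is close to the Pósa threshold: here $G_\alpha$ provides little expansion, yet the random batch $G_1$ is too sparse to cover all of $V\setminus S$. One cannot simply ask that every vertex outside $S$ become a $G_1$-neighbour, since with only $O(n)$ random edges the average $G_1$-degree is $O(1)$. The resolution, in the spirit of the original Bohman--Frieze--Martin argument, is a two-phase rotation-extension: first use $G_\alpha$ together with a portion of $G_1$ to produce a path of length $n-o(n)$, then use a reserved portion of $G_1$ to absorb the $o(n)$ leftover vertices via further rotations before closing with $G_2$. All probability estimates must be tight, which is what makes this step the technical heart of the argument.
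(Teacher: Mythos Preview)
The paper does not give its own proof of this theorem; it is quoted as a result of Bohman, Frieze and Martin. What the paper does prove is the stronger colour-bias statement (Theorem~\ref{thm:0<alpha<3/4'}), whose proof implies Theorem~\ref{thm:BFM} as a special case. That proof uses an \emph{absorption} scheme, not P\'osa rotation: one first builds a short absorber path $A\subset G_\alpha\cup R_1$ with the property that for every vertex $v$ there are $\Omega(n)$ edges of $A$ inside $N_{G_\alpha}(v)$ (Lemma~\ref{lem:absorber}); one then finds long paths covering all but $o(n)$ vertices using the $k$-joined property of $R_2$ (Lemmas~\ref{lem:longpath} and~\ref{lem:mjointed}), connects everything into a near-spanning cycle via $R_3$, and absorbs the leftover vertices into $A$ deterministically.

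Your route via P\'osa rotation--extension is the natural one, and is closer to the original Bohman--Frieze--Martin argument, but as written it has a genuine gap. In Step~1 you assert that $G_\alpha\cup G_1$ satisfies P\'osa expansion and hence contains a Hamilton path. This cannot hold with only $O(n)$ random edges: to deduce a Hamilton path from P\'osa's lemma you need $|N(S)\setminus S|\ge 2|S|$ for all $|S|\le (n-1)/3$, and for $|S|$ near $n/3$ this forces essentially every vertex of $V\setminus S$ to be a neighbour of $S$. A random graph of constant average degree cannot provide this, and no Chernoff/union-bound argument will rescue it in that range. You correctly identify this obstacle in your final paragraph, but the proposed fix (``produce a path of length $n-o(n)$, then absorb the $o(n)$ leftover vertices via further rotations'') is too vague to stand as a proof: rotations change endpoints of an existing path but do not by themselves incorporate new vertices, so you would need to spell out an actual extension step for each leftover vertex and argue that the reserved random edges suffice for all of them simultaneously. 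The paper's absorber construction is precisely a clean way to handle this last-mile absorption without relying on global expansion.
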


In recent years, the study of randomly perturbed graphs has yielded a number of significant results. These include the thresholds of clique factors~\cite{bwt2, HPT}, powers of Hamilton cycles~\cite{benn}, and various spanning structures such as bounded degree trees~\cite{joos2, kks2, bmpp2}. Furthermore, there has been important progress on the Ramsey properties of such graphs, especially for cliques and cycles~\cite{das}. Our first result generalizes \Cref{thm:BFM}, which shows that adding $ O(n) $ random edges suffices to ensure a Hamilton cycle in $ G_\alpha\cup G(n, m) $. We strengthen this by proving that the addition of only $ O(n) $ random edges also guarantees an $ \Omega(n) $-colour-biased Hamilton cycle in $ G_\alpha\cup G(n, m) $ under any $ r $-colouring.

\begin{theorem}\label{thm:0<alpha<3/4}
Let $r\geq2$ and $\alpha>0$. There exists $m=O(n)$ such that for every $G_\alpha$ with $v(G_\alpha)=n$ and $\delta(G_\alpha)\geq\alpha n$, with high probability any $r$-colouring of $E(G_\alpha \cup G(n,m))$ contains a Hamilton cycle with $\Omega(n)$ colour bias. In other words,
$$h_r\big(G_\alpha\cup G(n,m)\big)=\Omega(n)$$
with high probability.
\end{theorem}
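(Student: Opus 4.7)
Fix a sufficiently large constant $C=C(\alpha,r)$, let $m=Cn$, and fix an arbitrary $r$-colouring $\chi$ of $E(G_\alpha\cup G(n,m))$. By pigeonhole on $E(G_\alpha)$ (whose size is at least $\alpha n^2/2$), some colour $c$ accounts for at least $\alpha n^2/(2r)$ colour-$c$ edges in $G_\alpha$; fix such a $c$. The plan is to build a Hamilton cycle with at least $n/r+\Omega(n)$ colour-$c$ edges via the standard three-step strategy for randomly perturbed graphs.

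First, reserve a sublinear pool $E_{\mathrm{abs}}\subseteq E(G(n,m))$ and use $E_{\mathrm{abs}}$ together with the minimum-degree condition on $G_\alpha$ to build an absorbing path $P_{\mathrm{abs}}$ of sublinear length capable of absorbing any sufficiently small set of leftover vertices; this is a direct adaptation of the absorber used in the proof of Theorem~\ref{thm:BFM}. Next, on $V':=V\setminus V(P_{\mathrm{abs}})$, construct a linear forest $F\subseteq(G_\alpha\cup G(n,m))[V']$ whose colour-$c$ edges number at least $n/r+\eta n$ for some $\eta=\eta(\alpha,r)>0$, with at most $\eta n/10$ components and missing at most $\eta n/10$ vertices of $V'$. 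Finally, the remaining random edges are used to join the endpoints of the paths in $F$ and $P_{\mathrm{abs}}$ into a single cycle, and any leftover vertices are absorbed via $P_{\mathrm{abs}}$; since only sublinearly many closing edges are added outside of $F$, the resulting Hamilton cycle contains at least $n/r+\Omega(n)$ colour-$c$ edges.

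The main obstacle is the construction of the biased backbone $F$. The colour-$c$ subgraph of $G_\alpha$ can be highly degenerate, for example concentrated on a vertex cover of size $\Theta(n/r)$, so that its maximum matching (let alone its maximum linear forest) is much smaller than $n/r$; consequently the random edges must provide genuine bias, not just connectivity. My approach here would be a dichotomy based on a second pigeonhole applied to $E(G(n,m))$: some colour $c'$ contains at least $Cn/r$ random edges. If $c'=c$, splice the $\Omega(n)$ colour-$c$ random edges into a maximum colour-$c$ linear forest of $G_\alpha$ via a greedy exchange procedure that avoids creating short cycles, thereby extending $F$ to the required size. If $c'\neq c$, then colour $c$ dominates the edges of $G_\alpha$ relative to those of $G(n,m)$, and a DFS-style algorithm on the colour-$c$ subgraph of $G_\alpha$ in the spirit of \cite{GKM} would produce long colour-$c$ paths that assemble into $F$.

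The truly delicate case is $r=2$, where $n/r=n/2$ exceeds the maximum matching size in any graph, so $F$ must be a genuine linear forest with paths strictly longer than a single edge; the main technical challenge is verifying, uniformly across all adversarial colourings, that either the splicing mechanism or the DFS mechanism (or a hybrid of the two) produces such a linear forest.
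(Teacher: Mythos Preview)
Your proposal has a genuine gap in the $c'\neq c$ branch of your dichotomy. You want to run a DFS-style argument on the colour-$c$ subgraph of $G_\alpha$ to extract a linear forest with $n/r+\eta n$ colour-$c$ edges, but the GKM machinery relies on the $k$-joinedness of a \emph{random} graph and has no analogue for an arbitrary dense colour class of $G_\alpha$. Concretely: the adversary can arrange that the colour-$c$ subgraph of $G_\alpha$ is bipartite with one part of size $\beta n$ for some small $\beta$ (this is compatible with $\Omega(n^2)$ colour-$c$ edges), so that every colour-$c$ linear forest has at most $2\beta n$ edges; for $\beta<1/(2r)$ this is strictly below $n/r$. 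Since the adversary is also free to colour all of $G(n,m)$ with colours other than $c$, you are then forced to find the entire bias inside $G_\alpha$, which is impossible. Your $c'=c$ branch is also not fleshed out for $r=2$, as you yourself note, since there a matching cannot supply $n/2+\eta n$ edges and your ``splicing'' step would have to produce genuine paths.

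The paper avoids all of this by \emph{not} trying to extract the bias from $G_\alpha$ at all. The key observation is the inequality $2/(r+1)>1/r$: Gishboliner--Krivelevich--Michaeli's lemma (applied to the random graph alone, with $p=C/n$ for $C$ large) gives, in \emph{any} $r$-colouring, a path of length $(2/(r+1)-\varepsilon)n$ with all but $O(1)$ edges of a single colour, and this length already exceeds $n/r$ by a linear amount. The deterministic graph $G_\alpha$ is used only to build the absorber (exploiting $\delta(G_\alpha)\ge\alpha n$ to find many absorbing triangles for each vertex). A second long path covers most of the remaining vertices, a third sprinkle of random edges connects the three pieces into a near-spanning cycle, and the absorber swallows the $o(n)$ leftovers. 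The colour of the bias is dictated by the random graph, not chosen in advance.
\qed
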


Note that by \Cref{thm:FHLT1}, if the graph $ G_\alpha$ has minimum degree $ \alpha n $ with $ \alpha > (r+1)/2r $, then no random edges are required: any $ r $-edge-colouring of $ G_\alpha$ already ensures the existence of a Hamilton cycle with colour bias $ \Omega(n) $. In contrast, for $ 0 < \alpha < (r+1)/2r $, the addition of random edges becomes essential in a strong sense: for any $ m = o(n) $, there exists a construction with minimum degree $ \alpha n $ in which every Hamilton cycle has an equal number of edges of each colour---that is, colour bias zero.

\begin{proposition}\label{prop:upperbound}
Let $r\geq2$ and $0<\alpha<(r+1)/2r$ be constants, and let $m=o(n)$. For all sufficiently large $n$ with $2r\,|\, n$, there exists a graph $G_\alpha$ with $v(G_\alpha)=n$ and $\delta(G_\alpha)\geq\alpha n$ such that 
$$h_r\big(G_\alpha\cup G(n,m)\big)=0$$  holds deterministically. 
\end{proposition}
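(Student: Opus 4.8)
The plan is to take the extremal construction underlying the tightness of \Cref{thm:FHLT1} and build enough slack into it to absorb the random edges. Fix a positive integer $a$ with $n/2\le a<(r+1)n/2r$ and $a\ge\alpha n$; since $\alpha<(r+1)/2r$ is a fixed constant, $a:=\max\{\lceil\alpha n\rceil,\,n/2\}$ works once $n$ is large. Let $G_\alpha$ be the complete split graph with parts $A$ and $B$, where $|A|=a$, $|B|=n-a$, the set $A$ induces a clique, $B$ is an independent set, and all $A$--$B$ pairs are edges. Then $\delta(G_\alpha)=a\ge\alpha n$, and since $a\ge n/2$ the graph $G_\alpha$ is Hamiltonian, so $G_\alpha\cup G(n,m)$ is Hamiltonian and $h_r$ is well defined and nonnegative; the point of $a<(r+1)n/2r$ is that it leaves room to force \emph{balance} rather than bias. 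The key structural observation is that, because $A$ is already complete and all $A$--$B$ edges are present, the only edges that $G(n,m)$ can contribute to $G_\alpha\cup G(n,m)$ lie inside $B$.

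Since $G_\alpha$ is fixed, I may choose the colouring witnessing $h_r=0$ after the random graph has been revealed. Let $F$ denote the set of edges of $G(n,m)$ lying inside $B$; then $|F|\le m=o(n)$, so $F$ has a vertex cover $C$ with $|C|\le m$. Using $2r\mid n$, partition $B=B_1\sqcup B_2\sqcup\dots\sqcup B_r$ with $|B_i|=n/2r$ for every $i\in\{2,\dots,r\}$ and $|B_1|=(r+1)n/2r-a$, and do this so that $C\subseteq B_1$; this is possible since $|B_1|=(r+1)n/2r-a=\Theta(n)$ (because $\alpha<(r+1)/2r$ and $1/2<(r+1)/2r$), while $|C|=o(n)$. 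Now colour each edge of $G_\alpha\cup G(n,m)$ with colour $i$ if it meets $B_i$ for some $i\in\{2,\dots,r\}$, and with colour $1$ otherwise. This rule is well defined: an edge meeting two distinct parts $B_i,B_j$ with $i,j\ge2$ would lie inside $B$, hence belong to $F$, and yet avoid $C\subseteq B_1$, contradicting that $C$ is a vertex cover of $F$. In particular each $B_i$ with $i\ge2$ is an independent set of $G_\alpha\cup G(n,m)$, and colour $i$ is used exactly on the edges incident to $B_i$.

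Granting this, balance is immediate. Let $H$ be any Hamilton cycle of $G_\alpha\cup G(n,m)$ and fix $i\in\{2,\dots,r\}$. Since $B_i$ is independent, the $2|B_i|$ edge-ends of $H$ at vertices of $B_i$ lie on $2|B_i|$ distinct edges of $H$, and these are exactly the colour-$i$ edges of $H$; hence $H$ has precisely $2|B_i|=n/r$ edges of colour $i$. As the colour classes are disjoint, $H$ has $(r-1)n/r$ edges of colours $2,\dots,r$ together, hence exactly $n/r$ edges of colour $1$ as well. So every Hamilton cycle of $G_\alpha\cup G(n,m)$ uses $n/r$ edges of each colour, i.e.\ has colour bias $0$, which gives $h_r(G_\alpha\cup G(n,m))=0$ for every realisation of $G(n,m)$.

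The one genuine difficulty is the point addressed in the second paragraph. In the purely deterministic extremal example $B$ is independent and balance is forced for free, but a single random edge inside $B$ could be traversed by some Hamilton cycle and destroy the \emph{exact} balance, of which no discrepancy at all is tolerated. The resolution is that the colouring may be picked after seeing $G(n,m)$, so a vertex cover of the at most $m=o(n)$ stray edges inside $B$ can be hidden inside the large colour class $B_1$ — and $B_1$ has room to spare precisely because $\alpha<(r+1)/2r$. Everything else is routine bookkeeping with the minimum-degree and divisibility conditions.
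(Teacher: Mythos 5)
Your proof is correct and follows essentially the same route as the paper: both use the complete split graph for $G_\alpha$, observe that random edges can only land in the independent part, excise the affected vertices (your vertex cover $C$, the paper's deletion of one endpoint per random edge) to recover an independent set of size at least $(r-1)n/2r$ in $G_\alpha\cup F$, partition that into $r-1$ blocks of size $n/2r$, and colour by incidence so that each Hamilton cycle is automatically balanced. The one genuine refinement you add is taking the clique part of size $\max\{\lceil\alpha n\rceil,\,n/2\}$ rather than exactly $\alpha n$, which guarantees $G_\alpha\cup G(n,m)$ is actually Hamiltonian even when $\alpha<1/2$; the paper's construction is non‑Hamiltonian in that regime, making the ``equal number of edges of each colour'' conclusion vacuous there, so your version closes a small definitional loose end.
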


Interestingly, in the critical case $ \alpha = (r+1)/2r$, the value of $ h_r(G_\alpha \cup G(n, m)) $ exhibits a delicate dependence on the number $m$ of added random edges when $ m = o(n) $. Our main result is the following theorem, which establishes that for every $G_\alpha$ with $\delta(G_\alpha)\geq\alpha n=(r+1)n/2r$ and any $1\ll m\leq n$, the randomly perturbed graph $G_\alpha\cup G(n,m)$ typically contains a Hamilton cycle with $\Omega(m)$ colour bias in any $r$-colouring. Moreover, there also exists a construction such that every Hamilton cycle has $O(m)$ colour bias.

\begin{theorem}\label{thm:alpha=3/4}
Let $r\geq2$ and $\alpha=(r+1)/2r$. For any $1\ll m\leq n$ and every $G_\alpha$ with $v(G_\alpha)=n$ and $\delta(G_\alpha)\geq\alpha n$, with high probability
$$h_r\big(G_\alpha\cup G(n,m)\big)=\Omega(m).$$
Moreover, for any $m\geq 0$ there exists $G_\alpha$ with $\delta(G_\alpha)\geq\alpha n$ such that
$$h_r\big(G_\alpha\cup G(n,m)\big)=O(m)$$
holds deterministically.
\end{theorem}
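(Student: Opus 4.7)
The theorem has two parts, which we handle separately.

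\textbf{Upper bound (construction).} The plan is to adapt the Freschi--Hyde--Lada--Treglown tight extremal construction and argue that adding $m$ arbitrary edges disturbs the otherwise balanced colour counts by at most $O(m)$. Partition $V = A \sqcup B$ with $|A| = (r+1)n/(2r)$ and $|B| = (r-1)n/(2r)$, further split $B = B_1 \sqcup \dots \sqcup B_{r-1}$ into parts of equal size $n/(2r)$, and let $G^{\star}_{\alpha}$ consist of all edges inside $A$ together with the complete bipartite graph between $A$ and $B$; then $\delta(G^{\star}_{\alpha}) = (r+1)n/(2r)$. Colour all $A$--$A$ edges with colour $1$ and all $A$--$B_i$ edges with colour $i+1$ for $1 \le i \le r-1$. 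Since every Hamilton cycle of $G^{\star}_{\alpha}$ contains exactly $2|B_i| = n/r$ edges into $B_i$ and $n - 2|B| = n/r$ edges inside $A$, every colour appears exactly $n/r$ times in every Hamilton cycle of $G^{\star}_{\alpha}$. Now consider any Hamilton cycle $H \subset G^{\star}_{\alpha} \cup G(n,m)$ in any extension of the colouring: $H$ contains at most $m$ non-$G^{\star}_{\alpha}$ edges, and a sequence of $O(m)$ local $2$-switches---each valid because $\delta(G^{\star}_{\alpha}) > n/2$ guarantees $\Omega(n)$ compatible partners at every step---rewrites $H$ into a Hamilton cycle $H' \subset G^{\star}_{\alpha}$ with $|H \triangle H'| = O(m)$. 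Since $|H'|_c = n/r$ for every colour $c$, we deduce $\big||H|_c - n/r\big| = O(m)$, so $h_r(G^{\star}_{\alpha} \cup G(n,m)) = O(m)$ deterministically.

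\textbf{Lower bound.} The plan is to exhibit two Hamilton cycles of $G_\alpha \cup G(n,m)$ whose counts in some colour $c^{\star}$ differ by $\Omega(m)$. First, since $m \le n$, a routine first-moment calculation shows that $G(n,m)$ w.h.p.\ contains a matching of size at least $m - O(m^2/n) = \Omega(m)$; by pigeonhole over the $r$ colours, some colour $c^{\star}$ contributes $\Omega(m)$ of these edges, giving a monochromatic sub-matching $M^{\star}$. Second, by Dirac's theorem $G_\alpha$ contains a Hamilton cycle $H_0$, and because $\delta(G_\alpha) > n/2$ provides $\Omega(n)$ valid rewiring partners at every insertion point, a standard rotation--extension argument produces a Hamilton cycle $H_1 \subset G_\alpha \cup G(n,m)$ containing all of $M^{\star}$, with $|H_0 \triangle H_1| = O(m)$. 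Third, each insertion of $e \in M^{\star}$ alters $|H|_{c^{\star}}$ by $\delta_e \in \{-1, 0, 1, 2\}$ (depending on whether the rewiring edges added and $H_0$-edges removed are coloured $c^{\star}$), and at each matching edge one has four possible $2$-switches to choose from (coming from the two neighbours of $u$ and the two neighbours of $v$ on $H_0$). A discrepancy/pigeonhole argument now gives one of two outcomes: either enough insertions admit a choice with $\delta_e \ne 0$ that a suitable subset of switches yields a cycle differing from $H_0$ in $c^{\star}$-count by $\Omega(m)$; or the adversarial colouring of $G_\alpha$ near the $M^{\star}$-vertices is so rigid (forced by requiring $\delta_e = 0$ in all four rotations) that the $c^{\star}$-edges of $G_\alpha$ concentrate in a way making $|H_0|_{c^{\star}}$ itself deviate from $n/r$ by $\Omega(m)$. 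In either branch some Hamilton cycle witnesses $\Omega(m)$ colour bias.

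\textbf{Main obstacle.} The crux of the proof is the dichotomy in the last step: the adversary may colour $G_\alpha$ so as to locally cancel the $c^{\star}$-gain from each inserted matching edge, so no single switch changes the colour count. Establishing that such systematic cancellation imposes rigid structural constraints on the colouring near $M^{\star}$, which in turn force a macroscopic $c^{\star}$-imbalance already visible in $H_0$, while simultaneously arranging $\Omega(m)$ vertex-disjoint rotations without mutual interference, is the main technical burden.
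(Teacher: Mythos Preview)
Your construction is the same as the paper's (Lemma~2.3). The rewriting via $2$-switches is more effort than needed, and the claim that $\delta(G^{\star}_{\alpha}) > n/2$ guarantees a valid switch at each step is not immediate (a single $2$-switch inserting a prescribed edge $uv$ needs a specific closing edge, which may be a $B$--$B$ non-edge). The paper simply counts: any Hamilton cycle of $G^{\star}_{\alpha}\cup F$ contains at least $2|B_i|-2e(F)\ge n/r-2m$ edges of colour $i+1$ for each $i$, hence at most $n/r+2(r-1)m$ edges of colour $1$.

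\textbf{Lower bound.} The dichotomy you pose is not valid, and this is a genuine gap rather than a technicality. Take $G_\alpha=G^{\star}_{\alpha}$ itself, so every Hamilton cycle of $G_\alpha$ is perfectly balanced and the branch ``$|H_0|_{c^{\star}}$ already deviates'' is closed. Now let the adversary (for $r=2$, say) colour every random edge with both endpoints in $A$ by colour $1$ and all other random edges by colour $2$. Since $|A|^2/n^2=9/16$, colour $1$ is the plurality and your $M^{\star}$ consists entirely of colour-$1$ random edges lying inside the clique $A$. For any such $uv$ and any valid $2$-switch inserting it (remove $uu'$, $vv'$; add $uv$, $u'v'$), one checks $\delta_1=0$: if $u',v'\in A$ all four edges are colour $1$; if exactly one of $u',v'$ lies in $B$ then the two $A$--$B$ edges involved carry the same colour $2$; and $u',v'\in B$ gives $u'v'\notin G_\alpha$, so the switch is invalid. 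Thus the first branch fails as well. The underlying reason is that the useful random edges in this instance are those landing inside the independent set $B$ --- each one, \emph{regardless of its colour}, lets a Hamilton cycle trade two $A$--$B$ edges for an extra $A$--$A$ edge of colour $1$ --- and your colour-pigeonhole selection of $M^{\star}$ systematically discards them.

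The paper's route is to first prove, with no randomness involved, a structural characterisation (Proposition~4.2): if $G_\alpha$ has no Hamilton cycle of colour bias $b$, then there exist a colour $c^{*}$ and a set $U$ of size $\alpha n$ in which every $c^{*}$-free matching in $G_\alpha[U]$ has fewer than $2^{9}rb$ edges. The random graph is then used only to guarantee a matching of size $\Omega(m)$ inside $W=V\setminus U$ (its colouring is irrelevant); P\'osa's theorem produces a Hamilton cycle $H$ containing this matching, and a direct count over $E(H)\cap E(G[U])$, $E(H)\cap E(G[W])$, $E(H)\cap E(G[U,W])$ shows that $H$ has at least $n/r+\Omega(m)$ edges of colour $c^{*}$. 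Proving Proposition~4.2 is the real substance of the lower bound and requires separate machinery (a near-monochromatic short cycle absorbed in two different ways, plus two types of ``bowtie'' gadgets handling exceptional vertices); your local-switch framework does not produce it.
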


The proof of \Cref{thm:alpha=3/4} is the most interesting and difficult part of the paper. The key step is a structural characterization of graphs $G_\alpha$ in which every Hamilton cycle has colour bias $ o(m) $; see \Cref{prop:structure}. Specifically, we show that if an $ r $-edge-coloured graph $ G_\alpha $ admits no Hamilton cycle with colour bias $\Omega(m)$, then there exists a subset $W \subseteq V(G_\alpha)$ with $|W|\geq\delta(G_\alpha)$, such that the induced subgraph $G_\alpha[W]$ is almost monochromatic for some colour $c^*$, that is, every matching in $G_\alpha[W]$ without edges of colour $c^*$ has at most $o(m)$ edges. Suppose the condition fails, i.e., there exists a $c$-free matching of size $ \Omega(m) $ within the neighborhood of each vertex $ v \in V(G_\alpha)$ for every colour $c$. In this case, we construct a Hamilton cycle with colour bias $ \Omega(m) $ by forming two cycles whose union covers all of $ V(G_\alpha) $ and which intersect in exactly two vertices; see \Cref{lem:propkey}. One of these cycles, denoted $ F $, has $ \Omega(m) $ edges and is nearly monochromatic; see \Cref{lem:mccycle}. The main task is to construct the second cycle $ H $ so that every vertex of $ F $ can be absorbed into $ H $ in two distinct ways, enabling us to control the colouring of the resulting Hamilton cycle and ensure its colour bias is $ \Omega(m)$. In \Cref{section 4.3}, we show that this structural characterization implies the presence of a Hamilton cycle with colour bias $ \Omega(m) $ in $ G_\alpha \cup G(n, m) $ with high probability, for any $ m = \omega(1) $, regardless of the colouring of the random edges.

The remainder of this paper is organized as follows. In \Cref{section 2}, we give two constructions yielding upper bounds on $h_r(G_\alpha \cup G(n,m))$ for different $\alpha$, showing the number of random edges in \Cref{thm:0<alpha<3/4,thm:alpha=3/4} are tight. In \Cref{section 3}, we apply an absorption method to prove \Cref{thm:0<alpha<3/4}. Finally \Cref{thm:alpha=3/4} is proved in \Cref{section 4}: the structural \Cref{prop:structure} is established in \Cref{section 4.1,section 4.2}, and we finish the proof in \Cref{section 4.3}.

\medskip

\section{Upper bounds: two simple constructions}\label{section 2}

In this section, we present two straightforward constructions which establish upper bounds for $h_r\big(G_\alpha\cup G(m,n)\big)$ for different value of $\alpha$. For convenience we denote $\G_\alpha(n)$ denotes to be the collection of graphs on $n$ vertices with minimum degree $\alpha n$:
$$\G_\alpha(n):=\big\{ G : v(G)=n,\ \delta(G)\geq \alpha n \big\}.$$

First for $0<\alpha<(1+r)/2r$, adding $ m = o(n) $ random edges to $ G_\alpha $ may not yield a Hamilton cycle with $ \Omega(n) $ colour bias in every $ r $-colouring. To be precise, we use the following lemma to show \Cref{prop:upperbound}.

\begin{lemma}\label{lem:upper(0<alpha<3/4)}
Let $ r \geq 2 $ and $ 0 < \alpha < {(1 + r)}/{2r} $, and let $ m = o(n) $. For all sufficiently large $ n $ with $2r\, |\, n$, there exists a graph $ G_\alpha \in \G_\alpha(n) $ such that the following holds. For every graph $F$ with $V(F)=V(G_\alpha)$ and $e(F)=m$, there exists an $ r $-edge-colouring of $ G_\alpha \cup F $ such that every Hamilton cycle has an equal number of edges of each colour.
\end{lemma}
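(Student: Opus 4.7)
The plan is to exhibit, for each $\alpha\in(0,(r+1)/(2r))$, an explicit graph $G_\alpha\in\G_\alpha(n)$ together with, for every $F$ on $V(G_\alpha)$ with $e(F)=m=o(n)$, an $r$-edge-colouring of $G_\alpha\cup F$ in which every Hamilton cycle has exactly $n/r$ edges of each colour. The argument naturally splits into two regimes depending on the value of $\alpha$.

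\textbf{Regime 1: $\alpha<1/2$.} Take $G_\alpha=K_{V_1,V_2}$, the complete bipartite graph with parts of slightly unequal size $|V_1|=n/2-m-1$ and $|V_2|=n/2+m+1$; then $\delta(G_\alpha)=|V_1|\geq\alpha n$ for $n$ sufficiently large (using $m=o(n)$ and $\alpha<1/2$). A standard parity count shows that in any Hamilton cycle of a supergraph of $G_\alpha$ on $V_1\cup V_2$, the number of within-$V_2$ edges exceeds the number of within-$V_1$ edges by $|V_2|-|V_1|=2(m+1)$. Since $F$ contains only $m$ edges in total, it cannot supply $\geq2(m+1)$ within-$V_2$ edges, so $G_\alpha\cup F$ has no Hamilton cycle and the conclusion holds vacuously, regardless of the colouring chosen.

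\textbf{Regime 2: $1/2\leq\alpha<(r+1)/(2r)$.} Take $V=V_1\cup V_2$ with $|V_1|=|V_2|=n/2$, and sub-partition each half into $r$ equal classes $V_i=A_i^{(1)}\cup\cdots\cup A_i^{(r)}$ of size $n/(2r)$. Define $G_\alpha$ to be $K_{V_1,V_2}$ together with a $\lceil(\alpha-1/2)n\rceil$-regular graph inside each sub-class $A_i^{(j)}$. The condition $\alpha-1/2<1/(2r)$ ensures $\lceil(\alpha-1/2)n\rceil<|A_i^{(j)}|=n/(2r)$, so the required regular graphs exist and $\delta(G_\alpha)\geq n/2+\lceil(\alpha-1/2)n\rceil\geq\alpha n$. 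Assign each $v\in A_i^{(j)}$ a label $\ell(v)=j$. Given $F$, design the colouring so that every edge of $G_\alpha$ contributes predictably to its sub-class's colour: bipartite edges of $G_\alpha\cup F$ are coloured by the label of the $V_1$-endpoint, and the internal $G_\alpha$-edges (all confined to a single sub-class $A_i^{(j)}$) inherit colour $j$. The colour of each $F$-edge is then chosen adaptively based on $F$ to absorb any cycle-dependent discrepancy; since $e(F)=o(n)$, this can be done so that, for every colour $j\in[r]$, a double-counting over vertices of $A_1^{(j)}$, combined with the symmetry $|V_1|=|V_2|$ (which forces the number of within-$V_1$ and within-$V_2$ cycle-edges to match up to the contribution of $F$), yields exactly $2|A_1^{(j)}|=n/r$ colour-$j$ edges in every Hamilton cycle.

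\textbf{Main obstacle.} The crux of the argument is Regime 2: designing the colouring so that \emph{every} Hamilton cycle of $G_\alpha\cup F$ is balanced, despite different cycles using different subsets of $F$-edges. The colouring must satisfy a uniform double-counting identity across all Hamilton cycles simultaneously, which requires the colours assigned to $F$-edges (and possibly a few ``cross-sub-class'' edges) to be carefully tailored to $F$. The two features that make this feasible are: the confinement of $G_\alpha$'s internal edges to tiny sub-classes of size $n/(2r)$ (so each internal edge contributes cleanly to one colour), and the smallness of $F$ (only $o(n)$ edges, so the adaptive correction is local and does not interfere with the bulk contribution from the bipartite skeleton).
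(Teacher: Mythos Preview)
Your Regime~1 argument is correct (and a nice observation), but Regime~2 has a genuine gap that cannot be patched within your framework.

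The problem is not the $F$-edges but the internal $G_\alpha$-edges you planted inside the sub-classes $A_i^{(j)}$. Take your colouring and forget $F$ entirely. For a Hamilton cycle $H\subset G_\alpha$, write $a_i^{(j)}$ for the number of $H$-edges inside $A_i^{(j)}$ and $b^{(j)}$ for the number of bipartite $H$-edges with $V_1$-endpoint in $A_1^{(j)}$. Summing degrees in $A_1^{(j)}$ gives $2a_1^{(j)}+b^{(j)}=2|A_1^{(j)}|=n/r$, so the number of colour-$j$ edges in $H$ is
\[
b^{(j)}+a_1^{(j)}+a_2^{(j)}=\tfrac{n}{r}-a_1^{(j)}+a_2^{(j)}.
\]
The symmetry $|V_1|=|V_2|$ only forces $\sum_j a_1^{(j)}=\sum_j a_2^{(j)}$, not $a_1^{(j)}=a_2^{(j)}$ for each $j$. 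Since your internal regular graphs are nontrivial (and for $\alpha$ close to $(r+1)/2r$ they are nearly complete on $n/(2r)$ vertices), there exist Hamilton cycles with $a_1^{(1)}-a_2^{(1)}$ of order $\Theta(n/r)$ in either direction. No colouring of the $o(n)$ edges of $F$ can correct a $\Theta(n)$ discrepancy that already varies across Hamilton cycles of $G_\alpha$ itself.

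The paper sidesteps this entirely via a single clean observation: if a graph $G$ on $n$ vertices (with $2r\mid n$) has an independent set $A$ of size $(r-1)n/(2r)$, then partitioning $A=A_1\cup\cdots\cup A_{r-1}$ into equal parts and colouring every edge by the index of the $A_i$ it meets (colour $r$ if it meets none) forces \emph{every} Hamilton cycle to have exactly $2|A_i|=n/r$ edges of each colour, because $A$ is independent so each vertex of $A_i$ contributes exactly two colour-$i$ edges with no double-counting. The construction is then simply $G_\alpha=$ (clique on $B$ of size $\alpha n$) $+$ (all edges between $B$ and an independent set $A$ of size $(1-\alpha)n$); since $1-\alpha>(r-1)/(2r)$ and $m=o(n)$, one has $\alpha(G_\alpha\cup F)\geq(1-\alpha)n-m\geq(r-1)n/(2r)$, and the observation applies directly to $G_\alpha\cup F$. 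This handles all $\alpha\in(0,(r+1)/2r)$ uniformly, with no case split and no adaptive colouring of $F$.
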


Our construction is based on the following lemma, which is obtained by slightly modifying the Extremal Example 1 in \cite{FHLT}. For completeness we present the proof. We write $\alpha(G)$ to denote the independence number of $G$.

\begin{lemma}\label{lem:uppergeneral}
Let $r\geq 2$ and $n\in\mathbb{N}$, with $2r\, |\, n$. If $G$ is a graph on $n$ vertices with 
$$\alpha(G)\geq \frac{r-1}{2r}\cdot n,$$
then there exists an $r$-colouring $\chi$ of $E(G)$, such that every Hamilton cycle in $G$ contains the same number of edges of each colour in $\chi$.   
\end{lemma}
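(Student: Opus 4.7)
The plan is to use the independent set itself as the scaffolding for the colouring. Fix an independent set $I\subseteq V(G)$ of size exactly $(r-1)n/(2r)$; this is possible because the hypothesis $\alpha(G)\geq (r-1)n/(2r)$ holds and the bound is an integer thanks to $2r\mid n$. The key observation is that, in any Hamilton cycle $H$, each vertex of $I$ contributes exactly $2$ edges to $H$, and since $I$ is independent, each such edge has exactly one endpoint in $I$. Hence the number of edges of $H$ meeting $I$ is exactly $2|I|=(r-1)n/r$, and the number of edges of $H$ lying entirely inside $V(G)\setminus I$ is exactly $n-(r-1)n/r=n/r$. Both counts are \emph{independent of the particular Hamilton cycle chosen}, which is precisely what we need.

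Next I would design the colouring so as to distribute the $(r-1)n/r$ ``$I$-edges'' evenly among the first $r-1$ colours. Since $2r\mid n$, the number $|I|/(r-1)=n/(2r)$ is an integer, so we can partition $I$ arbitrarily into $r-1$ parts $I_1,\dots,I_{r-1}$ of equal size $n/(2r)$. Define $\chi$ by giving every edge incident to $I_j$ the colour $j$ (well-defined since $I$ is independent), and giving every edge inside $V(G)\setminus I$ the colour $r$.

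Finally I verify the colour counts on an arbitrary Hamilton cycle $H$. For each $j\in\{1,\dots,r-1\}$, each vertex $v\in I_j$ contributes $2$ edges to $H$, all coloured $j$, so the number of colour-$j$ edges in $H$ equals $2|I_j|=n/r$. The remaining $n-(r-1)\cdot n/r=n/r$ edges of $H$ lie inside $V(G)\setminus I$ and are all coloured $r$. Thus every colour appears exactly $n/r$ times, as required.

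The proof is essentially a double-counting argument once the correct independent set and partition are fixed; no step presents a real obstacle. The only mild subtlety is the integrality condition, which is why the hypothesis $2r\mid n$ is built into the statement: it is used both to extract an independent set of the exact prescribed size and to split it into $r-1$ parts of equal integer size.
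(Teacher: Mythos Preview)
Your proof is correct and essentially identical to the paper's: both fix an independent set of size $(r-1)n/(2r)$, partition it into $r-1$ equal parts $I_1,\dots,I_{r-1}$, colour all edges incident to $I_j$ with colour $j$ and all edges inside $V(G)\setminus I$ with colour $r$, and then count $2|I_j|=n/r$ edges of each colour in any Hamilton cycle. The only difference is notation.
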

\begin{proof}
Let $A\subset V(G)$ be an independent set with $|A|=(r-1)n/2r$, and set $B=V(G)\setminus A$. Let's take an equal partition  
$$A=\bigcup_{1\leq i\leq r-1} A_i$$ such that $|A_i|=|A|/(r-1)=n/2r$ for each $1\leq i\leq r-1$. Assign colour $i$ to each edge that is incident to a vertex of $A_i$, and colour $r$ to all edges in $G[B]$. 

If there exists a Hamilton cycle $H\subset G$, then $H$ has exactly $n/r$ edges of each colour. Indeed $H$ has exactly $2|A_i|=2\cdot n/2r=n/r$ edges incident to $A_i$ for each $1\leq i\leq r-1$, hence it contains $n-(r-1)\cdot n/r=n/r$ edges of colour $r$ in $G[B]$, as required. 
\end{proof}

\Cref{lem:upper(0<alpha<3/4)} follows easily from \Cref{lem:uppergeneral}, as shown below.

\begin{proof}[Proof of \Cref{lem:upper(0<alpha<3/4)}]
Let $G$ be the graph with vertex set $V(G)=A\cup B$ and edge set
$$E(G)=\{xy : x\in B,\ y\in V(G)\}.$$
where $A$ and $B$ are disjoint sets with $|A|=(1-\alpha)n$ and $|B|=\alpha n$. Note that $G\in\G_\alpha(n)$, and let $F$ be a graph with $V(F)=V(G)$ and $e(F)=m$. Then $$\alpha(G \cup F) \ge \alpha(G) - e(F) = (1-\alpha) n - m \ge \frac{r-1}{2r} \cdot n$$
where the last inequality holds since $\alpha>(r-1)/2r$, $m=o(n)$ and $n$ is sufficiently large. Therefore by \Cref{lem:uppergeneral}, there exists an $r$-colouring $\chi$ of $E(G_\alpha\cup F)$ such that every Hamilton cycle in $\chi$ has the same number of edges of each colour.
\end{proof}

We now use a similar construction to show that in the critical case $ \alpha=(1+r)/2r $, there exists a graph $ G_\alpha \in \G_\alpha(n) $ and an $ r $-edge-colouring of $ G_\alpha\cup G(n, m) $ such that every Hamilton cycle has colour bias at most $ O(m) $. The following lemma shows that the bound on $m$ in \Cref{thm:alpha=3/4'} is tight up to a constant factor.

\begin{lemma}\label{lem:upperbound (1+r)/2r}
Let $ r \geq 2 $ and $ 0 < \alpha < {(1 + r)}/{2r} $, and let $0\leq m< n/r$. For all sufficiently large $ n $ with $2r\, |\, n$, there exists a graph $ G_\alpha \in \G_\alpha(n) $ such that the following holds. For every graph $F$ with $V(F)=V(G_\alpha)$ and $e(F)=m$, there exists an $ r $-edge-colouring of $ G_\alpha \cup F $ such that every Hamilton cycle has at most $2(r-1)m$ colour bias.
\end{lemma}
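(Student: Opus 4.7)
The plan is to extend the construction from Lemma 2.2 to handle the critical minimum degree regime while allowing an arbitrary edge set $F$ of size $m$. I will take $V(G_\alpha) = A \sqcup B$ with $|A| = (r-1)n/(2r)$ and $|B| = (r+1)n/(2r)$, partition $A$ into equal classes $A_1, \ldots, A_{r-1}$ each of size $n/(2r)$ (all integral since $2r \mid n$), and let $G_\alpha$ consist of every edge between $A$ and $B$ together with every edge inside $B$. Then $\delta(G_\alpha) = |B| = (r+1)n/(2r) \geq \alpha n$, so $G_\alpha \in \Gamma_\alpha(n)$.

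Next, given any $F$ with $V(F) = V(G_\alpha)$ and $e(F) = m$, I would define a colouring of $G_\alpha \cup F$ by assigning colour $i$ to every $A_i$-$B$ edge, colour $r$ to every edge inside $B$, and colour $r$ to every new edge in $E(F) \setminus E(G_\alpha)$. The key observation behind this choice is that every non-edge of $G_\alpha$ lies inside $A$, so every new edge contributed by $F$ lies inside $A$; absorbing all of them into colour $r$ turns out to be optimal.

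The heart of the argument is a short degree-counting computation for an arbitrary Hamilton cycle $H$ in $G_\alpha \cup F$. Denote by $p$, $q$, $s$ the numbers of $H$-edges inside $A$, inside $B$, and between $A$ and $B$, respectively. The identities $p + q + s = n$ and $2|A| = 2p + s$ give $q = n/r + p$. Summing vertex degrees within $A_i$ yields $n/r = 2|A_i| = s_i + \pi_i$, where $s_i$ is the number of $A_i$-$B$ edges of $H$ (all coloured $i$) and $\pi_i$ counts the endpoints in $A_i$ of inside-$A$ edges of $H$, so $\sum_i \pi_i = 2p$. Hence colour $i$ appears $s_i = n/r - \pi_i \leq n/r$ times in $H$ for each $i < r$, whereas colour $r$ appears $q + p = n/r + 2p$ times. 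The colour bias of $H$ is therefore exactly $2p$, and since $p \leq e(F) = m$ and $r \geq 2$, this is at most $2m \leq 2(r-1)m$.

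There is no substantive obstacle here: the construction is a direct extension of Lemma 2.2, and the counting identities are routine. The slack factor of $r-1$ in the stated bound arises because the argument actually delivers the sharper estimate $2m$; I would still phrase the conclusion as $2(r-1)m$ so that it immediately matches the form needed to witness tightness in Theorem 1.5.
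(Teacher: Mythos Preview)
Your proof is correct and follows the same construction as the paper: the same graph $G_\alpha$ with $A$ independent and $B$ spanning a clique, the same partition of $A$ into $r-1$ equal classes, and the same degree-counting in each $A_i$. The one substantive difference is that you assign colour $r$ to the new inside-$A$ edges, whereas the paper assigns them the class colour of (one of) their endpoints; your choice yields the sharper bound $2m$ in place of the paper's $2(r-1)m$ and also avoids the minor slip in the paper where $|A|=(1-\alpha)n$ does not match the partition into $r-1$ blocks of size $n/(2r)$ when $\alpha$ is strictly below $(r+1)/(2r)$.
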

\begin{proof}
Let $G$ be the graph with vertex set $V(G)=A\cup B$ and edge set
$$E(G)=\{xy : x\in B,\ y\in V(G)\}.$$
where $A$ and $B$ are disjoint sets with $|A|=(1-\alpha)n$ and $|B|=\alpha n$. Note that $G\in\G_\alpha(n)$, and let $F$ be a graph with $V(F)=V(G)$ and $e(F)=m$. Let's partition $A=\bigcup_{1\leq i\leq r-1} A_i$ such that $|A_i|=n/2r$ for each $1\leq i\leq r-1$. Assign colour $i$ to each edge that is incident to a vertex of $A_i$ for $1\leq i\leq r-1$, and $r$ to the edges of $(G\cup F)[B]$.

Now let $H\subset G\cup F$ be a Hamilton cycle. Note that each edge of $F$ used in $H$ can cause the number of colour $i$ edges to decrease by at most 2, hence $H$ contains at least
$$2|A_i|-2e(F)\geq n/r-2m$$
edges of colour $i$ for each $1\leq i\leq r-1$, and thus at most 
$$n-(r-1)\cdot(n/r-2m)\leq n/r+2(r-1)m$$
edges of the colour $r$. Therefore the colour bias of $H$ is at most $2(r-1)m$ as required.
\end{proof}

Note that \Cref{lem:upperbound (1+r)/2r} proves the latter part of \Cref{thm:alpha=3/4}. We will complete the proof of \Cref{thm:alpha=3/4} by proving the remaining part in \Cref{section 4}.

\medskip

\section{$O(n)$ random edges makes $\Omega(n)$ colour bias}\label{section 3}

In this section, we will prove that adding $O(n)$ random edges to any $G_\alpha\in\G_{\alpha}(n)$ is sufficient to ensure the existence of a Hamilton cycle with $\Omega(n)$ colour bias in any $r$-colouring. To be precise, we will prove the following theorem.

\begin{theorem}\label{thm:0<alpha<3/4'}
For every $\alpha>0$ and $r\geq2$, there exists $C=C(\alpha,r)>0$, such that the following holds. If $G_\alpha\in\G_\alpha(n)$ and $p\geq C/n$, then with high probability every $r$-colouring of $E\big(G_\alpha\cup G(n,p)\big)$ contains a Hamilton cycle with at least $n/2r$ colour-bias. 
\end{theorem}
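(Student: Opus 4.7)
The plan is to adapt the absorption-based proof of \Cref{thm:BFM} so that the Hamilton cycle constructed in $G_\alpha \cup G(n,p)$ can be forced to have colour bias at least $n/(2r)$. Partition $G(n,p)$ into three independent copies $R_1$, $R_2$, $R_3$, each with density $p/3 \geq C/(3n)$, reserved respectively for (i) an absorbing path, (ii) a monochromatic skeleton, and (iii) the final connection. Given any $r$-edge-colouring of $G_\alpha \cup G(n,p)$, pigeonhole applied to the $\geq \alpha n^2/2$ edges of $G_\alpha$ produces a colour $c^*$ carrying at least $\alpha n^2/(2r)$ edges in $G_\alpha$; the task reduces to building a Hamilton cycle $H$ with $|E_{c^*}(H)| \geq 3n/(2r)$, which is exactly colour bias $\geq n/(2r)$.

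First, build an absorbing path $P_A$ of length $o(n)$ using $G_\alpha$ and $R_1$, with the standard property that any $o(n)$-sized set of leftover vertices can be absorbed into $P_A$. This follows well-known techniques in the randomly perturbed setting: $\delta(G_\alpha) \geq \alpha n$ supplies many candidate absorber gadgets for each vertex, and $R_1$ with $p \geq C/n$ provides the random connectivity needed to link them into a single path.

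Next, build a monochromatic linear forest $F$ in colour $c^*$, disjoint from $V(P_A)$, with $|E(F)| \geq 3n/(2r)$. The Erd\H{o}s--Gallai bound on the $c^*$-subgraph of $G_\alpha$ alone yields only a monochromatic matching of size $\Omega(\alpha n/r)$, which is too small when $\alpha < 3$, so we use a second application of pigeonhole: for every vertex $v$ some colour accounts for at least $\alpha n/r$ of its $G_\alpha$-edges, so at least $n/r$ vertices share the same majority colour (which, after possibly relabelling, we can align with $c^*$ by relating this majority colour to the dominant colour produced in the first pigeonhole). Around such vertices, iteratively extract vertex-disjoint $c^*$-fragments (short paths or small stars), and chain them together using $O(1)$ random edges from $R_2$ per fragment as connectors. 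A union bound shows that with high probability $R_2$ supplies all needed connectors, so $F$ can be grown until $|E(F)| \geq 3n/(2r)$.

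Finally, use $G_\alpha$-edges and $R_3$ to extend $F \cup P_A$ to a Hamilton cycle $H$, absorbing any remaining vertices via the absorption property of $P_A$. Since $F \subseteq H$, we obtain $|E_{c^*}(H)| \geq |E(F)| \geq 3n/(2r)$, giving the desired bias. The main obstacle is the third step, the construction of $F$: when $\alpha$ is small, the $c^*$-subgraph of $G_\alpha$ is too sparse to contain $3n/(2r)$ monochromatic edges in a single contiguous structure, so random edges must be used as glue between $c^*$-fragments in a way that preserves the ability to extend the union to a Hamilton cycle, while keeping the total random-edge consumption bounded by $O(n)$ (which requires only $p \geq C/n$ for $C$ sufficiently large in $\alpha$ and $r$).
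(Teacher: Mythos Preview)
Your overall three-round architecture (absorber, monochromatic skeleton, connect and absorb) matches the paper, but the heart of the argument---Step~2, the construction of the monochromatic skeleton $F$---has a genuine gap that cannot be repaired along the lines you sketch.

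The problem is that for small $\alpha$ no colour class of $G_\alpha$ need contain a linear forest with anywhere near $3n/(2r)$ edges, and gluing with random edges does not help because the glue edges are not of colour $c^*$. Concretely, take $G_\alpha$ to be complete bipartite with parts $A,B$ of sizes $\alpha n$ and $(1-\alpha)n$, partition $A$ into $A_1,\dots,A_r$ of size $\alpha n/r$ each, and colour every edge touching $A_i$ with colour $i$. Then for every colour $c$, each $c$-edge meets the independent set $A_c$, so any path forest in colour $c$ has at most $2|A_c|=2\alpha n/r$ edges. For $\alpha$ small this is far below $3n/(2r)$; your two pigeonhole steps do not escape this, and ``extracting $c^*$-fragments around majority-colour vertices'' still produces a vertex-disjoint union of $c^*$-paths, hence is bounded by the same $2\alpha n/r$. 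The aside about aligning the vertex-majority colour with the global-majority colour is also unjustified.

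The paper sidesteps this entirely: the monochromatic skeleton is found \emph{inside the random graph}, not in $G_\alpha$. The key external input is \Cref{lem:monochromaticpath} (Gishboliner--Krivelevich--Michaeli), which says that for $p\ge C/n$, in \emph{any} $r$-colouring of $G(n,p)$ one can find a path of length $(2/(r+1)-\varepsilon)n$ that is monochromatic up to $O(1)$ edges. Since $2/(r+1)>1/r$, this single random path already carries $n/r+\Omega(n)$ edges of one colour. The rest of the proof then covers the remaining vertices by a second long path in the random graph (\Cref{lem:longpath}), connects everything with a third sprinkle, and absorbs the $o(n)$ leftovers. So the dense graph $G_\alpha$ is used only for the absorber, never for the colour bias; this is precisely the idea your proposal is missing.
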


Our proof employs an absorption method introduced in \cite{RRS} and sprinkling. Let $R_i\sim G(n,p_i)$ (for $1\leq i\leq3$) be three independent random graphs on $V(G_\alpha)$, then $R_1\cup R_2\cup R_3\subset G(n,p)$ where $p= p_1 + p_2 + p_3$, which will be our final random graph. We begin by constructing a path $A$ of length $o(n)$ in $G_\alpha\cup R_1$, such that for any 
$o(n)$ vertices outside $A$, we can use 
$A$ to absorb them - meaning we can extend $A$ into a new path that incorporates all these vertices. Then in $R_2$, we will use some lemmas from \cite{GKM} and \cite{KLS} to find two paths $P_1$ and $P_2$ of length $\Omega(n)$, where $P_1$ is almost monochromatic and the union $P_1\cup P_2$ covers all but $o(n)$ vertices (see \Cref{lem:monochromaticpath,lem:longpath}). We then use $R_3$ to connect the absorber path $A$ and the paths $P_1$ and $P_2$ to form an almost spanning cycle. Finally, we will use $A$ to absorb the remaining vertices to extend the cycle to a Hamilton cycle $H$. The colour bias of $H$ comes from $P_1$, which contains at least $n/r+\Omega(n)$ edges of the same colour, and therefore so does $H$. In the remainder of this section, we will construct two long paths in \Cref{subsec:2path}, prove an absorber lemma in \Cref{subsec:absorber}, and finally gather all the pieces to prove \Cref{thm:0<alpha<3/4'} in \Cref{subsec:0<alpha<3/4'}.

\subsection{Two long paths}\label{subsec:2path}
An important part of our proof is to find two vertex-disjoint paths $P_1,P_2\subset G(n,p)$ of length $\Omega(n)$. Specifically we denote
$$P=(x_1,x_2,...,x_k)$$ 
to be a path $P$ with $V(P)=\{x_1,x_2,...,x_k\}$ and $E(P)=\{x_1x_2,x_2x_3,...,x_{k-1}x_k\}$. First, we will use the following lemma from \cite{GKM} to find an almost monochromatic path $P_1\subset G(n,p)$.

\begin{lemma}[Gishboliner, Krivelevich and Michaeli \cite{GKM}]\label{lem:monochromaticpath}
Let $\varepsilon>0$ and $r\geq 2$. There exists $K=K(\varepsilon,r),C=C(\varepsilon,r)>0$, such that if $p\geq C/n$, the random graph $G(n,p)$ is with high probability such that in any $r$-colouring, there exists a path of length at least $\big(2/(r+1)-\varepsilon\big) n$ in which all but at
most $K$ of the edges are of the same colour.  
\end{lemma}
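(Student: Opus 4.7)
The plan is to adapt the depth-first search (DFS) argument for long paths in $G(n,p)$, in the spirit of Krivelevich, to handle an $r$-coloured random graph; the excerpt itself hints at this approach by noting that Gishboliner, Krivelevich and Michaeli prove the lemma via a ``clever DFS algorithm''. I would run a DFS biased towards a single colour class and read off the nearly-monochromatic path from the DFS stack at a critical moment.

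Pigeonhole on any $r$-colouring of $E(G(n,p))$ selects a colour $c^*$ used on at least a $1/r$-fraction of the edges. Run a DFS on $G(n,p)$ with the usual partition $V=S\cup T\cup U$ into processed, active-stack, and unseen vertices, revealing each edge indicator (together with its colour) only at the moment the algorithm first queries the pair. From the top of the stack $v$, first try to push an unseen neighbour $u\in U$ via a $c^*$-edge; if none exists, optionally traverse a single non-$c^*$ edge into $U$ while a global budget $K=K(\varepsilon,r)$ is still positive. Once the budget is exhausted the algorithm behaves as a pure $c^*$-DFS that pops $v$ when no $c^*$-edge from $v$ to $U$ remains.

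The standard DFS invariant then asserts that at every moment each pair in $S\times U$ has been exposed and rejected by the rule, i.e.\ is either not an edge of $G(n,p)$ or is an edge of a colour other than $c^*$ (aside from at most $K$ non-$c^*$ edges ever pushed onto the stack). Chernoff's inequality controls the number of $c^*$-edges expected across the $s\times s$ bipartite cut at the first moment when $|S|=|U|=s$, and together with an extremal accounting of the adversary's colour placements---whose underlying structure is governed by the combinatorics of monochromatic paths in $r$-coloured complete graphs (e.g.\ Gerencs\'{e}r--Gyárfás for $r=2$)---one obtains $|T|\geq (2/(r+1)-\varepsilon)n$ at this moment. The constant $2/(r+1)$ reflects the sharp extremal ratio achievable by adversarial colourings in this setting, and the $\varepsilon$ slack absorbs the $1/C$ loss from the random-graph concentration step.

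The main obstacle is that the adversary's colouring depends on the full realisation of $G(n,p)$, so $G_{c^*}$ is \emph{not} a random subgraph with independent edges, and standard concentration on $G_{c^*}$ has to be applied with care. The edge-exposure DFS formulation circumvents this: each decision uses only previously exposed data, and the unexposed pairs retain their Bernoulli$(p)$ distribution conditional on the history. A union bound over the (at most) $r$ possible choices of majority colour $c^*$, together with a concentration argument ensuring that the DFS stack passes through the threshold $(2/(r+1)-\varepsilon)n$ within a short window of the algorithm's clock, completes the argument with high probability.
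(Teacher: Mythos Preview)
The paper does not supply its own proof of this lemma: it is quoted verbatim from~\cite{GKM} and used as a black box. So there is no in-paper argument to compare against; I can only comment on the internal coherence of your sketch.

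Your edge-exposure formulation has a genuine gap. You want to reveal each pair $\{u,v\}$ together with its colour at the moment the DFS queries it, and then claim that ``the unexposed pairs retain their Bernoulli$(p)$ distribution conditional on the history''. But the adversary's colouring is a function of the \emph{entire} realisation of $G(n,p)$. Once the history records the colour of even a single exposed edge, you have conditioned on information that depends on all edge indicators simultaneously, and the remaining unexposed indicators are no longer independent Bernoulli$(p)$. Concretely, an adversary could encode a single bit about the parity of $e(G)$ into the colour of the very first edge you expose; conditioning on that colour already destroys independence. So the martingale/exposure argument, which works cleanly in the uncoloured Krivelevich--Lee--Sudakov setting, does not transfer as written.

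The way~\cite{GKM} actually proceed (and the structure your sketch is groping towards) is to decouple the randomness from the colouring: first show that with high probability $G(n,p)$ satisfies a deterministic pseudorandom edge-distribution property, and then prove that \emph{any} $r$-colouring of a graph with that property admits the desired nearly-monochromatic path via the colour-biased DFS. In that second step the graph is fixed, the DFS runs deterministically, and the $2/(r+1)$ ratio drops out of an extremal accounting on the sets $S$, $T$, $U$. Your pigeonhole choice of $c^*$ and the union bound over colours are fine as outer wrappers, but the middle of the argument needs to be restructured so that no probabilistic claim is made after the colouring has been fixed.
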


Next, we will find the second path $P_2$ that is vertex-disjoint with $P_1$, such that the union $P_1\cup P_2$ covers all but $\varepsilon n$ vertices.  

\begin{lemma}\label{lem:longpath}
Let $\beta>\varepsilon>0$, then there exists $C=C(\beta,\varepsilon)>0$ such that the following holds. If $p\geq C/n$, then with high probability every induced subgraph $F\subset G(n,p)$ with $v(F)=\beta n$ contains a path $P\subset F$, which has length at least $v(P)\geq(\beta-\varepsilon)n$.  
\end{lemma}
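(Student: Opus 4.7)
The plan is to combine a standard depth-first search (DFS) argument, in the spirit of \cite{GKM}, with a bipartite edge-density property of $G(n,p)$. First, I would verify the following property of $G(n,p)$: with high probability, every pair of disjoint vertex sets $A,B\subseteq V(G(n,p))$ with $|A|,|B|\geq \varepsilon n/3$ satisfies $e(A,B)\geq 1$. This follows from a routine union bound, since for fixed such $A,B$ one has $\Pr[e(A,B)=0]=(1-p)^{|A||B|}\leq \exp(-C\varepsilon^{2}n/9)$, and the number of such ordered pairs is at most $4^{n}$, so choosing $C=C(\beta,\varepsilon)$ sufficiently large makes the total failure probability $o(1)$.

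Conditioning on this event, fix any induced subgraph $F\subseteq G(n,p)$ with $v(F)=\beta n$. I would run DFS on $F$, maintaining a partition $V(F)=T\cup S\cup U$, where $T$ is the set of finished vertices, $S$ is the current DFS stack (which always forms a path in $F$), and $U$ is the set of unexplored vertices. The standard DFS invariant is that there are no $F$-edges between $T$ and $U$, since each vertex moved from $S$ to $T$ has had all its $F$-neighbours in the then-current $U$ queried and found absent. Throughout the process $|U|$ is non-increasing and $|T|$ is non-decreasing, each changing by at most one per step, so there is some moment $\tau$ at which $|T(\tau)|$ and $|U(\tau)|$ differ by at most one. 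Suppose for contradiction that the longest path built by DFS has fewer than $(\beta-\varepsilon)n$ vertices; then $|S(\tau)|<(\beta-\varepsilon)n$, forcing $|T(\tau)|,|U(\tau)|\geq \varepsilon n/2-1\geq \varepsilon n/3$ for $n$ large. Since $F$ is induced, the $F$-edges between $T(\tau)$ and $U(\tau)$ coincide with the $G(n,p)$-edges between them, violating the edge-existence property above. Hence $|S(\tau)|\geq(\beta-\varepsilon)n$, and $S(\tau)$ gives the desired path in $F$.

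The argument is essentially standard and no step poses a serious obstacle; the only point worth emphasising is the quantifier ordering. The edge-existence property is a property of $G(n,p)$ alone (union bound over at most $4^{n}$ pairs), and the ``for all induced $F$'' quantifier is absorbed for free: on any induced $F$, the DFS queries only $G(n,p)$-edges within $V(F)$, so the single random-graph event above suffices simultaneously for every choice of $F$.
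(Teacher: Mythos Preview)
Your proposal is correct and follows essentially the same approach as the paper. The paper proves that with high probability $G(n,p)$ is $k$-joined for $k=\varepsilon n/2$ (your edge-existence property with a slightly different threshold), then invokes a black-box lemma of Krivelevich--Lee--Sudakov stating that any $k$-joined graph on $m$ vertices contains a path on at least $m-2k$ vertices; you simply unpack that lemma by running the DFS argument explicitly, which is exactly how it is proved.
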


We need the following definition which is used in the proof of \Cref{lem:longpath}.

\begin{definition}\label{def:mjointed}
A graph $G$ is \emph{$k$-joined} if there is an edge of $G$ between any two   disjoint vertex sets of size $k$.    
\end{definition}

Krivelevich, Lee and Sudakov \cite{KLS} developed a powerful method based on the Depth First Search (DFS) algorithm, to find long paths in general graphs. The following lemma is Lemma 7 in \cite{M}, which is a special case of a result in \cite{KLS}.

\begin{lemma}\label{lem:mjointed}
 If $G$ is $k$-joined, then it has a path with at least $v(G)-2k$ vertices. 
\end{lemma}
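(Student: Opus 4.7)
The plan is to use the Depth First Search (DFS) argument of Krivelevich, Lee and Sudakov. I would run DFS on $G$, maintaining the three standard sets: a set $U$ of unvisited vertices, a stack $S$ of vertices on the current DFS branch, and a set $T$ of vertices whose exploration is complete. At each step, if the top vertex $v$ of $S$ has a neighbour in $U$, push one such neighbour onto $S$; otherwise pop $v$ off $S$ and add it to $T$. If $S$ becomes empty while $U\neq\emptyset$, restart the search by moving an arbitrary vertex of $U$ into $S$.

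Two elementary invariants of DFS drive the proof. First, the stack $S$ always spans a path in $G$, since each pushed vertex is adjacent to the previous top of $S$. Hence it suffices to exhibit a single moment during the DFS at which $|S|\ge v(G)-2k$. Second, at every stage of the DFS there are no edges between $T$ and $U$: a vertex is moved from $S$ to $T$ precisely when it has no neighbours in the current $U$, and $U$ only shrinks thereafter, so this property persists.

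To extract the long path, I would track the scalar quantity $|T|-|U|$. A push decreases $|U|$ by one and leaves $|T|$ unchanged, while a pop increases $|T|$ by one and leaves $|U|$ unchanged; treating each restart as a push, every step therefore raises $|T|-|U|$ by exactly one. Since this quantity starts at $-v(G)$ and ends at $+v(G)$, it attains the value $0$ at some moment $m_0$, at which $|T|=|U|=(v(G)-|S_{m_0}|)/2$. If $|S_{m_0}|<v(G)-2k$, then $|T|=|U|>k$, and choosing $k$-element subsets of $T$ and $U$ produces two disjoint vertex sets of size $k$ with no edges between them, contradicting the $k$-joined hypothesis. Therefore $|S_{m_0}|\ge v(G)-2k$, and the stack at moment $m_0$ supplies the required path.

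There is no real obstacle in this plan. The only minor points requiring care are to account for DFS restarts as pushes in the $|T|-|U|$ bookkeeping (so that its monotonicity is uniform across all steps) and to verify the trivial parity detail ensuring that $|T|-|U|$ actually attains $0$: both the start and end values share the parity of $v(G)$, and the quantity changes by exactly one per step, so every intermediate integer, including $0$, is hit.
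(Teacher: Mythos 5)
Your proof is correct. The paper does not prove this lemma itself --- it cites it as Lemma 7 of Montgomery's notes, a special case of the Krivelevich--Lee--Sudakov DFS result --- and your argument is exactly that standard DFS proof: the stack spans a path, there are never edges between $T$ and $U$, and tracking $|T|-|U|$ (which increases by exactly one per step once restarts are counted as pushes) yields a moment with $|T|=|U|$, at which $k$-joinedness forces $|S|\ge v(G)-2k$.
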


Now we can use the $k$-joined property to show \Cref{lem:longpath}.

\begin{proof}[Proof of \Cref{lem:longpath}]
Let $\beta>\varepsilon>0$, $k=\varepsilon n/2$ and $R\sim G(n,p)$ with $p=C/n$ for some sufficiently large constant $C$. Given a set $U\subset V(R)$ of size $|U|=\beta n$, the probability that $R[U]$ is not $k$-joined is at most 
\begin{align}\label{eq:jointed}
\binom{\beta n}{k}^2(1-p)^{k^2}\leq \bigg(\frac{e\beta n}{k}\bigg)^{2k}\cdot e^{-pk^2}\leq \bigg(\frac{2e\beta}{\varepsilon}\bigg)^{\varepsilon n}\cdot e^{-C\varepsilon^2 n/4}\leq 2^{-n},
\end{align}
since $C$ is sufficiently large. By the union bound over all sets $U$ of size $\beta n$, it follows that with high probability the induced subgraph $R[U]$ is $k$-joined for every $U\subset V(R)$ with $|U|=\beta n$, thus by \Cref{lem:mjointed} there exists a path of length at least $(\beta-\varepsilon)n$ in $R[U]$, as required.
\end{proof}

\medskip

\subsection{An absorber lemma}\label{subsec:absorber}

We now formally state the absorber lemma as follows.

\begin{lemma}\label{lem:absorber}
Let $\alpha>0$ and $0<\varepsilon<2^{-14}\alpha^2$. There is 
$C=C(\alpha,\varepsilon)>0$ such that the following holds. If $G_\alpha\in\G_\alpha(n)$ and $R\sim G(n,p)$ with $p=C/n$ and $V(R)=V(G_\alpha)$, then with high probability there exists a path $A\subset G_\alpha\cup R$ with $\varepsilon n/8\leq v(A)\leq4\varepsilon n$ with the following two properties:
\begin{itemize}
    \item [(i)] for every $x\in V(R)$. 
    \begin{align}\label{eq:aaaa1}
e\big(R[N_{G_{\alpha}}(x)]\cap A\big)\geq \varepsilon^2 n;   
\end{align}

\item [(ii)] for every $U\subset V(G_\alpha)\setminus V(A)$ with $|U|\leq \varepsilon^2 n/2$ and every path $A'\subset A$ of length at least $v(A)-2\varepsilon^3 n$, there exists a path $A'_U\subset G_\alpha\cup R$ such that $V(A'_U)=V(A')\cup U$. 
\end{itemize}
\end{lemma}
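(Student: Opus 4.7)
The plan is to use sprinkling: split $R$ into two independent random subgraphs $R = R_1 \cup R_2$ with $R_1, R_2 \sim G(n, p/2)$, and construct $A$ in two stages. In Step~1, find a matching $M \subseteq E(R_1)$ of size $\Theta(\varepsilon n)$ such that, for every $v \in V(G_\alpha)$, $M$ contains at least $\varepsilon^2 n$ edges within $N_{G_\alpha}(v)$. In Step~2, arrange the matching edges into a path $A$ using short connectors drawn from $G_\alpha$ and $R_2$. The key point is that the matching edges of $M$ will become edges of $A$ and are themselves $R$-edges, so condition~(i) follows immediately; moreover, each matching edge $\{a,b\} \in M$ with $a, b \in N_{G_\alpha}(v)$ serves as an absorber for $v$, since replacing $\{a,b\}$ in $A$ by the length-two path $(a, v, b)$ (using $G_\alpha$-edges $av$ and $vb$) extends the path to include $v$, which yields condition~(ii) by greedy absorption.

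Step~1 is the main step. By Chernoff and a union bound, for $C$ sufficiently large (depending on $\alpha$), we have $e(R_1[N_{G_\alpha}(v)]) \geq \alpha^2 C n / 8$ for every $v$ with high probability. Sample each edge of $R_1$ independently with probability $q = \Theta(\varepsilon/C)$ to obtain $M_0 \subseteq E(R_1)$; whp $|M_0| = \Theta(\varepsilon n)$ and, by Chernoff plus a union bound, $|M_0 \cap E(R_1[N_{G_\alpha}(v)])| \geq 10 \varepsilon^2 n$ for every $v$ (the hypothesis $\varepsilon < 2^{-14} \alpha^2$ leaves ample room to absorb constants). The expected number of pairs of edges of $M_0$ sharing a vertex is $O(q^2 C^2 n) = O(\varepsilon^2 n)$, so by Markov's inequality this count is at most $\varepsilon^2 n$ whp; deleting one edge from each such pair yields a matching $M$ with $|M| \geq \varepsilon n / 4$ and $|M \cap E(R_1[N_{G_\alpha}(v)])| \geq 9 \varepsilon^2 n$ for every $v$. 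Balancing the size of $M$ against the per-vertex count via the choice of $q$ is the most delicate part of the argument.

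For Step~2, write $M = \{a_i b_i : 1 \leq i \leq k\}$ with $k = |M|$ and build $A = a_1 b_1 c_1 d_1 a_2 b_2 \cdots c_{k-1} d_{k-1} a_k b_k$, where each length-three connector $b_i c_i d_i a_{i+1}$ uses $G_\alpha$-edges $b_i c_i$ and $d_i a_{i+1}$ together with an $R_2$-edge $c_i d_i$. Only $v(A) = 4k - 2 \leq \varepsilon n$ vertices are ever used, so at every step both $N_{G_\alpha}(b_i)$ and $N_{G_\alpha}(a_{i+1})$ retain $\geq \alpha n / 2$ unused vertices; a standard union bound shows that whp $R_2$ has $\Omega(Cn)$ edges between any two disjoint vertex sets of size $\geq \alpha n / 2$, so a valid connector can always be chosen. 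The resulting path $A$ has $v(A) \in [\varepsilon n / 8, 4 \varepsilon n]$, and condition~(i) holds since $M \subseteq E(A) \cap E(R_1) \subseteq E(A) \cap E(R)$. For condition~(ii), any subpath $A' \subseteq A$ with $v(A) - v(A') \leq 2 \varepsilon^3 n$ still contains at least $9 \varepsilon^2 n - 2 \varepsilon^3 n \geq 8 \varepsilon^2 n$ matching edges within $N_{G_\alpha}(v)$ for every $v$; processing $U$ one vertex at a time (with $|U| \leq \varepsilon^2 n / 2$), at each step fewer than $\varepsilon^2 n / 2 < 8 \varepsilon^2 n$ absorber edges for $v$ have been used, so a fresh absorber remains and absorbing $v$ via $(a, v, b)$ extends the path.
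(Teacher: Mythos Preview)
Your approach is essentially the same as the paper's: both build a matching $M$ of size $\Theta(\varepsilon n)$ from random edges with $\Omega(\varepsilon^2 n)$ edges inside each neighbourhood $N_{G_\alpha}(v)$, thread the matching edges into a path via short connectors, and then absorb leftover vertices one at a time through matching edges. The paper takes $R_1\sim G(n,\varepsilon/2n)$ directly and lets $M$ be a maximal matching in $R_1$; your two-stage construction (sample $R_1\sim G(n,C/2n)$, then subsample each edge with probability $q=\Theta(\varepsilon/C)$) is distributionally identical, since the subsampled graph is just $G(n,\Theta(\varepsilon/n))$. Your use of uniform length-$3$ connectors in place of the paper's length-$2$/length-$3$ case split is a harmless simplification.

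There is, however, a genuine technical gap in Step~1. You compute that the expected number of pairs of $M_0$-edges sharing a vertex is $O(q^2C^2 n)=O(\varepsilon^2 n)$ and then assert ``by Markov's inequality this count is at most $\varepsilon^2 n$ whp''. Markov only gives $\Pr[X\ge \varepsilon^2 n]\le \E[X]/(\varepsilon^2 n)=O(1)$, which is not $o(1)$. Since you must delete one edge from each conflicting pair and still retain at least $9\varepsilon^2 n$ matching edges in every neighbourhood, you cannot tolerate losing an unbounded constant multiple of $\varepsilon^2 n$ edges. The paper faces exactly this issue (phrased as bounding $Y(R)$, the number of non-isolated edges in a sparse random graph) and resolves it via the Kim--Vu polynomial concentration inequality, obtaining $Y(R)\le 2c^2 n$ with probability $1-e^{-\Omega(n^{1/4})}$. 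Applying the same inequality to your subsampled graph $M_0\sim G(n,\Theta(\varepsilon/n))$ fixes the argument; Markov alone does not.
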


Our main aim in this subsection is to prove \Cref{lem:absorber}. The idea to construct the absorber is to find a small matching $M \subset R$ first, which nevertheless contains \emph{many} edges of $R[N_{G_{\alpha}}(x)]$ for \emph{every} vertex $x \in V(G_\alpha)$. The absorber $A$ will be a path formed by adding paths of length two or three between consecutive pairs of edges of $M$: we will choose a path of length two in $G$ if the two vertices has $\alpha n/2$ common neighbors in $G$, and otherwise a path of length 3 of the form $e_Ge_Re_G$ (where $e_G\in E(G)$ and $e_R\in E(R)$). We need some quasi-random properties of $R$. Given a graph $G$, let's define $Y(G)$ to be the number of non-isolated edges of $G$, that is
$$Y(G):=\big\{xy\in E(G) : \exists z\in V(G)\ \textup{such\ that}\ yz\in E(G)\big\}.$$

\begin{lemma}\label{lem:properties1}
Let $c>0$ be a constant, and let $R\sim G(n,p)$ with $p=c/n$.
Then
\begin{center}
$cn/4\leq e(R)\leq cn\qquad\textup{and}\qquad Y(R)\leq2p^2n^3\leq 2c^2n$ 
\end{center}
holds with probability at least $1-e^{-\Omega(n^{1/4})}$. Moreover for every $\delta>0$ with $2e/\delta<e^{c\delta/2}$, 
$$R~\ \textup{is}~\ \delta n\textup{-joined}$$
with probability at least $1-e^{-\Omega(n)}$.
\end{lemma}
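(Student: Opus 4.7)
The lemma splits into three independent probabilistic estimates; my plan is to prove each with its own standard tool.

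For the edge-count bound, $e(R)\sim\mathrm{Bin}(\binom{n}{2},c/n)$ has mean $(1+o(1))cn/2$, so a one-sided Chernoff bound immediately yields $cn/4\leq e(R)\leq cn$ with probability $1-e^{-\Omega(n)}$, well beyond the required $1-e^{-\Omega(n^{1/4})}$.

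For the bound on $Y(R)$, I would reduce to the $2$-path count $P_{2}(R):=\sum_{v}\binom{d_{R}(v)}{2}$. Every non-isolated edge $xy\in E(R)$, witnessed by some $z\neq x$ with $yz\in E(R)$, contributes an ordered pair $(xy,yz)$ of incident edges, so $Y(R)\leq 2P_{2}(R)$; since $\E[P_{2}(R)]=n\binom{n-1}{2}p^{2}\leq p^{2}n^{3}/2$, it suffices to show $P_{2}(R)\leq p^{2}n^{3}$ with the required failure probability. This is the main obstacle: Markov's and Chebyshev's inequalities both fall short of the stretched-exponential rate $e^{-\Omega(n^{1/4})}$. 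However, $P_{2}(R)$ is a degree-two polynomial in the independent edge indicators $\{X_{e}\}_{e}$ whose partial derivatives have expectation $\E[\partial_{e}P_{2}(R)]=O(np)=O(c)$, so polynomial concentration inequalities (e.g.\ Kim--Vu, or hypercontractive $L^{k}$ bounds combined with Markov) give $P_{2}(R)\leq 2\E[P_{2}(R)]$ with probability $1-e^{-\Omega(\sqrt{n})}$, comfortably beating the stated rate.

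For the $\delta n$-joined property, the plan is a union bound over ordered pairs of disjoint $\delta n$-sets. For a fixed pair $A,B\subset V(R)$ with $|A|=|B|=\delta n$, the probability that $R$ contains no $A$--$B$ edge is at most $(1-p)^{|A||B|}\leq e^{-c\delta^{2}n}$, and the number of such ordered pairs is bounded by $\binom{n}{\delta n}^{2}\leq (e/\delta)^{2\delta n}$. The failure probability is therefore at most
\[
\exp\!\bigl(2\delta n\log(e/\delta)-c\delta^{2}n\bigr)=\exp\!\bigl(-\delta n(c\delta-2\log(e/\delta))\bigr),
\]
and the hypothesis $2e/\delta<e^{c\delta/2}$ rewrites as $c\delta-2\log(e/\delta)>2\log 2>0$, giving the claimed $e^{-\Omega(n)}$ rate.
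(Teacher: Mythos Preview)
Your approach is essentially identical to the paper's: Chernoff for $e(R)$, the reduction $Y(R)\le 2P_{2}(R)$ followed by Kim--Vu polynomial concentration for the $P_{2}$-count, and a union bound over pairs of $\delta n$-sets for the joined property. One minor point: Kim--Vu as usually stated yields only $e^{-\Omega(n^{1/4})}$ here (with $\lambda\asymp c^{2}n$ and $\E_{1}\asymp c$, the deviation $c_{2}\ell^{2}\sqrt{\lambda\E_{1}}$ matches $\lambda$ at $\ell\asymp n^{1/4}$, not $\ell\asymp n^{1/2}$), so your claimed $e^{-\Omega(\sqrt{n})}$ is an overstatement for that particular tool---but this is exactly the rate the lemma asks for, so the argument goes through unchanged.
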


We also need some quasi-random properties of $G_{\alpha}\cup R$.

\begin{lemma}\label{lem:properties2} 
Let $0<\alpha\leq 1$ and $0<c\leq 2^{-8}$ be constants, let $G_\alpha\in \G_\alpha(n)$ and $R\sim G(n,p)$ with $p=c/n$ and $V(G_\alpha)=V(R)$. Then with high probability $G_\alpha\cup R$ has the following properties:

\begin{itemize}
\item[$(i)$] For every $x\in V(R)$,
$$e\big(R[N_{G_\alpha}(x)]\big)\geq \alpha^2cn/4.$$

\item[$(ii)$] For every $x,y\in V(R)$ with 
$|N_{G_\alpha}(x)\setminus N_{G_\alpha}(y)|\geq \alpha n/2$ and $|N_{G_\alpha}(y)\setminus N_{G_\alpha}(x)|\geq \alpha n/2,$ there exists a matching $M_{x,y}\subset R$ of size
$$e(M_{x,y})\geq 2^{-4}c\alpha^2 n$$
between $N_{G_\alpha}(x)\setminus N_{G_\alpha}(y)$ and $N_{G_\alpha}(y)\setminus N_{G_\alpha}(x)$.
\end{itemize}
\end{lemma}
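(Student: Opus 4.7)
The plan is to prove both parts by Chernoff-style concentration on the random graph $R$, followed by a union bound. For part~(i), I fix $x\in V(G_\alpha)$ and observe that $S_x:=N_{G_\alpha}(x)$ is determined by $G_\alpha$ alone, so $e(R[S_x])$ is a binomial random variable on at least $\binom{\alpha n}{2}$ trials with success probability $p=c/n$, and hence has mean $(1-o(1))\alpha^2 cn/2$. A standard Chernoff bound then gives $\Pr[e(R[S_x])\leq \alpha^2 cn/4]\leq e^{-\Omega(n)}$, and an $n$-fold union bound over $x$ finishes part~(i) with high probability.

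For part~(ii), I fix a pair $(x,y)$ satisfying the hypothesis and set $A=N_{G_\alpha}(x)\setminus N_{G_\alpha}(y)$ and $B=N_{G_\alpha}(y)\setminus N_{G_\alpha}(x)$, both of size at least $\alpha n/2$. The task is to show that the random bipartite graph $R[A,B]$ contains a matching of size at least $2^{-4}c\alpha^2 n$ with probability $1-e^{-\Omega(n)}$; a union bound over the at most $n^2$ candidate pairs $(x,y)$ will then deliver~(ii).

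To build such a matching I analyse a one-sided greedy process: enumerate $A=\{a_1,a_2,\dots\}$, and at step $i$ expose the $R$-edges incident to $a_i$ and extend the current partial matching $M_{i-1}$ by an edge from $a_i$ to $B\setminus V(M_{i-1})$ whenever one is present. As long as $|M_{i-1}|<2^{-4}c\alpha^2 n$, the $B$-side has at least $(1-o(1))\alpha n/2$ unmatched vertices, so the conditional probability of extending the matching at step $i$ is at least
$$1-(1-p)^{|B\setminus V(M_{i-1})|}\geq 1-e^{-(1-o(1))\alpha c/2}\geq (1-o(1))\frac{\alpha c}{2},$$
using the hypothesis $c\leq 2^{-8}$ to linearise the exponential. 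I then couple the step indicators $X_i$ with an i.i.d.\ Bernoulli sequence $Z_i$ of parameter $(1-o(1))\alpha c/2$ so that $X_i\geq Z_i$ whenever $|M_{i-1}|<2^{-4}c\alpha^2 n$. A Chernoff bound on $\sum Z_i$ — whose mean $(1-o(1))\alpha^2 cn/4$ comfortably exceeds $2^{-4}c\alpha^2 n$ — gives $\sum Z_i>2^{-4}c\alpha^2 n$ with probability $1-e^{-\Omega(n)}$. Had $|M|$ stayed below $2^{-4}c\alpha^2 n$ throughout the greedy process, we would then have $|M|\geq \sum Z_i>2^{-4}c\alpha^2 n$, a contradiction; hence with the claimed probability the matching reaches the required size.

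The main delicate point is the tracking of constants: the target $2^{-4}c\alpha^2 n$ sits strictly below the natural expectation $\alpha^2 cn/4$ of the greedy process, so the argument relies on $c\leq 2^{-8}$ to keep $\alpha c$ small enough that the first-order approximation of $1-e^{-\alpha c/2}$ by $\alpha c/2$ is accurate enough to absorb all Chernoff losses without dropping below $2^{-4}c\alpha^2 n$.
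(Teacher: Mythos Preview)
Your argument is correct. Part~(i) is identical to the paper's proof: Chernoff on the binomial $e(R[N_{G_\alpha}(x)])$ and a union bound over the $n$ vertices.

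For part~(ii) you take a genuinely different route. The paper first shows by Chernoff that $e(R[A,B])\geq c\alpha^2 n/8$, and then invokes the Kim--Vu inequality (via \Cref{lem:properties1}) to bound the number $Y$ of non-isolated edges in $R[A\cup B]$ by $16\alpha^3 c^2 n$; the remaining isolated edges then form a matching of size at least $c\alpha^2 n/8 - 16\alpha^3 c^2 n\geq 2^{-4}c\alpha^2 n$, where the last inequality is exactly where the hypothesis $c\leq 2^{-8}$ enters. You instead run a one-sided greedy exposure along $A$ and couple the step indicators with i.i.d.\ Bernoullis, using $c\leq 2^{-8}$ only to make the linearisation $1-e^{-\alpha c/2}\approx \alpha c/2$ accurate and to keep $|B\setminus V(M_{i-1})|$ essentially undiminished. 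Your approach is more elementary---it avoids the Kim--Vu machinery entirely---and yields the stronger failure probability $e^{-\Omega(n)}$ per pair rather than the paper's $e^{-\Omega(n^{1/4})}$; the paper's route, on the other hand, simply recycles \Cref{lem:properties1}, which is already proved for use elsewhere in the absorber construction.
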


The proofs of \Cref{lem:properties1,lem:properties2} use some standard probabilistic methods that are technical, hence for completeness we put them in the appendix. Now we are ready to construct the special matching $M\subset R$, that is the cornerstone of our absorber construction.

\begin{lemma}\label{lem:absorbingmatching}
Let $0<\alpha\leq 1$ and $0<\varepsilon \le 2^{-4}\alpha^2$ be constants. If $G_\alpha\in\G_\alpha(n)$ and $R\sim G(n,p)$ with $p=\varepsilon/2n$ and $V(R)=V(G_\alpha)$, then with high probability there exists a matching $M\subset R$ with $\varepsilon n/8\leq v(M)\leq \varepsilon n$, such that 
\begin{align}\label{eq:aaaa}
e\big(R[N_{G_{\alpha}}(x)]\cap M\big)\geq \varepsilon^2 n    
\end{align}
holds for every $x\in V(R)$. 
\end{lemma}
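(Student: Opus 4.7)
The plan is to exploit the extreme sparsity of $R$ (average degree $\approx \varepsilon/2 \ll 1$), which forces most edges of $R$ to already be vertex-disjoint from one another, so that simply discarding the few non-isolated edges yields a matching without materially changing edge counts. Concretely, set $c = \varepsilon/2$ and apply \Cref{lem:properties1} and \Cref{lem:properties2}(i) simultaneously to $R \sim G(n, c/n)$. Take
$$M := E(R) \setminus Y(R),$$
i.e. the subset of edges of $R$ that share no vertex with any other edge of $R$; by the definition of $Y(R)$ this $M$ is automatically a matching, so the only nontrivial work is to verify its size and the neighbourhood-coverage bound \eqref{eq:aaaa}.

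For the size of $M$, \Cref{lem:properties1} gives $\varepsilon n/8 \le e(R) \le \varepsilon n/2$ and $|Y(R)| \le 2c^2 n = \varepsilon^2 n/2$, so
$$|M| \ge e(R) - |Y(R)| \ge \frac{\varepsilon n}{8} - \frac{\varepsilon^2 n}{2} \ge \frac{\varepsilon n}{16},$$
where the last step uses $\varepsilon \le 2^{-4}\alpha^2 \le 1/16$. Combined with $|M| \le e(R) \le \varepsilon n/2$, this yields $v(M) = 2|M| \in [\varepsilon n/8, \varepsilon n]$, as required. For the neighbourhood-coverage condition, \Cref{lem:properties2}(i) gives $e\bigl(R[N_{G_\alpha}(x)]\bigr) \ge \alpha^2 c n/4 = \alpha^2 \varepsilon n/8$ for every $x \in V(R)$, and since any edge of $R[N_{G_\alpha}(x)]$ that was discarded when passing from $R$ to $M$ must lie in $Y(R)$, we obtain
$$e\bigl(R[N_{G_\alpha}(x)] \cap M\bigr) \ge e\bigl(R[N_{G_\alpha}(x)]\bigr) - |Y(R)| \ge \frac{\alpha^2 \varepsilon n}{8} - \frac{\varepsilon^2 n}{2} \ge \varepsilon^2 n,$$
where the final inequality uses $\varepsilon \le \alpha^2/16$.

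There is essentially no genuine obstacle beyond checking that the parameters line up. The hypothesis $\varepsilon \le 2^{-4}\alpha^2$ is precisely what makes the ``throw away the non-isolated edges'' strategy work: it forces the error term $|Y(R)| = O(\varepsilon^2 n)$ to be negligible compared with the neighbourhood edge density $\alpha^2 \varepsilon n/8$ guaranteed by \Cref{lem:properties2}(i). In particular, no secondary probabilistic argument --- no union bound over vertices, no alteration, no Local Lemma --- is needed; the two quasi-random inputs from \Cref{lem:properties1} and \Cref{lem:properties2}(i) are used directly, and the high-probability conclusion for $M$ is inherited from their high-probability conclusions.
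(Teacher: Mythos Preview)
Your proof is correct and follows essentially the same approach as the paper: set $c=\varepsilon/2$, invoke \Cref{lem:properties1} and \Cref{lem:properties2}(i), and exploit that $Y(R)=O(\varepsilon^2 n)$ is negligible compared with both $e(R)$ and $e\bigl(R[N_{G_\alpha}(x)]\bigr)$. The only cosmetic difference is that the paper takes $M$ to be a \emph{maximal} matching in $R$ (and then observes that every edge outside $M$ must be non-isolated), whereas you take $M$ to be the set of isolated edges directly; since the isolated edges are contained in any maximal matching and the lower bounds are derived via the same subtraction of $Y(R)$, the two arguments are effectively identical.
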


\begin{proof}
Let $c=\varepsilon/2 \le 2^{-4}$, so that $p=c/n$, and observe that, by \Cref{lem:properties1}, with high probability $R$ satisfies
$$cn/4\leq e(R)\leq cn \qquad\textup{and}\qquad Y(R)\leq 2c^2n.$$
By \Cref{lem:properties2}$(i)$, with high probability $G_\alpha \cup R$ satisfies that for each $x\in V(R)$,
$$e\big(R[N_{G_\alpha}(x)]\big)\geq \alpha^2cn/4.$$
For the rest of the proof, we assume these graphs are fixed with these properties. 

Let $M$ be a maximal matching in $R$, and observe that
$$e(M)\geq e(R)-Y(R)\geq cn/4-2c^2n\geq cn/8\geq 2^{-4}\varepsilon n$$
since $Y(R)\leq 2c^2n$ and $c=\varepsilon/2 \le 2^{-4}$, and that
$$ e(M)\leq e(R)\leq \varepsilon n/2.$$
For each $x\in V(R)$ we have 
$$e\big(R[N_{G_\alpha}(x)]\big)\geq\alpha^2\varepsilon n/8\geq 2\varepsilon^2 n$$
since $\alpha^2 \ge 2^4\varepsilon$. Hence the number of edges of $M$ in $R[N_{G_\alpha}(x)]$ is at least
$$e\big(R[N_{G_\alpha}(x)]\big)-Y(R)\geq 2\varepsilon^2 n-2\cdot(\varepsilon/2)^2n\geq \varepsilon^2 n,$$
since $M\subset R$ is a maximal matching in $R$. Thus 
$$e\big(R[N_{G_\alpha}(x)]\cap M\big)\geq \varepsilon^2 n$$
holds for every $x\in V(R)$, as required.
\end{proof}

Finally let's construct the absorber to finish this subsection, as follows.

\begin{proof}[Proof of \Cref{lem:absorber}] Recall that $\alpha>0$ and $\varepsilon<2^{-14}\alpha^2$. Let $G_\alpha\in \G_\alpha(n)$ be a graph on $n$ vertices with $\delta(G_\alpha)\geq \alpha n$. Let $c_1=\varepsilon/2$, $c_2=2^{-8}$ and let $R_i\sim G(n,p_i)$ with $p_i= c_i/n$ (for $i\in \{1,2\}$) be two independent random graphs on $V(G_\alpha)$.  

First by \Cref{lem:absorbingmatching}, with high probability there exists a matching $M\subset R_1$ with $\varepsilon n/8\leq v(M)\leq \varepsilon n$, such that for every $x\in V(G_\alpha)$,
$$e\big(R_1[N_{G_\alpha}(x)]\cap M\big)\geq \varepsilon^2 n.$$

Next since $c_2=2^{-8}$, with high probability $R_2$ and $R_2\cup G_\alpha$ satisfy property (ii) of \Cref{lem:properties2}. To construct our absorber, we simply have to join up the edges of the matching given by \Cref{lem:absorbingmatching} into a path, and show that this path has the claimed absorption property. To form the path, we will use property (ii) of \Cref{lem:properties2}. Let
$$E(M)=\{a_1b_1,a_2b_2,...,a_mb_m\}$$
with $m=e(M)$, that is, $2^{-4}\varepsilon n\leq m\leq \varepsilon n/2$. We claim that we can greedily pick paths of length 2 or 3 to connect all the edges of $M$. Indeed for each pair of $b_i$ and $a_{i+1}$ for $1\leq i\leq m-1$, if $$|N_{G_\alpha}(b_i)\cap N_{G_\alpha}(a_{i+1})|\geq \alpha n/2\geq 4\varepsilon n,$$
then we have at least $4\varepsilon n$ choices of $z_i\in N_{G_\alpha}(b_i)\cap N_{G_\alpha}(a_{i+1})$ to connect $b_i$ and $a_{i+1}$ by the path $(b_i,z_i,a_{i+1})\subset G_\alpha$. Otherwise we know that
$$|N_{G_\alpha}(b_i)\setminus N_{G_\alpha}(a_{i+1})|\geq\alpha n/2\quad\textup{and}\quad |N_{G_\alpha}(a_{i+1})\setminus N_{G_\alpha}(b_i)|\geq\alpha n/2.$$
Hence by (ii) of \Cref{lem:properties2}, there is a matching $M_{b_i,a_{i+1}}\subset R_2$ between $N_{G_\alpha}(b_i)\setminus N_{G_\alpha}(a_{i+1})$ and $N_{G_\alpha}(a_{i+1})\setminus N_{G_\alpha}(b_i)$ of size
\begin{align*}
e(M_{b_i,a_{i+1}})\geq 2^{-4}c_2\alpha^2 n \geq 2^{-12}\alpha^2 n \geq2^{14}\cdot 2^{-10}\varepsilon n \geq 4\varepsilon n, 
\end{align*}
since $\alpha\leq 1$ and $\alpha^2>2^{14}\varepsilon$. Then we have at least $4\varepsilon n$ choices of isolated edges $u_iv_i\subset M_{b_i,a_{i+1}}$ to connect $b_i$ and $a_{i+1}$ by the path $(b_i,u_i,v_i,a_{i+1})\subset R_2\cup G_\alpha$. Now considering both cases, we can greedily connect the entire $E(M)$, since for each step $1\leq i\leq m-1$ we need to forbid at most 
$$2\cdot v(M)\leq 4\varepsilon n$$
choices. Then we can get a path $A$ containing $M$ of length
$$\varepsilon n/8\leq v(A)\leq 4\varepsilon n.$$
Setting $C=c_1+c_2$, we have $R_1\cup R_2\subset R$ where $R\sim G(n,p)$ with $p=C/n$ and $V(R)=V(G_\alpha)$. Hence with high probability we can find the absorber path $A\subset G_\alpha\cup R$ with $\varepsilon n/8\leq v(A)\leq 4\varepsilon n$, such that  \eqref{eq:aaaa1} holds for every $x\in V(G_\alpha)$.

Finally, it remains to check the absorption property of $A$. Let $A'\subset A$ be a path with $v(A')\geq v(A)-2\varepsilon^3 n$, and let $U\subset V(G_\alpha)\setminus V(A)$ with $U=\{x_1,...,x_t\}$ for some $t\leq\varepsilon^2n/2$. Now for every $x_i$ with $1\leq i\leq t$, by \eqref{eq:aaaa1}, there exists at least $$(\varepsilon^2-2\varepsilon^3) n\geq \frac34\cdot\varepsilon^2 n\geq \frac32\cdot |U|$$
vertex-disjoint edges $A'[N_{G_\alpha}(x_i)]$. Hence we can greedily choose one such edge $y_iz_i$ to absorb $x_i$ by adding $y_ix_i$ and $z_ix_i$ to $A'$ and remove $y_iz_i$, for every $1\leq i\leq t$ one by one. Then let $A'_U$ be the final path after we absorb every $x_i\in U$, and we have
$$V(A'_U)=V(A')\cup U$$
as required.
\end{proof}

\subsection{Proof of \Cref{thm:0<alpha<3/4'}}
Let's put \Cref{lem:monochromaticpath,lem:longpath,lem:absorber} together to construct a Hamilton cycle with $\Omega(n)$ colour bias in $G_{\alpha}\cup R$.

\begin{proof}[Proof of \Cref{thm:0<alpha<3/4'}]\label{subsec:0<alpha<3/4'}
Let $0<\varepsilon<2^{-12}\alpha^2$, $C_1$ be the constant as defined in \Cref{lem:absorber}, and $C_2,C_3$ be two sufficiently large constants. Let $G_\alpha\in \G_\alpha(n)$, and let $R_i\sim G(n,p_i)$ with $p_i= C_i/n$ ($1\leq i\leq 3$) be three independent random graphs on $V(G_\alpha)$. 

First by \Cref{lem:absorber}, with high probability the following holds. There exists a path $A\subset G_\alpha\cup R_1$ with $\varepsilon n/8\leq v(A)\leq 4\varepsilon n$, such that 
\begin{align}\label{eq:bbbb}
e\big(R_1[N_{G_\alpha}(x)]\cap A\big)\geq \varepsilon^2 n    
\end{align}
holds for every $x\in V(G_\alpha)$. Moreover for every $U\subset V(G_\alpha)\setminus V(A)$ with $|U|\leq \varepsilon^2 n/2$ and every path $A'\subset A$ of length at least $v(A)-2\varepsilon^3 n$, there exists a path $A'_U\subset G_\alpha\cup R$ such that $V(A'_U)=V(A')\cup U$.

Next recall that $C_2$ is sufficiently large. By \Cref{lem:monochromaticpath}, with high probability for every $r$-colouring on $E(R_2)$, there exists a path $P_1\subset R_2[V(G_\alpha)\setminus V(A)]$ with 
$$v(P_1)=\big(2/(r+1)-\varepsilon^3\big)(n-v(A))\geq \big(2/(r+1)-\varepsilon^3\big)n-v(A),$$
which contains at least 
\begin{align}\label{eq:P1bias}
v(P_1)-K\geq \big(2/(r+1)-\varepsilon^3\big)n-v(A)-K\geq\big(2/(r+1)-5\varepsilon\big)n    
\end{align} 
edges of the same colour, where $K=K(\varepsilon,r)$ is some constant, since $v(A)\leq 4\varepsilon n$ and $K<\varepsilon n$. Indeed \Cref{lem:monochromaticpath} may provide a longer path with length more than $v(P_1)$, then we just need to cut some edges from the end to get $P_1$ of length exactly $\big(2/(r+1)-\varepsilon^3\big)(n-v(A))$. Then let $\beta=1-\big(v(A)+v(P_1)\big)/n$. Since $C_3$ is sufficiently large, by \Cref{lem:longpath}, with high probability there is a path $P_2\subset R_{2}[V(G_\alpha)\setminus V(A\cup P_1)]$ with
$$v(P_2)\geq n-v(A)-v(P_1)-\varepsilon^3 n \geq \big(1-2/(r+1)\big)n.$$ 
Note then the union $P_1\cup P_2$ covers all but $\varepsilon^3 n$ vertices of $V(G_\alpha)\setminus V(A)$. Indeed, setting $T=V(G_\alpha)\setminus V(A\cup P_1\cup P_2)$ we have
$$|T|\leq n-v(P_1)-v(P_2)-v(A)\leq  \varepsilon^3 n.$$

Now we use $R_3$ to connect $A,P_1$ and $P_2$ to be a cycle. Set $\delta=\varepsilon^3$, note that 
$$2e/\delta<e^{C_3\delta/2}$$
since $C_3$ is sufficiently large. Hence by \Cref{lem:properties1} with high probability $R_3$ is $\delta n$-joined. Then we can find an edge in $R_3$ to connect the last $\delta n$ vertices of $A$ with the first $\delta n$ vertices of $P_1$. Similarly, we can connect $P_1$ to $P_2$ and $P_2$ to $A$, since $A,P_1$ and $P_2$ are vertex-disjoint. To be precise, let's write 
$$A=(x_1,...,x_r),\qquad P_1=(y_1,...,y_s)\qquad \textup{and}\qquad P_2=(z_1,...,z_t)$$ 
for $r=v(A), s=v(P_1)$ and $t=v(P_2)$.
Since $R_3$ is $\delta n$-joined, there exists $x_{r'}y_{s'}\in E(G_3)$  connecting $\{x_{r-\delta n},...,x_r\}$ and $\{y_1,...,y_{\delta n}\}$. Then add $x_{r'}y_{s'}$ to $A\cup P_1$ and transfer $\{x_{r'+1},...,x_r\}\subset \{x_{r-\delta n},...,x_r\}$ and $\{y_1,...,y_{s'}\}\subset\{y_1,...,y_{\delta n}\}$ to $T$. Then let's do the same operation for the last $\delta n$ vertices of $P_1$ and the first $\delta n$ vertices of $P_2$ to connect $P_1$ and $P_2$, and then for the last $\delta n$ vertices of $P_2$ and the first $\delta n$ vertices of $A$ to connect $A$ and $P_2$, which makes a cycle $H_0$. Note then $T$ has size
$$|T|\leq \varepsilon^3n +6\delta n\leq 7\varepsilon^3 n<\varepsilon^2 n/2,$$ 
since $\delta=\varepsilon^3$ and $\varepsilon$ is small enough.

We can extend $H_0$ to a Hamilton cycle $H$ on $V(G_\alpha)$ by applying the  absorption property in \Cref{lem:absorber}. Note that the path $A'=A\cap H_0$ has length $v(A')\geq v(A)-2\varepsilon^3 n$, while $T\subset V(G_\alpha)\setminus V(A)$ and $|T|\leq\varepsilon^2 n/2$. Thus, there exists a path $A'_T\subset G_\alpha\cup R_1\cup R_2\cup R_3$, such that $V(A'_T)=V(A')\cup T$. Consequently, $H=H_0\cup A'_T$ forms a Hamilton cycle on $V(G_\alpha)$. Setting $C=C_1+C_2+C_3$ and $R\sim G(n,p)$ with $p=C/n$, we have
$$H\subset G_\alpha\cup R_1\cup R_2\cup R_3\subset G_\alpha\cup R.$$

Finally it remains to check $H$ has at least $n/2r$ colour bias. Indeed by our construction, the path $P'_1:=(y_{\delta n},...,y_{s-\delta n})$
is contained in $H$ which contains at least 
$$\big(2/(r+1)-5\varepsilon\big)n-2\varepsilon^3 n\geq n/r+n/2r$$
edges of the same colours by \eqref{eq:P1bias}, and so does $H$ as required.
\end{proof}

\medskip

\section{Colour bias with changing random edges}\label{section 4}

In this section, we consider the critical endpoint that $\alpha=(r+1)/2r$. In contrast to $0<\alpha<(r+1)/2r$, in this case adding $m$ random edges to every $G\in \G_{\alpha}(n)$ typically yields a Hamilton cycle with at least $\Omega(m)$ colour bias in any $r$-colouring. To be precise, we will prove the following theorem.

\begin{theorem}\label{thm:alpha=3/4'}
Let $r\geq2$, $\alpha=(r+1)/2r$ and $1\ll m< 2^{-5}r^{-1} n$. If $G_\alpha\in\G_\alpha(n)$, then with high probability every $r$-colouring of $E\big(G_\alpha\cup G(n,m)\big)$ has a Hamilton cycle with at least $2^{-20}r^{-1}m$ colour bias.
\end{theorem}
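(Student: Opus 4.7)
The proof of Theorem~\ref{thm:alpha=3/4'} follows by applying the structural Proposition~\ref{prop:structure} to the perturbed graph $G' := G_\alpha \cup G(n,m)$ equipped with the given colouring $\chi$. Since $\delta(G') \geq \delta(G_\alpha) \geq (r+1)n/(2r)$, Proposition~\ref{prop:structure} yields a dichotomy: either $G'$ already contains a Hamilton cycle with colour bias $\Omega(m)$ and we are done, or there exist $W \subseteq V(G')$ with $|W| \geq (r+1)n/(2r)$ and a colour $c^* \in [r]$ such that every matching in $G'[W]$ avoiding colour $c^*$ has size at most $\eta m$, for some small constant $\eta = \eta(r) > 0$. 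We work in the second case and aim at a contradiction.

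The key observation is an edge-counting identity. For any Hamilton cycle $H$ of $G'$, write $h$ for the number of edges of $H$ inside $W$ and $k$ for the number inside $V \setminus W$. Summing vertex degrees within $W$ gives $2h + b = 2|W|$, where $b$ counts the crossing edges, and likewise $2k + b = 2(n - |W|)$; subtracting yields the identity $h = 2|W| - n + k$. The edges of $H$ inside $W$ form a disjoint union of paths, which decomposes into two matchings; by the structural property of $W$, at most $2\eta m$ of these edges avoid colour $c^*$. Hence $H$ has at least $2|W| - n + k - 2\eta m$ edges coloured $c^*$. Consequently, any Hamilton cycle of $G'$ with $k \geq 2^{-19}r^{-1}m$ already has colour bias at least $2^{-20}r^{-1}m$ in colour $c^*$ (choosing $\eta$ small enough), contradicting our assumption.

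It therefore suffices to produce a Hamilton cycle of $G'$ with $\Omega(m)$ edges inside $V \setminus W$. If $|W| \geq (r+1)n/(2r) + \Omega(m)$, then any Hamilton cycle of $G_\alpha$ (which exists by Dirac's theorem, since $\delta(G_\alpha) > n/2$) already satisfies $h \geq 2|W| - n \geq n/r + \Omega(m)$, finishing the argument. So we may assume $|V \setminus W| = \Theta(n)$. With high probability over $G(n,m)$, a standard second-moment calculation (together with the sparse regime $m \ll n$) guarantees that $G(n,m)[V \setminus W]$ contains a matching $M$ of size $\Omega(m)$.

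The main obstacle, which I expect to dominate the technical work, is embedding $\Omega(m)$ edges of $M$ into a Hamilton cycle of $G'$, each appearing as an isolated length-one excursion from $W$ (so that each contributes exactly one to $k$). I would achieve this via an absorption-style construction analogous to Section~\ref{section 3}: build a partial structure incorporating the selected matching edges — the high minimum degree of $G_\alpha$ ensures that each endpoint of an $M$-edge has plentiful neighbours in $W$ providing the required crossing edges — and then absorb the remaining vertices into a Hamilton cycle without disturbing the contribution from $M$. A cleaner alternative is an iterated $2$-opt argument: starting from a Hamilton cycle of $G_\alpha$ and repeatedly swapping a pair of crossing edges $\{a_1a_2, b_1b_2\}$ (with $a_1,b_1 \in W$ and $a_2,b_2 \in V\setminus W$) for $\{a_1b_1, a_2b_2\}$, where $a_2b_2 \in M$ and $a_1b_1$ is supplied by $G_\alpha$; each such swap raises $k$ by one. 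The delicate point in either route is to guarantee $\Omega(m)$ simultaneous feasible modifications, for which I would leverage the quasi-random properties of $G(n,m)$ and the absorption tools already developed in Section~\ref{section 3}.
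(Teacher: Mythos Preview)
Your overall strategy matches the paper's: apply Proposition~\ref{prop:structure}, find a matching of size $\Omega(m)$ on the complementary side $V\setminus W$, thread it through a Hamilton cycle, and count. The edge-counting identity $h=2|W|-n+k$ and the conclusion that $k=\Omega(m)$ forces colour bias $\Omega(m)$ are exactly right. Two steps are mis-assessed, however.

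First, the ``standard second-moment calculation'' for the matching in $G(n,m)[V\setminus W]$ is insufficient. The set $W$ is determined by the colouring $\chi$, which the adversary chooses \emph{after} seeing $G(n,m)$; so $V\setminus W$ is a function of the random edges, and you need the matching property simultaneously for \emph{every} subset of size $(1-\alpha)n$. A union bound over $\binom{n}{(1-\alpha)n}$ sets against a merely $o(1)$ per-set failure probability is hopeless when $m\ll n$. The paper handles this in Lemma~\ref{lem:3/4quasiproperty}: if $G_\alpha[W']$ itself has no $\Omega(m)$-matching then $W'$ is, up to $o(m)$ vertices, an independent set of $G_\alpha$; since $\delta(G_\alpha)\ge\alpha n$, any two such near-independent sets of size $(1-\alpha)n$ are either almost identical or almost disjoint, so there are only $O(1)$ of them; one then union-bounds over those $O(1)$ sets. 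This structural reduction, not the second moment, is where the real work lies.

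Second, embedding the matching into a Hamilton cycle is not the main obstacle but a one-line step. Since $\delta(G')\ge n/2+n/(2r)$ and $e(M)\le m<n/r$, P\'osa's theorem (Lemma~\ref{lem:Posa}) immediately gives a Hamilton cycle of $G'$ containing $M$ as a subgraph. No absorption, no $2$-opt, no isolated-excursion bookkeeping is needed: every edge of $M$ lies inside $V\setminus W$ and so contributes one to $k$ regardless of how the rest of the cycle is routed.
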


Notice that \Cref{thm:alpha=3/4} can be derived directly from \Cref{lem:upperbound (1+r)/2r} and \Cref{thm:alpha=3/4'}. Our proof of \Cref{thm:alpha=3/4'} relies on a careful analysis of the structure of 
$G_\alpha\in \G_\alpha(n)$ equipped with an $r$-colouring and lacking Hamilton cycles with $\Omega(m)$ colour bias. We find $G_\alpha$ must contain an almost monochromatic induced subgraph of size $\alpha n$ (see \Cref{prop:structure}). This forms the cornerstone of our approach for finding a Hamilton cycle containing more than $n/r+\Omega(m)$ monochromatic edges in $G_\alpha\cup G(n,m)$. In the remainder of this section we will first give the structural \Cref{prop:structure}, and then present the proof of \Cref{thm:alpha=3/4'}.

\medskip

\subsection{A structural proposition }\label{section 4.1}

Our main aim in this and the next subsection is to show the following structural proposition for $G_\alpha$ without Hamilton cycles of $\Omega(m)$ colour bias.

\begin{proposition}\label{prop:structure}
Let $r\geq 2$, let $n\in \mathbb{N}$ be sufficiently large, let $1\leq b\leq 2^{-10} r^{-2} n$, and set $\alpha=(r+1)/2r$. Let $G_\alpha\in\G_{\alpha}(n)$ and let $\chi$ be an $r$-colouring of $E(G_\alpha)$. If each Hamilton cycle in $G_\alpha$ has at most $b$ colour bias, then there exists $U\subset V(G_\alpha)$ with $|U|=\alpha n$ and a colour $c^*\in [r]$ such that
\begin{align}\label{eq:qqqqqq}
e(M)< 2^{9}r\cdot b 
\end{align}
holds for every matching $M\subset G_\alpha[U]$ that has no edges of colour $c^*$.  
\end{proposition}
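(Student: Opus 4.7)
I would argue by contradiction. Suppose the conclusion fails: for every $U\subseteq V(G_\alpha)$ of size $\alpha n$ and every colour $c\in[r]$, the subgraph $G_\alpha[U]$ contains a matching of size at least $2^9 r b$ with no edges of colour $c$. The aim is to use this negation to produce a Hamilton cycle of $G_\alpha$ with colour bias strictly greater than $b$, contradicting the hypothesis. Since $\delta(G_\alpha)\geq \alpha n$, the negation applies in particular to any $\alpha n$-subset of each neighbourhood $N(v)$, so \emph{every} vertex $v$ and \emph{every} colour $c$ admit a $c$-free matching of size $\Omega(r b)$ inside $N(v)$. This local abundance of $c$-free matchings is the structural fact that drives the rest of the argument.

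The construction of the Hamilton cycle splits into two stages. First I would build a nearly monochromatic cycle $F$ of length $\ell=\Omega(r b)$ in which all but $O(1)$ edges share a common colour $c^*$. Starting from some edge $uv$ of colour $c^*$, one extends a colour-$c^*$ path greedily: at each step the current endpoint $w$ must have many colour-$c^*$ neighbours in $N(w)$, because the complement of any $c^*$-free matching inside $N(w)$ contains $\Omega(r b)$ vertices joined to $w$ by a colour-$c^*$ edge. The path keeps growing until it can be closed into a cycle with at most one non-$c^*$ edge. The colour $c^*$ itself is selected by an averaging/pigeonhole argument so that this extension process actually goes through.

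Second, I would extend $F$ to a Hamilton cycle of $G_\alpha$ while preserving the bias coming from $F$. Fix two anchor vertices $x,y\in V(F)$ and build a second cycle $H$ on $(V(G_\alpha)\setminus V(F))\cup\{x,y\}$ enjoying the following two-way absorption property: for every vertex $v\in V(F)\setminus\{x,y\}$ there exist two distinct edges of $H$ both of whose endpoints lie in $N_{G_\alpha}(v)$. Splicing $F$ and $H$ together at $\{x,y\}$ and then greedily absorbing the interior of $F$ into $H$ (choosing, for each vertex, one of its two insertion sites) yields a Hamilton cycle $H^*$ of $G_\alpha$. The two-way flexibility is what lets us bias the newly introduced edges towards colour $c^*$ (or at least keep them from being heavily against $c^*$), so that $H^*$ retains $\Omega(b)$ edges of colour $c^*$ above the expected $n/r$, producing bias strictly greater than $b$.

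The main obstacle is constructing $H$ with the two-way absorption property. Because $\alpha=(r+1)/(2r)$ is exactly the extremal threshold --- the Freschi--Hyde--Lada--Treglown construction shows that at this degree one already has Hamilton cycles with \emph{zero} bias --- the argument cannot rely on the minimum-degree condition alone. Instead, for every $v\in V(F)\setminus\{x,y\}$ one uses the $c$-free matching hypothesis inside $N(v)$ to locate two candidate edges lying on the eventual $H$, and one must arrange these edge-pairs to be compatible across all $v$ so that a single Hamilton-like cycle $H$ on $(V(G_\alpha)\setminus V(F))\cup\{x,y\}$ actually exists. This is where the bulk of the technical work lies; the two stages above are presumably what the auxiliary lemmas referenced in the introduction are designed to encapsulate. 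Once $H$ is built, direct bookkeeping using the assumed bias bound on \emph{every} Hamilton cycle, applied to $H^*$ itself, yields the desired contradiction.
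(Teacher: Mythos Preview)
Your high-level strategy --- contradict the conclusion, build a short nearly monochromatic cycle $F$, extend to Hamilton cycles in two different ways and compare --- matches the paper. But several of the pieces are either wrong or missing.

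\textbf{Stage 1 is flawed as written.} The negated hypothesis gives, for every $v$ and every $c$, a large $c$-free matching \emph{inside} $G_\alpha[N(v)]$; it says nothing about the colours of edges \emph{incident to} $v$. So your greedy extension (``the complement of any $c^*$-free matching inside $N(w)$ contains $\Omega(rb)$ vertices joined to $w$ by a colour-$c^*$ edge'') does not follow. The paper instead takes the majority colour $c^*$ (at least $n^2/2r$ edges), applies Erd\H{o}s--Gallai to find a monochromatic path of length $t=2^5 rb$, and closes it using the common neighbourhood of its endpoints, which has size $\ge (2\alpha-1)n=n/r>t$.

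\textbf{The absorption mechanism in the paper is different from yours.} The paper does not use two absorbing edges per interior vertex of $F$. It selects for each $v_i\in V(F)\setminus\{v_0,v_t\}$ a \emph{single} edge $x_iy_i\in G_\alpha[N(v_i)]$, then uses P\'osa's theorem (the condition $\delta\ge (n+\ell)/2$ is exactly what $\alpha=(r+1)/2r$ provides, since $t\le n/2r$) to build a Hamilton cycle $H$ on $V(G_\alpha)\setminus(V(F)\setminus\{v_0,v_t\})$ containing all the $x_iy_i$ together with $v_0v_t$. Two Hamilton cycles of $G_\alpha$ are then compared: $H_1$, obtained by replacing $v_0v_t$ in $H$ with the rest of $F$ (gaining $t-1$ edges of colour $c^*$), and $H_2$, obtained by replacing each $x_iy_i$ with the path $x_iv_iy_i$. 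Bounding the bias of $H_2$ requires most $x_iy_i$ to be \emph{$c^*$-good}, meaning $\mathbbm{1}[\chi(v_ix_i)=c^*]+\mathbbm{1}[\chi(v_iy_i)=c^*]-\mathbbm{1}[\chi(x_iy_i)=c^*]\le 0$.

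\textbf{The main missing idea.} The real difficulty is not building $H$ (P\'osa handles that for free) but ensuring that enough vertices of $F$ admit a $c^*$-good edge in their neighbourhood, disjoint from $F$ and from each other. The paper isolates two obstructions: the set $X_{c^*}$ of vertices $v$ for which almost every matching edge of $G_\alpha[N(v)]$ lies inside $N_{c^*}(v)$, and the set $Y_{c^*}$ of vertices $v$ for which the bipartite graph $G_\alpha[N_{c^*}(v),N_{\bar c^*}(v)]$ contains a large $c^*$-free matching. Outside $X_{c^*}\cup Y_{c^*}$ a $c^*$-good matching of size $4t$ always exists, so the greedy selection succeeds. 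If either $|X_{c^*}|$ or $|Y_{c^*}|$ is at least $t/16$, a completely separate construction is needed: one builds many vertex-disjoint \emph{bowties} (two triangles sharing a centre) with prescribed colour patterns, threads a Hamilton cycle of the complement through all side-edges via P\'osa, and then absorbs each centre by deleting one of its two side-edges; the choice of side supplies the bias. This bowtie dichotomy is the substantive technical content of the proof, and your proposal has no analogue of it.
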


In the remainder of this and the next subsection, for convenience, let's fix $r\geq 2$, $\alpha=(r+1)/2r$, $n\in \mathbb{N}$ be sufficiently large and $1 \leq b \leq 2^{-10} r^{-2} n$. Let $G_\alpha\in\G_{\alpha}(n)$ and let $\chi$ be an $r$-colouring of $E(G_\alpha)$. Denote 
$$C=[v_1,v_2,...,v_t]$$
to be the cycle $C$ with $V(C)=\{v_1,v_2,...,v_t\}$ and $E(C)=\{v_1v_2,...,v_{t-1}v_t,v_tv_1\}$. To prove \Cref{prop:structure}, we will construct a Hamilton cycle in $G_\alpha$ with $b$ colour bias by assuming \eqref{eq:qqqqqq} is false; to be precise, we will use the following assumption. 

\begin{assumption}\label{ass:111}
For each $c\in [r]$ and $v\in V(G_\alpha)$, there exists a matching $M\subset G_\alpha[N(v)]$ which contains at least $2^9rb$ edges and no edges of colour $c$.
\end{assumption}

Note that the conclusion \eqref{eq:qqqqqq} of \Cref{prop:structure} allows us to assume that this property holds for every set $U \subset V(G_\alpha)$ with $|U| \ge \alpha n$. However, we will only need to use it in the case when $U = N(v)$ for some vertex $v \in V(G_\alpha)$.

Inspired by an idea from \cite{BCJP}, we will use the following lemma to find a Hamilton cycle with $b$ colour bias.

\begin{lemma}\label{lem:propkey}
Let $0\leq s < t \leq n/2$. Assume there exists a colour $c^*\in [r]$ and two cycles $F,H\subset G_\alpha$ where $F=[v_0,...,v_t]$ such that the following properties hold:
\begin{itemize}
\item[$(i)$] All edges in $E(F)$ except $\{v_0v_1,v_0v_t\}$ are of colour $c^*$ in $\chi$.

\item[$(ii)$] The two cycles $F$ and $H$ satisfy 
$$V(F)\cup V(H)=V(G_\alpha),\quad V(F)\cap V(H)=\{v_0,v_t\} \quad\textup{and}\quad E(F)\cap E(H)=\{v_0v_t\}.$$

\item[$(iii)$] There exists distinct edges $x_1y_1,...,x_{t-1}y_{t-1}\in E(H)$ such that $v_ix_i,v_iy_i\in E(G_\alpha)$ for all $1\leq i\leq t-1$. Moreover,
\begin{align}\label{eq:colourbiassum}
\sum_{i=1}^{t-1}\Big( \mathbbm{1}\big[ \chi(v_ix_i)=c^* \big] + \mathbbm{1}\big[ \chi(v_iy_i)=c^* \big] - \mathbbm{1}\big[ \chi(x_iy_i)=c^* \big] \Big)< s. 
\end{align}
\end{itemize}
Then there exists a Hamilton cycle in $G_\alpha$ with colour bias at least $(t-s)/2$.
\end{lemma}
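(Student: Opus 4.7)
The plan is to construct two Hamilton cycles $H_1,H_2\subset G_\alpha$, by splicing $F$ and $H$ together in two different ways, whose numbers of $c^*$-coloured edges differ by roughly $t-s$; a pigeonhole argument over the $r$ colours will then force at least one of them to have colour bias at least $(t-s)/2$.

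To build $H_1$, take $E(H_1):=\big(E(F)\cup E(H)\big)\setminus\{v_0v_t\}$. Property (ii) guarantees that every vertex of $V(G_\alpha)$ is incident to exactly two edges of $H_1$: the interior vertices $v_1,\ldots,v_{t-1}$ of $F$ appear only on $F$; every vertex of $V(H)\setminus\{v_0,v_t\}$ appears only on $H$; and each of $v_0,v_t$ loses the common edge $v_0v_t$ but keeps one neighbour along $F$ and one along $H$. Connectedness is immediate by walking $v_0\to v_1\to\cdots\to v_t$ along $F$ and then closing via the Hamilton path $v_t\to v_0$ inside $H-\{v_0v_t\}$, so $H_1$ is indeed a Hamilton cycle. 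Setting $a:=|E(H)\cap\chi^{-1}(c^*)|$ and using (i),
\[
a_1^{c^*}\;=\;(t-1)+a+\mathbbm{1}[\chi(v_0v_1)=c^*]-\mathbbm{1}[\chi(v_0v_t)=c^*]\;\ge\;a+t-2.
\]

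To build $H_2$, absorb each $v_i$ ($1\le i\le t-1$) into $H$ by deleting $x_iy_i$ and inserting the length-two path $x_iv_iy_i$. Distinctness of the edges $x_iy_i$ in (iii) ensures these $t-1$ local subdivisions never conflict, so $H_2$ is a subdivision of the cycle $H$ and hence again a Hamilton cycle on $V(G_\alpha)$. By the inequality in (iii),
\[
a_2^{c^*}\;=\;a+\sum_{i=1}^{t-1}\Big(\mathbbm{1}[\chi(v_ix_i)=c^*]+\mathbbm{1}[\chi(v_iy_i)=c^*]-\mathbbm{1}[\chi(x_iy_i)=c^*]\Big)\;<\;a+s.
\]
Subtracting the two estimates gives $a_1^{c^*}-a_2^{c^*}>t-s-2$. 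Denoting the colour bias of $H_i$ by $B_i$, the single colour $c^*$ already yields $B_1\ge a_1^{c^*}-n/r$, while the identity $\sum_c(a_2^c-n/r)=0$ produces $B_2\ge(n/r-a_2^{c^*})/(r-1)$ by pigeonholing the colour surplus onto colours $c\ne c^*$. A short case split according to whether $a_1^{c^*}\ge n/r+(t-s)/2$ then furnishes a Hamilton cycle with colour bias $\ge(t-s)/2$.

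The main difficulty I anticipate is this final step: the crude bookkeeping above yields only $\max(B_1,B_2)\gtrsim (t-s)/r$, so to reach the claimed constant $(t-s)/2$ one must exploit more than just the deficit of colour $c^*$. By property (iii), forming $H_2$ gains at least $2(t-1)-s$ non-$c^*$ edges while losing at most $t-1$ non-$c^*$ edges of $H$, so the total non-$c^*$ count of $H_2$ exceeds that of $H_1$ by at least $t-s-O(1)$; pigeonholing this surplus over the $r-1$ non-$c^*$ colours (rather than on the $c^*$ deficit alone) should be the right way to recover the $(t-s)/2$ bound. A minor technical point to verify is that the simultaneous absorptions defining $H_2$ remain a Hamilton cycle when the endpoints $x_i,y_i$ repeat across different indices $i$, but the distinct-edges hypothesis in (iii), together with the fact that subdividing distinct edges of a cycle preserves $2$-regularity and connectivity, handles this.
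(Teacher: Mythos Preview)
Your two Hamilton cycles $H_1=(E(F)\cup E(H))\setminus\{v_0v_t\}$ and $H_2$ (obtained by subdividing each $x_iy_i\in E(H)$ with $v_i$), together with the comparison of their $c^*$-edge counts, are exactly what the paper does. The paper's proof ends with the one-line assertion that a $c^*$-edge difference of at least $t-s$ between $H_1$ and $H_2$ forces one of them to have colour bias $\ge(t-s)/2$, so the pigeonhole difficulty you anticipate is not something the paper resolves either---you are not missing a trick, and the extra argument you sketch is not needed to match the paper.
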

\begin{proof}
We will extend $H$ to two Hamilton cycles $H_1, H_2\subset G_\alpha$, by absorbing the vertices of $F$ in two ways, such that $H_1$ has at least $t-s$ more edges of colour $c^*$ than $H_2$. This implies that one of $H_1$ and $H_2$ must have colour bias at least $(t-s)/2$. 

On the one hand, let $H_1$ be the Hamilton cycle with $E(H_1)=E(F)\cup E(H)\setminus\{v_0v_t\}$. Then $H_1$ has at least $e(F)-2=t-1$ more $c^*$-edges than $H$, since $F$ contains $e(F)-2$ $c^*$-edges. On the other hand, by (iii) we can substitute $x_iy_i$ with a path $(x_i,v_i,y_i)$ for each $1\leq i\leq t-1$, to extend $H$ to a Hamilton cycle $H_2$. Then $H_2$ contains at most $s-1$ more $c^*$-edges than $H$, by \eqref{eq:colourbiassum}. Thus $H_1$ has at least $t-s$
more edges of colour $c$ than $H_2$, as required.
\end{proof}

In the following proof we fix 
$$t= 2^5rb\qquad\textup{and}\qquad s=t/4=8rb.$$
First let's find a colour $c^*\in [r]$ and a near-monochromatic cycle $F$, as follows.
\begin{lemma}\label{lem:mccycle}
There exists a colour $c^*\in[r]$ and a cycle $F\subset G_\alpha$ of length
$$e(F)=t+1,$$
such that all but two consecutive edges of $F$ are of colour $c^*$.
\end{lemma}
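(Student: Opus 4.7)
The plan is to exhibit a long monochromatic path in some colour $c^*$ and close it off into a cycle of length $t+1$ by attaching a single extra vertex that is a common neighbour of the two endpoints. The two non-monochromatic edges of $F$ will be the two edges incident to this new vertex and are therefore automatically consecutive along the cycle. This approach uses only the minimum degree of $G_\alpha$ and pigeonhole; \Cref{ass:111} will not be needed here.

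First, by the minimum degree condition $\delta(G_\alpha) \ge \alpha n$ we have $e(G_\alpha) \ge \alpha n^2/2$, so by pigeonhole some colour class $G^* \subseteq G_\alpha$ has at least $\alpha n^2/(2r)$ edges and hence average degree at least $\alpha n/r$. Let $c^*$ be the corresponding colour. A standard argument (every graph contains a subgraph of minimum degree at least half its average degree, and every graph of minimum degree $d$ contains a path with $d+1$ vertices) produces a $c^*$-coloured path in $G^*$ of length at least $\alpha n/(2r) \ge n/(4r)$, where the last inequality uses $\alpha = (r+1)/(2r) \ge 1/2$. Since
$$t = 2^5 r b \le 2^5 r \cdot 2^{-10} r^{-2} n = \frac{n}{32 r} < \frac{n}{4r},$$
we may truncate this to obtain a $c^*$-coloured path $P = v_1 v_2 \cdots v_t$ of length exactly $t-1$.

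Next, since $\delta(G_\alpha) \ge \alpha n = (r+1)n/(2r)$, the endpoints $v_1$ and $v_t$ satisfy
$$|N(v_1) \cap N(v_t)| \ge 2\alpha n - n = \frac{n}{r}.$$
As $|V(P)| = t \le n/(32r) < n/r$, there exists a vertex $v_0 \in (N(v_1) \cap N(v_t)) \setminus V(P)$. Setting $F = [v_0, v_1, \ldots, v_t]$ yields a cycle with $e(F) = t+1$ edges, of which the $t-1$ edges $v_i v_{i+1}$ for $1 \le i \le t-1$ lie in $G^*$ and are therefore of colour $c^*$, while the two remaining edges $v_0 v_1$ and $v_t v_0$ share the vertex $v_0$ and are hence consecutive along $F$.

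There is no real obstacle to this step: the only thing to verify is the parameter inequality $t \le n/(4r)$, which follows directly from the hypothesis $b \le 2^{-10} r^{-2} n$. The genuine difficulty of \Cref{prop:structure} is postponed to the next stage, where one must construct the second cycle $H$ together with the absorption edges in \Cref{lem:propkey}; that is where \Cref{ass:111} will be leveraged, by repeatedly producing large matchings in neighbourhoods that avoid the colour $c^*$ selected above.
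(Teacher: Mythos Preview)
Your proof is correct and follows essentially the same approach as the paper: find a long monochromatic path by pigeonhole on the colour classes, then close it into a cycle via a common neighbour of the endpoints outside the path. The only cosmetic difference is that the paper invokes the Erd\H{o}s--Gallai theorem $\textup{ex}(n,P_t)\le (t-2)n/2$ to obtain the path, whereas you use the equivalent ``pass to a subgraph of half the average degree'' argument; both yield a monochromatic path of length far exceeding $t$, and the remainder of the argument is identical.
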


\begin{proof}
First by the Erdős–Gallai Theorem on the Turán number of paths, there exists a monochromatic path of length $t$ in some colour $c^*\in [r]$ that contains more than the average number of edges. Then we will close the path into a cycle by selecting a vertex from the common neighborhood of its endpoints, which is larger than the path itself.

To be precise, there exists a colour $c^*\in[r]$ which has at least $n\cdot \alpha n\cdot r^{-1} > {n^2}/{2r}$
edges, since $\alpha>1/2$. By the Erdős-Gallai Theorem, we have
$$\textup{ex}(n,P_t)\leq\frac{(t-2)n}{2}\leq \frac{n^2}{2r},$$
since $t=2^5rb<n/r$. Therefore there exists a monochromatic path $P=(v_1,...,v_t)\subset G_\alpha$ with $v(P)=t$ and of which each edge is of colour $c^*$. Since
$$|N(v_1)\cap N(v_t)|\geq 2\alpha n-n\geq n/r > v(P),$$
there exists $v_0\in N(v_1)\cap N(v_t)$ such that $v_0\notin V(P)$. Set $F$ to be the cycle that 
$$V(F)=V(P)\cup\{v_0\}\quad \textup{and}\quad E(F)=E(P)\cup\{v_0v_1,v_0v_t\},$$ which is the cycle as required.
\end{proof}

Fix the colour $c^*\in [r]$ and the path $F$ in \Cref{lem:mccycle}. We next construct a cycle $H\subset G_\alpha\big[V(G_\alpha)\setminus \big(V(F)\setminus\{v_0, v_t\}\big)\big]$ as required in \Cref{lem:propkey}. We will find a cycle containing some given edges by the following generalization of Dirac's theorem given by P\'{o}sa (see \cite{P}). For completeness we also present the proof.

\begin{lemma}[P\'{o}sa]\label{lem:Posa}
Let $n\geq 3$ and $1\leq \ell\leq n-2$, 
let $G$ be a graph on $n$ vertices, and let $J \subset G$ be a path forest (i.e., a collection of vertex-disjoint paths) with $\ell$ edges. If
$$\delta(G) \geq \frac{n + \ell}{2},$$
then $G$ has a Hamilton cycle $H$ containing $J$ as a subgraph.
\end{lemma}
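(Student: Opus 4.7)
The plan is to adapt Pósa's classical rotation-extension technique for Dirac-type theorems so that the rotations preserve the edge set $E(J)$. The key observation is that the extra slack $\delta(G) - n/2 \geq \ell/2$ in the minimum degree precisely absorbs the at most $\ell$ forbidden rotation positions, namely those that would break edges of $J$. First I would establish the existence of \emph{some} path $P \subseteq G$ with $E(J) \subseteq E(P)$, by iteratively splicing the path-components of $J$ together: any two vertices of degree $\geq (n+\ell)/2 > n/2$ share a neighbor, which gives enough flexibility to concatenate the components. Fix such a path $P = (x_0, x_1, \ldots, x_p)$ of maximum length. For each index $i \geq 1$ with $x_0 x_i \in E(G)$ and $x_{i-1}x_i \notin E(J)$, the Pósa rotation
\begin{equation*}
P \;\longmapsto\; P' = (x_{i-1}, x_{i-2}, \ldots, x_0, x_i, x_{i+1}, \ldots, x_p)
\end{equation*}
yields a longest path still containing $J$, since the unique edge removed is $x_{i-1}x_i \notin E(J)$. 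Setting $I^* = \{i \geq 1 : x_0 x_i \in E(G),\ x_{i-1}x_i \notin E(J)\}$ and $I_p = \{j : x_j x_p \in E(G)\}$, the maximality of $P$ forces $N_G(x_0), N_G(x_p) \subseteq V(P)$, so $|I^*| \geq \deg_G(x_0) - \ell \geq (n-\ell)/2$ and $|I_p| \geq (n+\ell)/2$.

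Assume first that $P$ is spanning, that is, $p = n-1$. Both $I^*$ and $I_p + 1 := \{j+1 : j \in I_p\}$ lie inside $\{1, \ldots, n-1\}$, so
\begin{equation*}
|I^* \cap (I_p + 1)| \;\geq\; \tfrac{n-\ell}{2} + \tfrac{n+\ell}{2} - (n-1) \;=\; 1.
\end{equation*}
Picking any $i$ in this intersection, the rotated path has endpoints $x_{i-1}$ and $x_p$, which are adjacent in $G$ by construction; closing with the edge $x_{i-1}x_p$ produces a Hamilton cycle with edge set $\bigl(E(P) \setminus \{x_{i-1}x_i\}\bigr) \cup \{x_0 x_i,\, x_{i-1}x_p\}$, which contains $E(J)$ because $x_{i-1}x_i \notin E(J)$. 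This is the required cycle.

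It remains to rule out the non-spanning case $p < n-1$. The same counting inside $\{1, \ldots, p\}$ now gives $|I^* \cap (I_p+1)| \geq n - p \geq 2$, so rotating and closing yields a cycle $C$ of length $p+1$ on $V(P)$ still containing $E(J)$. Pick any $v \in V(G) \setminus V(P)$. Since $p \geq \deg_G(x_0) \geq (n+\ell)/2$, we have $|V(G) \setminus V(P)| \leq (n-\ell)/2 - 1$, so at least $\ell+2$ neighbors of $v$ lie in $V(P)$; at most $\ell - k \leq \ell-1$ of them can be internal vertices of a path of $J$ (the only places where both incident cycle edges might belong to $E(J)$). Hence some neighbor $w$ of $v$ in $V(P)$ has a cycle edge $e \notin E(J)$; deleting $e$ from $C$ and appending $v$ via $vw$ produces a path of length $p+1$ containing $E(J)$, contradicting the maximality of $P$. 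The main obstacle is that the counting inequalities are tight: the bound $|I^*| \geq (n-\ell)/2$ leaves no slack whatsoever, so one must carefully account for exactly which indices correspond to edges of $J$ throughout. The initial step of exhibiting any $J$-containing path is a minor technicality handled separately before the rotation argument begins.
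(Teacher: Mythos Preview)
The paper does not in fact prove this lemma: despite the sentence ``For completeness we also present the proof'' immediately before the statement, no proof is given and the result is simply attributed to P\'osa. So there is nothing in the paper to compare your argument against; what follows is an assessment of the proposal on its own.

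Your rotation--extension argument from Step~2 onward is correct and is the classical route. The counts $|I^*|\ge (n-\ell)/2$ and $|I_p+1|\ge (n+\ell)/2$ inside $\{1,\dots,p\}$ give exactly the intersection you need, and the extension in the non-spanning case (close to a cycle on $V(P)$, then break it at a non-$J$ edge incident to a neighbour of an outside vertex) is clean and correctly counted.

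The soft spot is Step~1. Two endpoints of $J$-components share at least $\ell$ common neighbours, but at a generic stage of the splicing the connector must avoid all of $V(J)$ together with the previously used connectors --- roughly $\ell+2k-4$ vertices when $J$ has $k$ components --- and for $k\ge 2$ this can exceed $\ell$. So the ``enough flexibility'' claim does not follow from the one-line common-neighbour bound. The simplest repair is to drop Step~1 altogether and instead take $P$ to be a longest path with the weaker property that every component of $J$ is either a subpath of $P$ or vertex-disjoint from $V(P)$; such paths exist trivially (any single component of $J$ qualifies). Your own rotation--extension machinery, with the added observation that $N(x_0)\setminus V(P)$ can only meet \emph{internal} vertices of the missing components (costing at most $\ell''-k''$ rather than $0$ in the lower bound for $|I^*|$), then forces first $E(J)\subseteq E(P)$ and then $V(P)=V(G)$, after which your closing step applies verbatim.
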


The key challenge in proving \Cref{prop:structure} is selecting an edge $x_iy_i\in E\big(G_\alpha[N(v_i)]\big)$ for each $v_i\in V(F)$, such that the colour increment inequality \eqref{eq:colourbiassum} holds. Then by \Cref{lem:Posa}, we can find a spanning cycle $H$ in $G_\alpha\big[V(G_\alpha)\setminus \big(V(F)\setminus\{v_0, v_t\}\big)\big]$ that includes all such edges $x_iy_i$. Next we introduce several definitions concerning a general colour $c\in [r]$; these will be instrumental in our arguments, and will in particular be applied to $c^*$.

\begin{definition}
For each $v\in V(G_\alpha)$, we say an edge $xy\in G_\alpha[N(v)]$ is \emph{$c$-good} for $v$ if
\begin{align}\label{eq:f}
 f_c(v,x,y):=\mathbbm{1}\big[ \chi(vx)=c \big] + \mathbbm{1}\big[ \chi(vy)=c \big] - \mathbbm{1}\big[ \chi(xy)=c \big] \leq 0.   
\end{align}    
\end{definition}

For convenience we call $xy\in E(G_\alpha)$ a \emph{$c$-edge} if it is of colour $c$, we call a matching $M\subset G_\alpha$ a \emph{$c$-matching} if all edges in it are of colour $c$, and a \emph{$c$-good matching for $v$} if all edges in $M$ are $c$-good for $v$. We also write
$$N_c(v) = \big\{ u \in V(G_\alpha) : uv \in E(G_\alpha) \,\text{ and }\, \chi(uv) = c \big\}$$
for the \emph{$c$-neighborhood of $v$}, that is, the vertices that are connected to $v$ by a $c$-edge, and also define
$$N_{\Bar{c}}(v):=N(v)\setminus N_{c}(v)\qquad\textup{and}\qquad B(v,c):=G_\alpha\big[ N_{{c}}(v),N_{\Bar{c}}(v) \big]$$
so that $B(v,c)$ is the induced bipartite subgraph of $G_\alpha[N(v)]$ with parts $N_{{c}}(v)$ and $N_{\Bar{c}}(v)$.
\begin{observation}\label{obs:cgoodcondition}
For $v\in V(G_\alpha)$ and $c\in[r]$, an edge $xy$ is $c$-good for $v$ if and only if either 
$$xy\in E\big(G_\alpha[N_{\Bar{c}}(v)]\big)\qquad \textup{or}\qquad xy\in E\big(B(v,c)\big),\, \chi(xy)=c.$$ 
\end{observation}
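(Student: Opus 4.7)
The plan is a direct unpacking of the definitions via a case analysis on where $x$ and $y$ sit with respect to the bipartition $N(v) = N_c(v) \cup N_{\bar c}(v)$. Since the observation concerns an edge of $G_\alpha[N(v)]$, both $vx$ and $vy$ are edges of $G_\alpha$ and this split is well-defined, yielding three (unordered) cases: (a) both endpoints lie in $N_{\bar c}(v)$; (b) exactly one endpoint lies in $N_c(v)$; (c) both endpoints lie in $N_c(v)$.

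In each case I would evaluate $\mathbbm{1}[\chi(vx)=c] + \mathbbm{1}[\chi(vy)=c]$ directly from the defining partition; the three sums are $0$, $1$, and $2$ respectively, so by \eqref{eq:f} the quantity $f_c(v,x,y)$ reduces to $-\mathbbm{1}[\chi(xy)=c]$, $1 - \mathbbm{1}[\chi(xy)=c]$, and $2 - \mathbbm{1}[\chi(xy)=c]$. The first expression is always $\le 0$, so every edge of case (a) is $c$-good, and these are exactly the edges of $G_\alpha[N_{\bar c}(v)]$. The second is $\le 0$ iff $\chi(xy)=c$; since the edges of case (b) are by definition those lying in $B(v,c)$, the $c$-good edges arising from case (b) are precisely the $c$-coloured edges of $B(v,c)$. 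The third expression is always $\ge 1 > 0$, so case (c) contributes no $c$-good edge. Taking the union over the three cases yields the stated characterisation, and the two conditions listed in the observation are automatically disjoint because one describes an edge inside $G_\alpha[N_{\bar c}(v)]$ while the other describes an edge of the bipartite graph $B(v,c)$.

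Since the statement is essentially a tautology from the definitions, there is no substantive obstacle. The only point that warrants a moment of care is confirming that case (c) truly cannot yield a $c$-good edge: a single $-1$ coming from $\mathbbm{1}[\chi(xy)=c]$ cannot offset two $+1$'s contributed by the endpoint indicators, so any edge between two $c$-neighbours of $v$ is ruled out. Once this is noted, the characterisation follows immediately from the three case computations above.
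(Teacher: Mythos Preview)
Your proof is correct and is essentially the same as the paper's: both perform the three-way case split according to whether $x,y$ lie in $N_c(v)$ or $N_{\bar c}(v)$, compute $f_c(v,x,y)$ in each case, and read off the characterisation. The paper's proof is slightly terser but the arguments are identical.
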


\begin{proof}
Note that
\begin{itemize}
    \item [(i)] if $xy\in E\big(G_\alpha[N_c(v)]\big)$, then $\chi(vx)=\chi(vy)=c$ and $f_c(v,x,y)\geq1$;

    \item [(ii)] if $xy\in E\big(G_\alpha[N_{\Bar{c}}(v)]\big)$, then $\chi(vx),\chi(vy)\neq c$ and $f_c(v,x,y)\leq0$;  

    \item [(iii)] if $xy\in E\big(B(v,c)\big)$, then $\mathbbm{1}[\chi(vx)=c]+\mathbbm{1}[\chi(vy)=c] = 1$, and therefore either $\chi(xy)=c$ and $f_c(v,x,y)= 0$, or $\chi(xy) \ne c$ and $f_c(v,x,y) = 1$,
\end{itemize}  
see \Cref{fig:cgood}. Since these are the only possibilities, this proves the observation.
\end{proof} 
\begin{figure}[htbp]
\centering
\begin{tikzpicture}
  \tikzset{vertex/.style={circle, fill, inner sep=0pt, minimum size=3pt}}
  \node (v) at (0,0) [vertex, label=above:$v$] {};
  \node[red] at ($(v)+(-0.8,-0.3)$) {$c$}; 
\node[blue] at ($(v)+(1.5,-0.3)$) {$[r]\setminus\{c\}$};
\draw[->, line width=0.6pt] (0,-4.7) -- (0,-4.2);
\draw[->, line width=0.6pt] (3,-1.5) -- (2.5,-2);
\node at (3.3,-1.2) {$c$-good };
\node at ($(0,-5)$) {$c$-good };
  \path[draw, line width=0.6pt, name path=leftEllipse] (-2,-3) ellipse [x radius=1cm, y radius=2cm];
  \path[draw, line width=0.6pt, name path=rightEllipse] (2,-3) ellipse [x radius=1cm, y radius=2cm];
  \path[name path=red1] (v) -- (-2,-2);
  \path[name path=red2] (v) -- (-2,-3);
  \path[name path=red3] (v) -- (-2,-4);
  \path[name path=blue1] (v) -- (2,-2);
  \path[name path=blue2] (v) -- (2,-3);
  \path[name path=blue3] (v) -- (2,-4);
  \path[name intersections={of=red1 and leftEllipse, by=l1}];
  \path[name intersections={of=red2 and leftEllipse, by=l2}];
  \path[name intersections={of=red3 and leftEllipse, by=l3}];
  \path[name intersections={of=blue1 and rightEllipse, by=r1}];
  \path[name intersections={of=blue2 and rightEllipse, by=r2}];
  \path[name intersections={of=blue3 and rightEllipse, by=r3}];
  \draw[red, line width=0.6pt] (v) -- (l1);
  \draw[red, line width=0.6pt] (v) -- (l2);
  \draw[red, line width=0.6pt] (v) -- (l3);
  \draw[blue, line width=0.6pt] (v) -- (r1);
  \draw[blue, line width=0.6pt] (v) -- (r2);
  \draw[blue, line width=0.6pt] (r3) -- (v);
  \draw[blue, line width=0.6pt] (-1.03,-2.4) -- (1.03,-2.4);
\draw[blue, line width=0.6pt] (-1,-2.7) -- (1,-2.7);
  \draw[blue, line width=0.6pt] (-1,-3) -- (1,-3);
  \draw[red, line width=0.6pt] (-1.01,-3.5) -- (1.01,-3.5);
  \draw[red, line width=0.6pt] (-1.06,-3.8) -- (1.06,-3.8);
  \path[name path=h4] (-3,-4.1) -- (3,-4.1);
  \path[name intersections={of=h4 and leftEllipse, by={a4,b4}}];
  \path[name intersections={of=h4 and rightEllipse, by={c4,d4}}];
  \draw[red, line width=0.6pt] (b4) -- (c4);
  \draw[red, line width=0.6pt] (1.55,-1.5) -- (2.55,-2.5);
  \draw[blue, line width=0.6pt] (1.55,-2) -- (2.55,-3);
  \draw[red, line width=0.6pt] (1.55,-2.5) -- (2.55,-3.5);
  \draw[blue, line width=0.6pt] (1.55,-3) -- (2.55,-4);
  \node at (-2,-5.45) {$N_{c}(v)$};
  \node at (2,-5.45) {$N_{\Bar{c}}(v)$};
\end{tikzpicture}
\caption{$c$-good edges for $v$ ~~~}\label{fig:cgood}
\end{figure}
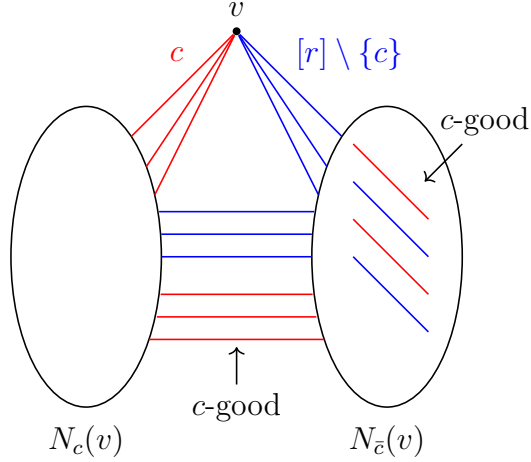

Note that to satisfy condition (iii) of \Cref{lem:propkey} for $c^*$ and $F$, it suffices to find a $c^*$-good edge for all but fewer than $s/2$ vertices in $F$, since $f_{c^*}(v,x,y)\leq 2$ for all $v,x,y\in V(G_\alpha)$. We will greedily select an edge $x_iy_i$ for each $v_i\in V(F)$. To ensure that a $c^*$-good edge for $v_i$ can be chosen disjoint from all $x_jy_j$ with $j\neq i$, as well as from any edge in $F$, it suffices to find a matching $M\subset G_\alpha[N(v)]$ of size
$$e(M)=4t\geq 3\cdot(t+1)\geq 3v(F),$$
in which every edge is $c^*$-good for $v_i$. To this end, we will identify two sets of `bad' vertices ($X_{c}, Y_{c}$ below for general $c\in [r]$, and we will apply them for $c=c^*$) and ensure that every vertex outside these sets admits a $c^*$-good matching of size $4t$.

First for $v\in V(F)$, if nearly all edges in
$G_\alpha[N(v)]$ lie within the $c^*$-neighborhood of $v$, then $v$ may fail to admit a sufficiently large $c^*$-good matching. In general for each $c\in [r]$ and $v\in V(G_\alpha)$, let $G(v,c)$ be the graph with
\begin{align}\label{eq:G(v,c)}
V\big(G(v,c)\big)=N(v)\qquad\textup{and}\qquad E\big(G(v,c)\big)=E\big(G_\alpha[N(v)]\big)\setminus E\big(G_\alpha[N_{c}(v)]\big),
\end{align}
which is the subgraph of $G_\alpha[N(v)]$ after removing all edges in its $c$-neighborhood. Then we have the following definition for $X_{c}$.

\begin{definition}\label{def:Xc}
Let $X_{c}$ be the set of vertices $v\in V(G_\alpha)$ such that every matching $M\subset G(v,{c})$ contains fewer than $8t$ edges, that is
\begin{align}\label{def:S1}
X_{c}:=\big\{v\in V(G_\alpha) : e(M)< 8t,\ \forall M\in  \cM\big(G(v,{c})\big)\big\},   
\end{align}    
where $\cM(G)$ denotes the collection of matchings in $G$.
\end{definition}

Now, for each $v \in V(F)\setminus X_{c^*}$, there is a big enough matching in $G(v,c^*)$. But if most edges in this matching lie in $B(v,c^*)$ and aren’t of colour $c^*$, there may not be enough $c^*$-good edges for $v$. To be precise, let $Y_{c}$ be the following set of vertices for $c\in [r]$.

\begin{definition}
Let $Y_{c}$ be the set of vertices $v\in V(G_\alpha)$ such that $B(v,c)$ contains a matching with at least $4t$ edges without $c$-edges, that is
\begin{align}\label{def:S2}
Y_{c}:=\big\{v\in V(G_\alpha) : \exists M\in \cM\big(B(v,c)\big),\ e(M)\geq 4t\ \textup{and}\ \chi(e)\neq c,\ \forall e\in E(M)\big\}.    
\end{align}
\end{definition}

The point of these definitions is that we can find a sufficiently large $c$-good matching in $G_\alpha[N(v)]$ for each $v\in V(G_\alpha)\setminus(X_{c}\cup Y_{c})$. We record this simple but important fact in the following observation.

\begin{observation}\label{ob:1}
For each $v\in V(G_\alpha)$ and $c\in [r]$, there
exists a matching in $G_\alpha[N(v)]$ with at least $4t$ edges. Moreover, for each $v\in V(G_\alpha)\setminus(X_{c}\cup Y_{c})$, there exists a $c$-good matching in $G_\alpha[N(v)]$ which contains at least $4t$ edges.
\end{observation}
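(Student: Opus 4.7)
The observation splits naturally into two independent claims, and my plan is to dispatch both by unwinding the definitions of $X_{c}$ and $Y_{c}$ against the characterisation of $c$-good edges in \Cref{obs:cgoodcondition}. For the first claim I would simply quote \Cref{ass:111}: since $t = 2^{5} r b$, the assumption already gives, for every $v$ and every $c$, a matching in $G_\alpha[N(v)]$ of size at least $2^{9} r b = 16 t$, which is far more than $4t$; no use of the exclusion $v \notin X_{c}\cup Y_{c}$ is needed here.

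For the second claim I would run a peeling argument on a large matching in $G(v,c)$. Because $v \notin X_{c}$, the definition of $X_{c}$ supplies a matching $M \subset G(v,c)$ with $e(M)\geq 8t$. By the very definition of $G(v,c)$, its edge set is $E\bigl(G_\alpha[N_{\Bar{c}}(v)]\bigr) \cup E\bigl(B(v,c)\bigr)$, so I may partition $M = M_1 \cup M_2$ with $M_1 \subset E\bigl(G_\alpha[N_{\Bar{c}}(v)]\bigr)$ and $M_2 \subset E\bigl(B(v,c)\bigr)$. Now \Cref{obs:cgoodcondition} tells me that every edge of $M_1$ is $c$-good for $v$, while an edge of $M_2 \subset B(v,c)$ is $c$-good for $v$ iff it has colour $c$. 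Writing $M_2''$ for the sub-matching of $M_2$ consisting of the non-$c$ edges, the non-$c$-good edges of $M$ are exactly $M_2''$, which is a matching in $B(v,c)$ with no $c$-edge. Invoking $v \notin Y_{c}$, the definition of $Y_{c}$ forces $e(M_2'') < 4t$, so $M \setminus M_2''$ is a $c$-good matching for $v$ in $G_\alpha[N(v)]$ of size at least $8t - (4t-1) > 4t$.

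I do not expect any real obstacle: the thresholds $8t$ in the definition of $X_{c}$ and $4t$ in the definition of $Y_{c}$ are tuned precisely so that this single peeling step leaves at least $4t$ $c$-good edges, and \Cref{obs:cgoodcondition} makes the decomposition of $M$ along $G_\alpha[N_{\Bar{c}}(v)]$ and $B(v,c)$ automatic. The only care required is to invoke the two exclusions $v \notin X_{c}$ and $v \notin Y_{c}$ in the right order, supplying the large matching first and then bounding its non-$c$-good part afterwards.
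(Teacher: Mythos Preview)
Your proposal is correct and follows essentially the same argument as the paper's proof: invoke \Cref{ass:111} for the first claim, and for the second claim use $v\notin X_{c}$ to obtain a matching $M\subset G(v,c)$ of size at least $8t$, then use $v\notin Y_{c}$ to bound the non-$c$-good edges of $M$ (those in $B(v,c)$ not coloured $c$) by fewer than $4t$, leaving at least $4t$ $c$-good edges. The only cosmetic difference is that you make the partition $M=M_1\cup M_2$ and the appeal to \Cref{obs:cgoodcondition} explicit, whereas the paper phrases the same subtraction more tersely.
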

\begin{proof}
First by \Cref{ass:111}, for each $v\in V(G_\alpha)$ there exists a matching $M\subset G[N(v)]$ that has at least $2^9rb>4t$ edges and no edges of colour $c$. Next for each $v\in V(G_\alpha)\setminus(X_{c}\cup Y_{c})$, by \eqref{def:S1} there exists a matching $M\subset G(v,c)$ with
$$e(M)\geq 8t,$$
since $v\notin X_{c}$. And by \eqref{def:S2} every matching $M\subset B(v,c)$ without $c$-edges satisfies
$$e(M)< 4t,$$
since $v\notin Y_{c}$. This means $M$ contains at most $4t$ edges not of colour $c$ in $B(v,c)$, and all the other edges of $M$ are $c$-good for $v$. Hence there is a $c$-good matching $M'\subset M\subset G(v,c)$ such that
\begin{align*}
e(M')\geq e(M)-4t\geq 8t-4t\geq 4t, 
\end{align*}
as required.
\end{proof}

By Observation~\ref{ob:1}, to bound the left hand side of \eqref{eq:colourbiassum} for $F$ and $c^*$, it only remains to bound $|X_{c^*}|$ and $|Y_{c^*}|$. We will use the following lemma to show if either $X_{c^*}$ or $Y_{c^*}$ contains $s/4$ vertices, then $G_\alpha$ contains a Hamilton cycle with $b$ colour bias. 

\begin{lemma}\label{lem:Xc*Yc*}
If there exists $c\in [r]$ such that either 
$$\big|X_{c}\big|\geq s/4\qquad\textup{or}\qquad \big|Y_{c}\big|\geq s/4,$$
then there exists a Hamilton cycle in $G_\alpha$ with colour bias at least $b$.
\end{lemma}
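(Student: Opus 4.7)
The strategy is to apply \Cref{lem:propkey} with $c^*:=c$; since $t=32rb$ and $s=8rb$, it delivers a Hamilton cycle of colour bias at least $(t-s)/2=12rb\geq b$. Hence in each case it suffices to construct a cycle $F\subset G_\alpha$ of length $t+1$ whose edges are all of colour $c$ except for two consecutive edges, a disjoint cycle $H$, and absorbing edges $\{x_iy_i\}\subset E(H)$ realising \eqref{eq:colourbiassum}.

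In Case~1 ($|X_c|\geq s/4$), I first show that every $v\in X_c$ has $|N_c(v)|\geq n/(2r)$. Indeed, \Cref{ass:111} applied to $v$ with colour $c$ supplies a matching in $G_\alpha[N(v)]$ of size at least $2^9rb=16t$ containing no $c$-edges, and the definition of $X_c$ forces at least $8t$ of these edges to lie inside $G_\alpha[N_c(v)]$; combined with $d(v)\geq\alpha n$ and the independence-number bound $\alpha(G_\alpha)\leq(1-\alpha)n$ (forced by $\delta(G_\alpha)\geq\alpha n$), this yields $|N_c(v)|\geq n/(2r)$. Using this linear $c$-degree on $|X_c|\geq 2rb$ vertices together with a P\'{o}sa-rotation argument inside the colour-$c$ subgraph rooted at $X_c$, I produce a $c$-monochromatic path of length $t$; since $\delta(G_\alpha)\geq\alpha n>n/2$ its endpoints share a common neighbour outside the path, closing it to the cycle $F$. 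The disjoint cycle $H$ is obtained via \Cref{lem:Posa} on $G_\alpha$ restricted to $V(G_\alpha)\setminus(V(F)\setminus\{v_0,v_t\})$, routed through absorbing edges $x_iy_i$ chosen to be $c$-good (via \Cref{ob:1}) whenever $v_i\notin X_c\cup Y_c$, and from matchings inside $G_\alpha[N_c(v_i)]$ for $v_i\in X_c$ (each contributing at most $2$ to $\sum f_c$); choosing $F$ with at most $(s-1)/2$ vertices in $X_c\cup Y_c$ keeps $\sum_i f_c(v_i,x_i,y_i)<s$, so \Cref{lem:propkey} applies.

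Case~2 ($|Y_c|\geq s/4$) is symmetric: the non-$c$ matching of size $\geq 4t$ in $B(v,c)$ for each $v\in Y_c$ implies $|N_c(v)|\geq 4t$, so the same P\'{o}sa-rotation inside the colour-$c$ subgraph, rooted now at $Y_c$, produces $F$. The absorbing edges at $v_i\in Y_c$ are drawn from the $B(v_i,c)$-matching (each contributing only $f_c=1$), and elsewhere as in Case~1 via \Cref{ob:1}, maintaining $\sum_i f_c<s$.

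The main obstacle is the construction of the near-$c$-monochromatic cycle $F$ of length exactly $t+1$: colour $c$ need not be the densest colour in $G_\alpha$, so the Erd\H{o}s--Gallai argument of \Cref{lem:mccycle} does not apply directly. The P\'{o}sa-rotation inside the colour-$c$ subgraph, rooted at the $c$-heavy vertices provided by $X_c$ or $Y_c$, is the key technical step; once $F$ is in hand, the remaining greedy disjoint selection of $H$ and the absorbing edges is straightforward since the relevant matchings (of size $\geq 4t$ or $\geq 8t$) far exceed $v(F)\leq t+1$.
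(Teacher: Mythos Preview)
Your proposal has a genuine gap at precisely the point you yourself flag as the ``main obstacle'': the construction of the near-$c$-monochromatic cycle $F$ of length $t+1$. The P\'osa-rotation sketch does not go through, because the only structural information you have extracted about the colour-$c$ subgraph is that the $s/4=2rb$ vertices of $X_c$ (respectively $Y_c$) have $c$-degree $\Omega(n/r)$ (respectively $\geq 4t$). Nothing prevents the colour-$c$ subgraph from being bipartite with one side equal to $X_c$. Concretely, suppose every $c$-edge has exactly one endpoint in $X_c$; for $v\in X_c$ one can still have $|N_c(v)|\approx\alpha n$ and $|N_{\bar c}(v)|<8t$ (forcing $v\in X_c$), while all edges inside $N_c(v)\subset V\setminus X_c$ are non-$c$ (so \Cref{ass:111} is satisfied). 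In such a configuration every $c$-monochromatic path has at most $2|X_c|+1=4rb+1<t$ vertices, so no cycle $F$ of the required length exists. A P\'osa rotation cannot rescue this: rotations preserve the vertex set of the path, and here the path simply cannot be long enough.

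There is also an internal tension in your argument even granting $F$: you propose to build $F$ from the $c$-heavy vertices in $X_c$, yet in the same breath require $F$ to contain at most $(s-1)/2$ vertices of $X_c\cup Y_c$ in order to control $\sum_i f_c(v_i,x_i,y_i)$. For $v_i\in X_c$ the only absorbing edges you can guarantee lie in $G_\alpha[N_c(v_i)]$, where $f_c\geq 1$, so if many $v_i$ lie in $X_c$ the sum blows up; but if few do, you have lost the leverage needed to build $F$ in the first place.

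The paper avoids this entirely by abandoning \Cref{lem:propkey} for this lemma and using a separate ``bowtie'' mechanism. For $|X_c|\geq s/4$ it builds, greedily, $s/4$ vertex-disjoint bowties each centred at some $v\in X_c$, with all four centre-edges of colour $c$ and the two side-edges of different colours (this uses exactly the large non-$c$ matchings inside $G_\alpha[N_c(v)]$ that you identified). For $|Y_c|\geq s/4$ it builds a different type of bowtie using the large non-$c$ matching in $B(v,c)$. In either case, \Cref{lem:Posa} routes a Hamilton cycle of $G_\alpha\setminus\{\text{centres}\}$ through all side-edges, and absorbing each centre via one triangle or the other yields two Hamilton cycles differing by $\Omega(s)$ in some colour count. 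The key point is that this construction is purely local to each $v\in X_c$ (or $Y_c$) and never requires a long monochromatic path.
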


The proof of \Cref{lem:Xc*Yc*} is technical and will be deferred to the next subsection. For now, we assume \Cref{lem:Xc*Yc*} holds and combine it with \Cref{lem:propkey,lem:mccycle,lem:Posa,} and \Cref{ob:1} to prove \Cref{prop:structure}.

\begin{proof}[Proof of \Cref{prop:structure}]
Suppose that the conclusion of the proposition fails to hold, and note that in this case \Cref{ass:111} holds; that is,  for each $c\in [r]$ and $v\in V(G_\alpha)$, there exists a matching $M\subset G_\alpha[N(v)]$ with at least $2^9rb$ edges that has no $c$-edges. We claim that there exists a Hamilton cycle with $b$ colour bias in $G_\alpha$. By \Cref{lem:propkey}, in order to show this it will suffice to find a colour $c^*\in [r]$ and two cycles $F$ and $H$ with 
$$t= 2^5rb\qquad\textup{and}\qquad s=t/4=8rb,$$
since then there exists a Hamilton cycle with $(t-s)/2\geq b$ colour bias in $G_\alpha$.

By \Cref{lem:mccycle}, there exists $c^*\in [r]$ and a cycle $F=[v_0,...,v_t]$ in $G_\alpha$ with length $t+1$, and all edges but $\{v_0v_1,v_0v_t\}$ of $F$ are of colour $c^*$. Let's partition $V(F)=S_1\cup S_2\cup S_3$
\begin{align*}
S_1=X_{c^*}\cap V(F),\quad S_2=\big(Y_{c^*}\setminus X_{c^*}\big)\cap V(F),\quad\textup{and}\quad S_3=V(F)\setminus (S_1\cup S_2).  
\end{align*} 
Let's find an edge in $G_\alpha[N(v_i)]$ for each $v_i\in V(F)$ greedily. On the one hand, by \Cref{ob:1}, for each $v\in S_1\cup S_2$ there exists a matching in $G_\alpha[N(v)]$ with at least $4t$ edges, and for each $v\in S_3$ we can find a matching in $G_\alpha[N(v)]$ in which every edge is $c^*$-good. On the other hand, we just need to avoid at most 
$$3v(F)\leq 3\cdot(t+1)<4t$$
choices for each $v_i$ such that the selected $x_iy_i$ is disjoint from all $x_jy_j$ with $j<i$, as well as from any edge in $F$. Thus by greedily choosing one edge in the corresponding matching for each $v_i\in V(F)$, we get a collection of vertex-disjoint edges 
$$\{x_0y_0,x_1y_1,...,x_ty_t\},$$
in which $x_iy_i$ is $c^*$-good for $v_i$ if $v_i\in S_3$. 

Let's find a spanning cycle $H\subset G':=G_\alpha\big[V(G_\alpha)\setminus \big(V(F)\setminus\{v_0,v_t\}\big)\big]$. Denote 
$$T=\{v_0v_t\}\cup \{x_1y_1,...,x_{t-1}y_{t-1}\},$$
and then $|T|=t$. By \Cref{lem:Posa}, there exists a spanning cycle $H\subset G'$ such that $T\subset E(H)$, since $t \le n/4r$, and hence 
$$\delta(G') \ge \alpha n-v(F)\geq n/2+n/2r-t\geq n/2 + |T|.$$

It remains to check \eqref{eq:colourbiassum} in (iii). Observe that the function $f_{c^*}(v_i,x_i,y_i)$ in \eqref{eq:f} is at most 2 for $v_i\in S_1\cup S_2$, and non-positive for $v_i\in S_3$ since the edge $x_iy_i$ is $c^*$-good for $v_i$. Then either the sum in \eqref{eq:colourbiassum} satisfies
$$\sum_{1\leq i\leq t-1}f_{c^*}(v_i,x_i,y_i)\leq 2\big(|S_1| + |S_2|\big)< s,$$
as required, or 
$$|X_{c^*}| + |Y_{c^{*}}| \geq |S_1| + |S_2| \ge s/2,$$
in which case \Cref{lem:Xc*Yc*} implies that there exists a Hamilton cycle in $G_\alpha$ with colour bias at least $b$, as required. 
\end{proof}

\medskip

\subsection{Proof of \Cref{lem:Xc*Yc*}}\label{section 4.2}
In this subsection, we will prove \Cref{lem:Xc*Yc*} to complete the proof of \Cref{prop:structure}. Recall that $G_\alpha\in \G_\alpha(n)$ for $\alpha=(r+1)/2r$, and we choose 
$$t=2^5r\cdot b\qquad \textup{and} \qquad s=t/4$$
for some $1\leq b\leq 2^{-10} r^{-2} n$. By \Cref{ass:111}, for each $v\in V(G_\alpha)$ and each colour $c\in [r]$, there exists a matching $M\subset G_\alpha[N(v)]$ with at least 
$$2^9r\cdot b=16t$$
edges that has no edges of colour $c$. If $X_c$ (or $Y_c$) contains at least $s/4$ vertices, we may construct a collection of vertex-disjoint \emph{bowties} using vertices from $X_c$ (or $Y_c$). These bowties help us to make a Hamilton cycle with $b$ colour bias. We now define bowties precisely.

\begin{definition}
A subgraph $B\subset G$ is called a \emph{bowtie} if it consists of two triangles that intersect only at a single vertex $z$, referred to as the \emph{center}. An edge $e\in E(B)$ is a \emph{center-edge} if it has 
$z$ as an endpoint; otherwise, it is called a \emph{side-edge}.    
\end{definition}

Let $\cB$ be a collection of vertex-disjoint bowties in $G_\alpha$, and let $B_0 \subset V(G_\alpha)$ denote the set of centers of the bowties in $\cB$. By \Cref{lem:Posa}, there exists a Hamilton cycle $H_0\subset G_\alpha[V(G_\alpha)\setminus B_0]$, which contains both side-edges of each bowtie in $\cB$ (see \Cref{fig:bowtie}). To extend $H_0$ to a Hamilton cycle of $ G_\alpha $, we absorb each vertex $ v \in B_0 $ by replacing one of the two side-edges of the corresponding bowtie with the two center-edges connecting $v$ to $H_0$. The choice of side-edges thus influences the colour bias of the resulting Hamilton cycle, depending on the colours of the side-edges and the center-edges used in the substitution. We next consider two different cases, corresponding to $|X_c|\geq s/4$ and $|Y_c|\geq s/4$ respectively.

\begin{figure}[htbp]
\centering
\begin{tikzpicture}[scale=1.2, every node/.style={circle, draw, fill=white, inner sep=1pt}]
\begin{scope}[rotate=90]

\def\n{4}

\foreach \i in {0,...,3} {
    \pgfmathsetmacro{\y}{-\i*2}
    
    \node (c\i) at (0,\y) {};
    \node (a\i) at (-1,\y+0.6) {};
    \node (b\i) at (-1,\y-0.6) {};
    \node (d\i) at (1,\y+0.6) {};
    \node (e\i) at (1,\y-0.6) {};
    \draw[very thick]  (a\i) -- (b\i) -- (c\i) -- (a\i);
    \draw[very thick]  (d\i) -- (e\i) -- (c\i) -- (d\i);
}
\draw[rounded corners=35pt] (-1,2) rectangle (1,-8);
\end{scope}
\end{tikzpicture}
\caption{A cycle containing the side-edges of bowties ~~~}\label{fig:bowtie}
\end{figure}
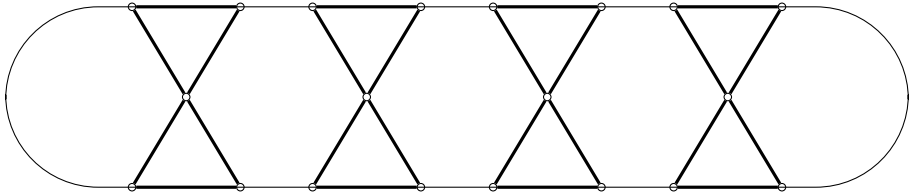

First, if there exists a colour $c \in [r]$ such that $|X_{c}|\geq s/4$, we can find a collection of vertex-disjoint bowties of \emph{$c$-type I} containing the vertices in $|X_c|$, defined as follows.
 
\begin{definition}
A bowtie $B$ is of \emph{$c$-type I} for $c\in [r]$, if it has 4 center-edges of colour $c$, and 2
side-edges whose colours are distinct from each other.
\end{definition}

Note that for each $ v \in X_c $ and each colour $ c' \in [r] $, by \Cref{ass:111}, the graph $ G_\alpha[N(v)] $ contains a matching of size at least $ 16t $ that avoids $ c' $-coloured edges. Moreover, such a matching includes at least $ 8t $ edges within the $ c $-neighborhood of $ v $, since any matching in $ G(v, c) $ (i.e., $ G_\alpha[N(v)] $ outside the $ c $-neighborhood of $ v $; see \eqref{eq:G(v,c)}) contains at most $ 8t $ edges by Definition~\ref{def:Xc}. Hence, we have sufficient flexibility to greedily construct a collection of vertex-disjoint bowties of $ c $-Type~I based on $ X_c $. To be precise, we have the following lemma.

\begin{lemma}\label{lem:bowtie01}
Let $c\in [r]$. If
$$|X_c|\geq s/4,$$
then there exists a collection of $s/4$ vertex-disjoint bowties of $c$-type I. 
\end{lemma}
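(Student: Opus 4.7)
The plan is a greedy construction. I will first fix an arbitrary subset $X = \{u_1, \ldots, u_{s/4}\} \subset X_c$ of size $s/4$, and then build the bowties one at a time with $u_i$ as the center of the $i$-th bowtie. To ensure that the $s/4$ pre-selected centers survive the whole procedure, I will force the four non-center vertices of every bowtie to lie outside $X$; together with disjointness from the previously used non-center vertices, this will make the full collection of bowties vertex-disjoint.

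The structural fact driving each step is the following: for every $v \in X_c$ and every colour $c' \in [r]$, the graph $G_\alpha[N_c(v)]$ contains a matching of size at least $8t$ with no edge of colour $c'$. Indeed, \Cref{ass:111} applied to $v$ and $c'$ provides a matching in $G_\alpha[N(v)]$ of size at least $2^9 rb = 16t$ avoiding $c'$, and because $v \in X_c$, the definition \eqref{def:S1} forces fewer than $8t$ of its edges to lie in $G(v,c)$ (i.e.\ to have an endpoint in $N_{\bar c}(v)$); the remaining at least $8t$ edges form a matching inside $N_c(v)$.

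At step $i$, set $W_i = X \cup U^{-}_{i-1}$, where $U^{-}_{i-1}$ is the set of non-center vertices of the first $i-1$ bowties, so that $|W_i| \leq s/4 + 4(i-1) < 5s/4 = 5t/16$. Applying the structural fact to $v = u_i$ with $c' = c$ produces a matching $M_1 \subset G_\alpha[N_c(u_i)]$ with $\geq 8t$ edges, none of colour $c$. Since $M_1$ is a matching, at most $|W_i| < 5t/16$ of its edges touch $W_i$, so the sub-matching $M_1 \setminus W_i$ has more than $7t$ edges. If two of these edges have different colours, pick them as $e_1, e_2$; they are automatically vertex-disjoint. Otherwise all edges of $M_1 \setminus W_i$ share a single colour $c_1 \neq c$; apply the structural fact once more with $c' = c_1$ to get a matching $M_2 \subset G_\alpha[N_c(u_i)]$ of size $\geq 8t$ avoiding $c_1$, restrict to $M_2 \setminus W_i$ which still has $> 7t$ edges, take any $e_2$ from it (so $\chi(e_2) \neq c_1$), and take $e_1$ of colour $c_1$ from the more than $7t - 2$ edges of $M_1 \setminus W_i$ vertex-disjoint from $e_2$. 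Writing $e_1 = x_1y_1$, $e_2 = x_2y_2$, the two triangles $u_i x_1 y_1$ and $u_i x_2 y_2$ intersect only at $u_i$; the four center-edges have colour $c$ since $x_1,y_1,x_2,y_2 \in N_c(u_i)$, and the two side-edges $e_1, e_2$ have distinct colours by construction, yielding a bowtie of $c$-type I centered at $u_i$ that is vertex-disjoint from the previous $i-1$ bowties.

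The only subtle point is a bookkeeping one: we must prevent later iterations from using up the pre-selected centers in $X$. Including $X$ inside the forbidden set $W_i$ takes care of this for free, at negligible cost, because $|X| = s/4 = t/16$ and $4(i-1) \leq s - 4 < t/4$ are both tiny compared with the $\geq 8t$ edges provided by the structural fact. The inequalities $8t - 5t/16 > 7t \gg 2$ hold with enormous slack under the hypothesis $b \leq 2^{-10}r^{-2}n$, so the greedy procedure runs for all $s/4$ steps and produces the required collection of $s/4$ vertex-disjoint bowties of $c$-type I.
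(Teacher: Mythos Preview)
Your proof is correct and follows essentially the same greedy strategy as the paper: use \Cref{ass:111} together with membership in $X_c$ to produce large $c'$-free matchings inside $G_\alpha[N_c(u_i)]$, then pick two side-edges of distinct colours while avoiding previously used vertices. Your version is in fact slightly more careful than the paper's, since you explicitly include the pre-selected centers $X$ in the forbidden set $W_i$ (the paper absorbs this into the bound $5|X_c|$ without comment), and your case split on whether $M_1\setminus W_i$ is monochromatic is harmless though unnecessary---the paper simply always invokes the structural fact a second time with $c'=c_1$.
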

\begin{proof}
We may assume $X_c=\{x_1,...,x_{s/4}\}$, and then we will find a bowtie of $c$-type I with center $x_i$ for all $x_i\in X_c$ greedily. For each $x\in X$, there exists a matching $M_1\subset G_\alpha[N_{c}(x)]$ with at least 
\begin{align}\label{eq:qwer}
e(M_1)\geq 16t-8t\geq 8t   
\end{align}
edges which doesn't have $c$-edges. Indeed, by \Cref{ass:111}, there is a matching of size $ 16t $ in $ G $ avoiding colour $ c $, and by \eqref{def:S1}, at most $ 8t $ of its edges lie outside $ G_\alpha[N_c(x)] $. Hence, at least $ 8t $ such edges remain within $ G_\alpha[N_c(x)] $, forming $ M_1 $. Choose an arbitrary edge $ e_1 \in M_1 $, and let $ c_1 $ be its colour. Applying the same reasoning, there exists another matching $ M_2 \subset G_\alpha[N_c(x)] $ consisting of edges that avoid colour $ c_1 $, with
$$e(M_2) \geq 8t,$$
by \Cref{ass:111} and \eqref{def:S1} again. Selecting an edge $ e_2 \in M_2 $ which is disjoint with $e_1$, we obtain a $ c $-type I bowtie centered at $ x $, with side-edges $ e_1 $ and $ e_2 $. Thus, for each $ x \in X_c $, there are at least 
$$ \min\{e(M_1),\ e(M_2)-2\}\geq 8t-2$$
such bowties can be chosen.

We now construct, greedily, a family of vertex-disjoint $ c $-type I bowties centered at vertices $ x_i \in X_c $, for $ 1 \leq i \leq s/4 $. To ensure the bowtie centered at $ x_i $ is disjoint from all previously chosen bowties (with centers $ x_j $ for $ j < i $), it suffices to avoid at most
$$5 \cdot |X_c| \leq 5 \cdot s/4 \leq 2s $$
choices. Since $ 2s \leq 8t-2 $, the greedy selection ensures a vertex-disjoint bowtie can be found for every $ x_i \in X_c $.
\end{proof}

We claim if there exists $s/4$ vertex-disjoint bowties of $c$-type I in $G_\alpha$, then it contains a Hamilton cycle with $b$ colour bias.

\begin{lemma}\label{lem:bowtie1}
Let $0\leq\ell\leq 2^{-5}n$. If $G_\alpha$ contains at least $\ell$ vertex-disjoint bowties of \emph{$c$-type I}, then it has a Hamilton cycle with colour bias at least $\ell/2r$.  
\end{lemma}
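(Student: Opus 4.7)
The plan is to form a Hamilton cycle of $G_\alpha$ in two stages: first build a Hamilton cycle $H_0$ on $V(G_\alpha)\setminus B_0$, where $B_0=\{z_1,\dots,z_\ell\}$ is the set of bowtie centres, that contains both side-edges of every bowtie, and then absorb each centre $z_i$ by replacing one of the two side-edges of its bowtie by the two centre-edges of the triangle in which that side-edge lies. The key observation is that all four centre-edges of a $c$-type~I bowtie have colour $c$ while the two side-edges have distinct colours $c_i^1\neq c_i^2$, so the choice of which side-edge to replace in each bowtie gives a family of $2^\ell$ Hamilton cycles of $G_\alpha$, and for any single prescribed colour $\gamma$ we can always arrange for every removed side-edge to avoid $\gamma$. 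To construct such an $H_0$, note that $\delta\big(G_\alpha[V(G_\alpha)\setminus B_0]\big)\ge\alpha n-\ell=(r+1)n/(2r)-\ell$ exceeds $(n+\ell)/2$ whenever $\ell\le n/(3r)$, which is implied by the hypothesis $\ell\le 2^{-5}n$ in the intended range of parameters, and the $2\ell$ side-edges are pairwise vertex-disjoint and hence form a path forest; \Cref{lem:Posa} then produces $H_0$.

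For a selection $\sigma$ of one side-edge per bowtie, let $H^\sigma$ denote the Hamilton cycle of $G_\alpha$ obtained by removing those $\ell$ side-edges from $H_0$ and adding, for each bowtie, the two centre-edges of the triangle through $z_i$ whose side-edge was removed. The colour bias of $H^\sigma$ is now controlled by a case split on $N_c(H_0)$. Picking $\sigma$ so that every removed side-edge avoids $c$ (possible since $c_i^1\neq c_i^2$) gives $N_c(H^\sigma)=N_c(H_0)+2\ell$, as no $c$-edges are removed and $2\ell$ are added. If $N_c(H_0)\ge n/r-(4r-1)\ell/(2r)$, this already yields $N_c(H^\sigma)\ge n/r+\ell/(2r)$ and we are done. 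Otherwise $\sum_{c'\neq c}N_{c'}(H_0)=(n-\ell)-N_c(H_0)>n(r-1)/r+(2r-1)\ell/(2r)$, so a pigeonhole over the $r-1$ remaining colours produces some $c'\neq c$ with
\[
N_{c'}(H_0)\;\ge\;\frac{n}{r}+\frac{(2r-1)\ell}{2r(r-1)}\;\ge\;\frac{n}{r}+\frac{\ell}{2r}.
\]
For this $c'$, choose $\sigma$ instead so that every removed side-edge avoids $c'$; since all added centre-edges have colour $c\neq c'$ and no $c'$-edge is removed, $N_{c'}(H^\sigma)=N_{c'}(H_0)\ge n/r+\ell/(2r)$.

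The hard part is arranging the flexibility at the Pósa step so that a single Hamilton cycle $H_0$ contains both side-edges of every bowtie, thereby enabling us to switch between the $c$-strategy and the $c'$-strategy within the case analysis; once this is achieved, the remainder is a short pigeonhole argument that uses only the combinatorial fact $c_i^1\neq c_i^2$ intrinsic to the definition of a $c$-type~I bowtie.
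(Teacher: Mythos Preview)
Your proof shares the paper's skeleton---build a Hamilton cycle $H_0$ on $V(G_\alpha)\setminus B_0$ via P\'osa's lemma that contains both side-edges of every bowtie, then absorb each centre through one of its two triangles---but the endgame is genuinely different. The paper does not perform your case split on $N_c(H_0)$. Instead it first pigeonholes on the side-edge colours: some $c'\in[r]$ occurs as a side-edge in at least $\ell/r$ of the bowties. Restricting to just those $\ell/r$ bowties, the two Hamilton cycles obtained by always absorbing through the $c'$-triangle versus always through the opposite triangle differ by exactly $\ell/r$ in their number of $c'$-edges, so one of them has bias at least $\ell/(2r)$. This is shorter than your direct-counting case analysis and, crucially, it uses only $\ell/r$ bowties in the P\'osa step.

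That last point exposes a small gap in your argument. With all $\ell$ centres removed and a $2\ell$-edge path forest prescribed, P\'osa requires $(r+1)n/(2r)-\ell\ge (n+\ell)/2$, i.e.\ $\ell\le n/(3r)$; the stated hypothesis $\ell\le 2^{-5}n$ does \emph{not} imply this once $r\ge 11$, so your proof does not cover the lemma as stated for large $r$. (Your hedge ``in the intended range of parameters'' is correct in the sense that the only application downstream has $\ell=s/4\le 2^{-9}r^{-1}n\le n/(3r)$, but the lemma itself is stronger.) The paper's initial pigeonhole neatly sidesteps this: with only $\ell/r$ centres removed the P\'osa condition becomes $\ell\le n/3$, which holds for all $r\ge 2$ under $\ell\le 2^{-5}n$. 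An easy fix on your side is either to pigeonhole first as the paper does, or to simply replace the hypothesis by $\ell\le n/(3r)$.
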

\begin{proof}
By the pigeonhole principle, there exists a colour $c'\in[r]$ ($c'$ could be $c$), such that the corresponding subset of bowties, denoted $\cB_{c'}$, contains at least
$\ell/r$ vertex-disjoint bowties, each featuring a side-edge of colour $c'$. We may assume $|\cB_{c'}|=\ell/r$. Let $Z_{c'}\subset V(G_\alpha)$ be the set of vertices contained in some bowtie in $\cB_{c'}$, then $$|Z_{c'}|=5\cdot |\cB_{c'}|=5\ell/r.$$

Next by \Cref{lem:Posa}, there exists a spanning cycle $H\subset G_\alpha[V(G_\alpha)\setminus Z_{c'}]$ which contains the side edges of every bowtie in $\cB_{c'}$, since the minimum degree of $G_\alpha[V(G_\alpha)\setminus Z_{c'}]$ is at least 
$$\delta(G_\alpha)-|Z_{c'}|\geq \frac{n}{2}+\frac{n}{2r}-\frac{5\ell}{r}\geq \frac{n}{2} + \frac{5\ell}{r} =\frac{n}{2}+|Z_{c'}|,$$
where the second inequality holds since $n/2r>10\ell/r$, by $\ell\leq2^{-5}n$.

We can extend $H$ by adding the centers of bowties to $H$ in 2 ways: Either remove the side edge of $c'$ from the bowtie and incorporate the two center edges adjacent to it into $E(H)$, or perform the same operation for the opposite side edge. Let $H_1$ be the Hamilton cycle obtained by extending 
$H$ while removing the side edge of 
$c'$ from each bowtie in $\cB_{c'}$. Similarly, let $H_2$ be the Hamilton cycle extending $H$ while removing the side-edge opposite to the 
$c'$-side in each bowtie of $\cB_{c'}$. Note that $H_1$ has $\ell/r$ more edges of $c'$ than $H_2$, which implies a Hamilton cycle with colour bias at least $\ell/2r$.
\end{proof}

Note then \Cref{lem:Xc*Yc*} follows by \Cref{lem:bowtie01,lem:bowtie1} easily, if we suppose $|X_c|\geq s/4$. Next let's consider the case where $|Y_c|\geq s/4$. We will identify another collection of vertex-disjoint bowties containing vertices 
$v\in Y_c$, which are different from $c$-type I.

\begin{definition}
A bowtie $B$ is of \emph{$c$-type II} for $c\in [r]$, if it has two
side-edges of distinct colours, one of which is a $c$-edge $e$, and both center edges incident to $e$ are not of colour $c$.
\end{definition}

We next find a collection of vertex-disjoint bowties of $c$-type II based on $Y_c$.

\begin{lemma}\label{lem:bowtie02}
If there exists $c\in [r]$ such that 
$$|Y_c|\geq s/4,$$
then there exists at least $s/4$ vertex-disjoint bowties of $c$-type II.
\end{lemma}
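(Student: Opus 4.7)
My approach is to build the $s/4$ bowties greedily. The key observation is that each bowtie should \emph{not} be centered at the vertex $v\in Y_c$ itself but at a non-$c$-neighbour $y\in N_{\bar c}(v)$. The edges supplied by the definition of $Y_c$ lie in $B(v,c)$ and are not coloured $c$, so the natural $c$-coloured edge near $v$ is of the form $vx$ with $x\in N_c(v)$; arranging for $vx$ to be a \emph{side-edge} of the bowtie (rather than a centre-edge) forces $v$ and $x$ to play the role of wings, while a common neighbour $y$ of them becomes the centre.

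Concretely, I would first fix $V^*\subseteq Y_c$ with $|V^*|=s/4$ (possible since $|Y_c|\ge s/4$) and process its vertices one by one. For the current $v\in V^*$, the definition of $Y_c$ supplies a matching $M_v\subseteq B(v,c)$ of size at least $4t$ with no $c$-edges; pick $xy\in M_v$ with $x\in N_c(v)$ and $y\in N_{\bar c}(v)$. Next, \Cref{ass:111} applied to $y$ and $c$ supplies a matching $M'\subseteq G_\alpha[N(y)]$ of size at least $16t=2^9rb$ with no $c$-edges; pick an edge $ab\in M'$. The resulting bowtie, centered at $y$ with triangles $y,v,x$ and $y,a,b$, has side-edges $vx$ (of colour $c$, since $x\in N_c(v)$) and $ab$ (of colour $\neq c$, since $ab\in M'$), and the two centre-edges $yv, yx$ incident to $vx$ both have colour $\neq c$ (the first since $y\in N_{\bar c}(v)$, the second since $xy\in M_v$). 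This is a valid $c$-type II bowtie.

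To guarantee vertex-disjointness across the $s/4$ bowties, at the $k$-th step I would require $xy$ and $ab$ to lie outside $V^*\cup W_{k-1}$, where $W_{k-1}$ denotes the set of wings of the bowties built so far. Since $|V^*\cup W_{k-1}|\leq s/4 + 4(k-1)\leq 5s/4 = 5t/16$ and each forbidden vertex blocks at most one edge of any matching, there remain at least $4t-5t/16>3t$ valid choices of $xy$ in $M_v$ and at least $16t-5t/16-2>15t$ valid choices of $ab$ in $M'$. Hence the greedy procedure runs through all $s/4$ rounds and outputs the required collection of vertex-disjoint $c$-type II bowties. I expect no substantive obstacle beyond identifying the correct placement of the bowtie's centre; the remaining verifications are routine bookkeeping using the matchings supplied by the definition of $Y_c$ and \Cref{ass:111}.
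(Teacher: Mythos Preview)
Your proposal is correct and follows essentially the same approach as the paper: centre the bowtie not at $v\in Y_c$ but at a neighbour $y\in N_{\bar c}(v)$, use the $c$-edge $vx$ (with $x\in N_c(v)$) as one side-edge, the matching edge $xy\in M_v$ and $vy$ as the centre-edges incident to it, and pull the second side-edge from a $c$-free matching in $G_\alpha[N(y)]$ via \Cref{ass:111}. Your variables $v,x,y$ correspond to the paper's $y,z,w$; the only minor point is that in your disjointness bookkeeping you should make sure the forbidden set $W_{k-1}$ includes the previously chosen \emph{centres} (the $y$'s), not just the four non-centre vertices of each bowtie, but the bound $5s/4\ll 4t$ has ample slack to absorb this.
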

\begin{proof}
We may assume $Y_{c}=\{y_1,...,y_{s/4}\}$, and then we will find a bowtie of $c$-Type II containing $y_i$ for all $y_i\in Y_c$ greedily. On the one hand, for each $y\in Y_c$, by \eqref{def:S2} there exists a matching $M_1\subset B(y,c)$ without $c$-edges satisfies
$$e(M_1)\geq 4t.$$
Let's pick an edge $zw\in E(M_1)$
such that $yz$ is a $c$-edge while $yw$ is not. Fix $w$, and let's find a bowtie with center $w$. By \Cref{ass:111}, there exists a matching $M_2\subset G_\alpha[N(w)]$ without $c$-edges such that 
$$e(M_2)\geq 16t.$$
Arbitrarily pick an edge $ab\in M_1$ which is disjoint with $yz$, we get a bowtie on $\{y,z,w,a,b\}$ with center $w$, that is of $c$-Type II since $yz$ is a $c$-edge (as a side edge) while $yw,zy$ (as center edges connecting $yz$) and $ab$ (another side edge) are not. Note that we have at least
$$\min\{e(M_1),\ e(M_2)-2\}\geq 4t$$
choices to find such a bowtie. On the other hand, for each $1\leq i\leq s/4$, to make sure the bowtie containing $y_i$ is vertex-disjoint with previous bowties containing $y_j$ that $j<i$, we need to avoid at most
$$5\cdot |Y_c|\leq 5\cdot s/4\leq 2s.$$
Since $2s<4t$, we can greedily find a vertex-disjoint bowtie of $c$-type II for each $y_i\in Y_c$. 
\end{proof}

We claim if there exists $s/4$ vertex-disjoint bowties of $c$-type II in $G_\alpha$, then it has a Hamilton cycle with $b$ colour bias as well. The following lemma is similar to \Cref{lem:bowtie1}.

\begin{lemma}\label{lem:bowtie2}
Let $0\leq\ell\leq 2^{-5}n$ and $c\in [r]$. If $G_\alpha$ contains at least $\ell$ vertex-disjoint bowties of \emph{$c$-type II}, then it has a Hamilton cycle with colour bias at least $\ell/2$. 
\end{lemma}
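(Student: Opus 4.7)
The plan is to adapt the strategy of \Cref{lem:bowtie1}: absorb the centers of the $\ell$ vertex-disjoint bowties of $c$-type II into a suitable base cycle in two different ways, producing Hamilton cycles $H_A, H_B$ whose counts of colour-$c$ edges differ by at least $\ell$. The key difference from the $c$-type I setting is that for $c$-type II bowties only one side-edge has colour $c$ and only the two center-edges incident to it are guaranteed to avoid colour $c$, so we obtain only a one-sided control on the difference of colour-$c$ counts between $H_A$ and $H_B$.

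To execute this, I would first label the structure: for each bowtie $B_i$ with center $z_i$, write the $c$-coloured side-edge as $a_ib_i$ (with $z_ia_i, z_ib_i$ not of colour $c$) and the other side-edge as $u_iv_i$ (of some colour $c_i'\neq c$). Let $B_0=\{z_1,\dots,z_\ell\}$. Next, apply \Cref{lem:Posa} inside $G_\alpha[V(G_\alpha)\setminus B_0]$ to the path forest consisting of all $2\ell$ side-edges; the required minimum-degree bound follows since $\delta(G_\alpha[V(G_\alpha)\setminus B_0])\geq n/2+n/(2r)-\ell$ and $\ell\leq 2^{-5}n$. This yields a spanning cycle $H$ of $G_\alpha[V(G_\alpha)\setminus B_0]$ containing every side-edge. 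Then for each $i$ I introduce two local absorptions of $z_i$: Option A$_i$ replaces $a_ib_i$ by the center-edges $z_ia_i, z_ib_i$, and Option B$_i$ replaces $u_iv_i$ by $z_iu_i, z_iv_i$. Let $H_A$ and $H_B$ be the Hamilton cycles obtained by using Option A (resp.\ Option B) at every $B_i$. Each bowtie contributes $-1$ to $\text{count}_c$ under Option A, and $+(\mathbbm{1}[\chi(z_iu_i)=c]+\mathbbm{1}[\chi(z_iv_i)=c])\geq 0$ under Option B, so
$$\text{count}_c(H_B)-\text{count}_c(H_A)\geq \ell.$$
Hence either $\text{count}_c(H_B)\geq n/r+\ell/2$, giving $H_B$ colour bias at least $\ell/2$ directly, or $\text{count}_c(H_A)\leq n/r-\ell/2$, in which case pigeonholing over the remaining $r-1$ colours produces a colour in $H_A$ with count at least $n/r+\ell/(2(r-1))$.

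The main obstacle lies in the fact that, unlike in the $c$-type I setup where both options alter $\text{count}_c$ by a controlled signed amount, Option B here only guarantees a nonnegative contribution; the value $\mathbbm{1}[\chi(z_iu_i)=c]+\mathbbm{1}[\chi(z_iv_i)=c]$ is uncontrolled and could vanish. The clean version of the dichotomy above yields only bias $\ell/(2(r-1))$ in the second branch, matching the claimed $\ell/2$ when $r=2$. To reach the sharp $\ell/2$ for general $r$, I would expect to need either an adaptive, bowtie-by-bowtie selection of A versus B to force the high-$c$-count branch of the dichotomy to always apply, or an appeal to an absolute-deviation formulation of colour bias; making this case analysis rigorous is the technical heart of the proof, while the structural skeleton is exactly as in \Cref{lem:bowtie1}.
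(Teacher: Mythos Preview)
Your skeleton is exactly the paper's: delete the $\ell$ centers, apply \Cref{lem:Posa} to get a spanning cycle through all $2\ell$ side-edges, and absorb each center in the two possible ways to produce Hamilton cycles $H_A,H_B$ whose colour-$c$ counts differ by at least $\ell$; then deduce colour bias $\ell/2$.

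The concern you raise in your last paragraph is well-founded, and the paper's proof does not address it either: it simply asserts that ``one of these two Hamilton cycles contains at least $\ell$ more edges of colour $c$ than the other, which implies a Hamilton cycle with colour bias at least $\ell/2$'' --- the same implication used verbatim in \Cref{lem:propkey} and \Cref{lem:bowtie1}. As you note, for $r\ge 3$ the straightforward dichotomy only gives bias $\ell/\bigl(2(r-1)\bigr)$ in the low-count branch. This does not damage the paper's overall argument: in the sole application (\Cref{lem:Xc*Yc*}) one takes $\ell=s/4=2rb$ and needs only bias $\ge b$, and already $\ell/\bigl(2(r-1)\bigr)=rb/(r-1)\ge b$. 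So your structural plan is correct and matches the paper; the only loose end---shared by the paper---is the exact constant in the conclusion for $r\ge 3$, and no ``adaptive bowtie-by-bowtie selection'' is needed for the role the lemma actually plays.
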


\begin{proof}
Let $\cB$ be the collection of these vertex-disjoint bowties of $c$-type II, and let $Z\subset V(G_\alpha)$ be the set of vertices of bowties in $\cB$. By \Cref{lem:Posa}, there exists a spanning cycle $H\subset G_\alpha[V(G_\alpha)\setminus Z]$ which contains the side edges of every bowtie in $\cB$. We can also extend $H$ to two Hamilton cycles, by substituting the side-edge of colour $c$ or the opposite side-edge to a path of length 2 containing the center for each bowtie in $\cB$. Then one of these two Hamilton cycles contains at least $t$ more edges of colour $c$ than the other, which implies a Hamilton cycle with colour bias at least $\ell/2$, as required.
\end{proof}

Now put \Cref{lem:bowtie01,lem:bowtie1,lem:bowtie02,lem:bowtie2} together, we are ready to prove \Cref{lem:Xc*Yc*}.

\begin{proof}[Proof of \Cref{lem:Xc*Yc*}]
First if $|X_c|\geq s/4$, there exists $s/4$ disjoint bowties of $c$-type I by \Cref{lem:bowtie01}. This implies a Hamilton cycle in $G_\alpha$ with colour bias $s/8r\geq b$, by choosing $\ell = s/4$ and applying \Cref{lem:bowtie1}. Similarly if $|Y_c|\geq s/4$, there exists $s/4$ disjoint bowties of $c$-type II by \Cref{lem:bowtie02}. And by choosing $\ell=s/4$ and applying \Cref{lem:bowtie2}, there exists a Hamilton cycle in $G_\alpha$ with colour bias $s/8\geq b$, finishing the proof.
\end{proof}

\medskip

\subsection{Proof of \Cref{thm:alpha=3/4'}}\label{section 4.3}

In this subsection we will apply the structural \Cref{prop:structure} to show \Cref{thm:alpha=3/4'}. We will use a quasi-random property of $G\cup G(n,m)$ as follows.

\begin{lemma}\label{lem:3/4quasiproperty}
Let $0<\alpha\leq1$, $\beta\geq 1-\alpha$ be constants and $1\ll m\leq 2^{-5}n$. If $G_\alpha\in\G_{\alpha}(n)$ and $R\sim G(n,p)$ with $p=m/n^2$ and $V(R)=V(G_\alpha)$, then with high probability there exists a matching $M_U\subset (G_\alpha\cup R)[U]$ such that
$$e(M_U)\geq 2^{-5}\beta ^2\cdot m$$
for every $U\subset V(G_\alpha)$ with $|U|=\beta n$.
\end{lemma}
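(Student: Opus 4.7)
The plan is a case split on how large a matching $G_\alpha[U]$ itself already contains. Fix $U\subset V(G_\alpha)$ with $|U|=\beta n$ and let $M^*\subseteq G_\alpha[U]$ be a maximum matching. By maximality, $I:=U\setminus V(M^*)$ is independent in $G_\alpha$, so $|I|\le \alpha(G_\alpha)\le (1-\alpha)n$. If $|M^*|\ge 2^{-5}\beta^2 m$ we take $M_U:=M^*$ and are done; otherwise $|I|=\beta n - 2|M^*| > \beta n - 2^{-4}\beta^2 m \ge (1-2^{-9})\beta n$, using $m\le 2^{-5}n$. It then suffices to produce a matching of size $\ge 2^{-5}\beta^2 m$ in $R[I]$ alone.

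For a fixed such $I$ of size in $[(1-2^{-9})\beta n,\,(1-\alpha)n]$, the plan is to lower bound $\mu(R[I])$ by isolated-edge counting. One has $\E[e(R[I])]=\binom{|I|}{2}p\ge (1-o(1))\beta^2 m/2$, and writing $Y(R[I])$ for the number of edges of $R[I]$ sharing an endpoint with another edge of $R[I]$, we get $\E[Y(R[I])]\le |I|^3 p^2\le \beta^3 m^2/n\le \beta^3 m/32$ from $m\le 2^{-5}n$. Chernoff for the binomial $e(R[I])$ together with a second-moment / Janson-type bound for $Y(R[I])$ (the latter being essentially a sum of path-of-length-$2$ indicators) give, with probability at least $1-\exp(-\Omega(\beta^2 m))$, both $e(R[I])\ge \beta^2 m/4$ and $Y(R[I])\le \beta^3 m/16$. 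The isolated edges of $R[I]$ then form a matching of size
\[
\mu(R[I])\ \ge\ e(R[I])-Y(R[I])\ \ge\ \beta^2 m\bigl(\tfrac14-\tfrac{\beta}{16}\bigr)\ \ge\ \tfrac{3\beta^2 m}{16}\ \ge\ 2^{-5}\beta^2 m.
\]

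It remains to union-bound over the relevant independent sets of $G_\alpha$, and this is where I expect the main difficulty. The minimum-degree condition imposes strong rigidity: if $I_0$ is a maximum independent set of $G_\alpha$ (so $|I_0|=(1-\alpha)n$), then every $v\in I_0$ satisfies $N(v)=V\setminus I_0$ by a degree count; this forces two distinct maximum independent sets to be disjoint, so there are at most $\lfloor 1/(1-\alpha)\rfloor=O(1)$ of them. For a near-maximum independent set $I$ of size $(1-\alpha)n-k$ containing a vertex $v$, one has $I\subseteq V\setminus N(v)$ with $|V\setminus N(v)|\le (1-\alpha)n+k$, yielding at most $n^{O(k)}$ independent sets at level $k$ below maximum. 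Balancing this count against the failure probability $\exp(-\Omega(\beta^2 m))$ closes the union bound when $m$ is polynomially large in $n$; for $m$ only mildly super-constant the count $n^{O(k)}$ with $k$ ranging up to $2^{-9}\beta n$ can dominate, and closing the gap will likely require either a container-style refinement of the independent-set count, or a monotonicity-based transfer $\mu(R[I])\ge \mu(R[I'])-|I'\setminus I|$ that reduces the problem to the $O(1)$ canonical maximum independent sets.
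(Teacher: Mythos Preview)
Your case split is sound, and you correctly isolate the crux: once $G_\alpha[U]$ lacks a large matching, $U$ contains a near-maximum independent set $I$ of $G_\alpha$, and one must find a matching in $R[I]$ \emph{uniformly} over all such $I$. You also correctly flag the union bound as the bottleneck and propose the right remedy, namely a monotonicity transfer to $O(1)$ canonical sets. That last idea is exactly how the paper proceeds. However, your writeup of the direct route contains two genuine errors, and the transfer is left as a suggestion rather than carried out.

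First, the claimed failure probability $\exp(-\Omega(\beta^2 m))$ for the event $Y(R[I])\le \beta^3 m/16$ is not justified. This is an \emph{upper} tail of a degree-two polynomial (a $P_2$-count), and neither Janson nor a second-moment bound gives exponential-in-$m$ decay there; the paper in fact uses Kim--Vu, obtaining only $\exp(-m^{\Theta(1)})$ at best. Second, your assertion that the union bound ``closes when $m$ is polynomially large'' is incorrect: the count $n^{O(k)}$ must be summed over $k$ up to $2^{-4}\beta^2 m$, giving $\exp(\Theta(m\log n))$ sets, which swamps even a genuine $\exp(-\Omega(m))$ tail for every $m\le 2^{-5}n$. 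So no regime of $m$ is saved by the direct route.

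The paper sidesteps both issues precisely via the transfer you suggest. Rather than enumerating near-maximum independent sets, it fixes \emph{before revealing $R$} a maximal family $\mathcal U_0=\{U_1,\dots,U_\ell\}$ of ``bad'' $\beta n$-sets whose large independent parts $I(U_i)$ are pairwise disjoint; since each $|I(U_i)|\ge \beta n - 2^{-4}\beta^2 m$, one gets $\ell\le 2\lceil\beta^{-1}\rceil=O(1)$. For these finitely many deterministic sets, a mere ``with high probability'' bound (Lemma~\ref{lem:Rjanson}) suffices to place a matching of size $2^{-4}\beta^2 m$ in each $(G_\alpha\cup R)[U_i]$, and the union bound is over $O(1)$ events. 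The transfer step is then purely deterministic: for any other bad $U$, its independent part $I(U)$ must intersect some $I(U_i)$ by maximality, and the degree condition forces $|I(U)\cap I(U_i)|\ge \beta n - O(\beta^2 m)$ (otherwise a vertex in the intersection would have too small a degree, as its neighbourhood avoids $I(U)\cup I(U_i)$). Consequently $|U_i\setminus U|\le O(\beta^2 m)$, so at least half of the matching $M_{U_i}$ survives inside $U$. Your sketch that ``two distinct maximum independent sets are disjoint'' is a special case of this dichotomy, but you need the robust version (near-maximum sets either are disjoint or nearly coincide), and you should not assume $\alpha(G_\alpha)=(1-\alpha)n$ exactly.
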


To show \Cref{lem:3/4quasiproperty} we need the following simple lemma.

\begin{lemma}\label{lem:Rjanson}
Let $0<\beta\leq1$ be a constant and $1\ll m\leq 2^{-5}n$, and let $R\sim G(n,p)$ with $p=m/n^2$. Fix an arbitrary $U\subset V(R)$ with $|U|=\beta n$, then with high probability there exists a matching $M_U\subset R[U]$ of size
$$e(M_U)\geq 2^{-4}\beta^2\cdot m.$$
\end{lemma}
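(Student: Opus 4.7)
The plan is to find the required matching by exhibiting $2^{-4}\beta^2 m$ \emph{isolated edges} of $R[U]$, meaning edges $uv \in R[U]$ such that no other edge of $R[U]$ is incident to $u$ or $v$; such edges are pairwise vertex-disjoint and hence automatically form a matching. Setting
$$Z = \sum_{e \in \binom{U}{2}} \mathbbm{1}\bigl[e \text{ is isolated in } R[U]\bigr],$$
I will show that $Z \ge 2^{-4}\beta^2 m$ with high probability by the second-moment method.

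The first step is a lower bound on $\mathbb{E}[Z]$. For a potential edge $e = uv$ there are exactly $2(\beta n - 2)$ other potential edges at $u$ or $v$ within $U$, so $\mathbb{E}[\mathbbm{1}[e \text{ isolated}]] = p(1-p)^{2\beta n - 4}$. Since $m \le 2^{-5}n$ forces $2\beta n p \le 1/16$, the $(1-p)$-factor is at least $15/16$, and combining with $\binom{\beta n}{2}p \ge \beta^2 m/4$ will give $\mathbb{E}[Z] \ge \beta^2 m/5$.

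The second step is an upper bound on $\mathrm{Var}(Z)$. If $e, f \in \binom{U}{2}$ share a vertex then the two isolation indicators cannot simultaneously equal $1$, so the covariance is non-positive. For disjoint $e = uv$ and $f = wz$, a careful count shows that $\{u,v,w,z\}$ are incident to exactly $4\beta n - 12$ distinct other potential edges in $K_U$, leading to
$$\mathrm{Cov}\bigl(\mathbbm{1}[e], \mathbbm{1}[f]\bigr) \;=\; p^2\bigl[(1-p)^{4\beta n - 12} - (1-p)^{4\beta n - 8}\bigr] \;\le\; 4p^3,$$
via the trivial bound $1 - (1-p)^4 \le 4p$. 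Summing over disjoint pairs contributes at most $4\binom{\beta n}{2}^2 p^3 \le \beta^4 m^3/n^2$, which is $O(m)$ since $m/n \le 2^{-5}$. Combined with $\sum_e \mathrm{Var} \le \mathbb{E}[Z] = O(m)$, this will give $\mathrm{Var}(Z) = O(m)$.

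Chebyshev's inequality then yields
$$\Pr\bigl[Z < \mathbb{E}[Z]/2\bigr] \;\le\; \frac{\mathrm{Var}(Z)}{(\mathbb{E}[Z]/2)^2} \;=\; O(1/m) \;=\; o(1),$$
since $m \to \infty$, so with high probability $Z \ge \mathbb{E}[Z]/2 \ge 2^{-4}\beta^2 m$ as required. The main obstacle is the covariance estimate for disjoint $e, f$: the two isolation events are positively correlated through the shared absence-of-edge constraints between the four endpoints, and one must pinpoint the correct exponents of $(1-p)$ before the $O(p^3)$ bound emerges. Once that is secured, the rest is routine second-moment bookkeeping, and no appeal to Janson's inequality or any degree-based matching argument is needed.
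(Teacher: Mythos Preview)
Your proof is correct and takes a genuinely different route from the paper's. Both arguments ultimately lower-bound the number of \emph{isolated} edges in $R[U]$, but the paper does this indirectly: it first lower-bounds $e(R[U])$ via Chernoff, then upper-bounds the number $Y(R[U])$ of non-isolated edges, splitting into two regimes ($m\ll n^{1/2}$, where Markov suffices, and $m=\Omega(n^{1/2})$, where the Kim--Vu polynomial concentration inequality is invoked to control the path count $Z(R[U])$), and finally subtracts. You instead apply the second-moment method directly to the isolated-edge count $Z$; the key observation that edges sharing a vertex contribute non-positive covariance, together with your $O(p^3)$ bound for disjoint pairs, gives $\mathrm{Var}(Z)=O(m)$ without any case distinction or appeal to Kim--Vu. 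Your approach is thus more elementary and self-contained for this lemma; the paper's approach has the mild advantage that its bound on $Y(R)$ is reused elsewhere (Lemma~\ref{lem:properties1}), so the machinery was already in place.
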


The proof of \Cref{lem:Rjanson} uses simple probabilistic techniques, and for completeness we include the proof in the Appendix. To prove \Cref{lem:3/4quasiproperty}, We begin by selecting a maximal collection of $\beta n$-subsets $U_i \subset V(G_\alpha)$ such that each $U_i$ contains no $\Omega(m)$-matching in $G_\alpha$, and for every pair $U_i, U_j$, their respective maximal independent sets are disjoint. Note that these sets are nearly disjoint, so the collection has constant size, roughly $1/\beta$. We then add random edges to $G_\alpha$ and apply \Cref{lem:Rjanson} to guarantee an $\Omega(m)$-matching within each $U_i$. Finally, we observe that any $\beta n$-subset almost completely overlap with some $U_i$, and hence must contain an $\Omega(m)$-matching. We now make this precise and prove \Cref{lem:3/4quasiproperty}.

\begin{proof}[Proof of \Cref{lem:3/4quasiproperty}]
Given $G_\alpha\in\G_\alpha(n)$, let $\cU$ be the collection of subsets $U\subset V(G_\alpha)$ with $|U|=\beta n$ such that every matching in $G_\alpha[U]$ has fewer than $2^{-5}\beta^2m$ edges. We may assume $\cU\neq \emptyset$. Let $I(U)$ be the largest independent set in $G_\alpha[U]$ for $U\in\cU$, then 
\begin{align}\label{eq:I(U)}
|I(U)|\geq |U|-2\cdot 2^{-5}\beta^2m\geq \beta n-2^{-4}\beta^2m.    
\end{align}
Let $\cU_0=\{U_1,U_2,..,U_\ell\}$
be a maximal collection of $U\in\cU$ such that $$I(U_i)\cap I(U_j)=\emptyset$$
for each pair $U_i,U_j\in\cU_0$ with $i\neq j$. Note by \eqref{eq:I(U)}, the number of sets in $\cU_0$ is
$$|\cU_0|\leq\left\lceil \frac{n}{|I(U_i)|}\right\rceil \leq\left\lceil \frac{n}{\beta n-2^{-4}\beta^2m}\right\rceil\leq 2\lceil\beta^{-1}\rceil.$$

Next let's consider $G_\alpha\cup R$. For each $U_i\in\cU_0$, by \Cref{lem:Rjanson} there exists a matching $M_{U_i}\subset (R\cup G_\alpha)[U_i]$ 
of size
\begin{align}\label{eq:e(MUi)}
e(M_{U_i})\geq 2^{-4}\beta^2m    
\end{align}
with high probability. By taking a union bound over all $U_i\in \cU_0$, we know that $(G_\alpha\cup R)[U_i]$ contains a matching with at least $2^{-4}\beta^2m$ edges for all $U_i\in \cU_0$ with high probability. In the remainder of this proof, we consider $R\cup G_\alpha$ satisfying this property.

Now arbitrarily pick a set $U\in\cU\setminus \cU_0$. We claim there exists $U_i\in \cU_0$ such that
\begin{align}\label{eq:I(U)cap I(Ui)|geqbeta n-2^{-7}beta3m}
|I(U)\cap I(U_i)|\geq\beta n-2^{-6}\beta^2m.    
\end{align}
Indeed, there exists $U_i\in\cU_0$ such that $I(U)\cap I(U_i)\neq\emptyset$, since $\cU_0$ is a maximal collection that $I(U_j)\cap I(U_k)=\emptyset$. If \eqref{eq:I(U)cap I(Ui)|geqbeta n-2^{-7}beta3m} doesn't hold, then the neighborhood of each $x\in I(U)\cap I(U_i)$ doesn't intersect to $I(U_i)\cup I(U)$ in $G_\alpha$. Hence by \eqref{eq:I(U)} and \eqref{eq:I(U)cap I(Ui)|geqbeta n-2^{-7}beta3m},
\begin{align*}
d_{G_\alpha}(x)\leq n-|I(U_i)\cup I(U)|&\leq n-\big(|I(U_i)|+|I(U)|-|I(U)\cap I(U_i)|\big)\\
&\leq (1-\beta)n - 2^{-6}\beta^2m,
\end{align*} 
which makes a contradiction since $d_{G_\alpha}(x)\geq \alpha n\geq(1-\beta)n$. Now by \eqref{eq:e(MUi)} and \eqref{eq:I(U)cap I(Ui)|geqbeta n-2^{-7}beta3m}, 
\begin{align*}
|V(M_{U_i})\cap U|\geq v(M_{U_i})-|U_i\setminus U|\geq v(M_{U_i})-\big(|U_i|-|I(U)\cap I(U_i)|\big)\geq \frac{3}{4}\cdot v(M_{U_i}).
\end{align*}
Therefore $(R\cup G_\alpha)[U\cap U_i]$ contains at least $$\frac12\cdot e(M_{U_i})\geq 2^{-5}\beta^2m$$ edges of $M_{U_i}$, that forms a matching, as required.
\end{proof}

Finally put the structural \Cref{prop:structure} and \Cref{lem:Posa,lem:3/4quasiproperty} together, we are ready to finish the proof of \Cref{thm:alpha=3/4'}.

\begin{proof}[Proof of \Cref{thm:alpha=3/4'}]
Let $\alpha=(r+1)/2r$ and $\beta=1-\alpha$, let $G_\alpha\in \Gamma_\alpha(n)$ be a graph with $v(G_\alpha)=n$ and $\delta(G_\alpha)\geq \alpha n$, and let $G(n,m)$ be the random graph on $V(G_\alpha)$ with $m$ random edges. By \Cref{lem:3/4quasiproperty}, with high probability the union $G_\alpha\cup G(n,m)$ has the following property: for every subset $W\subset V(G_\alpha)$ with $|W|=\beta n$, there exists a matching $M\subset \big(G_\alpha\cup G(n,m)\big)[W]$ with
\begin{align}\label{eq:e(M0)}
e\big(M\big)> 2^{-5}\beta^2m\geq 2^{-5}\cdot 2^{-2}\cdot(1-r^{-1})^2\geq 2^{-9}m.  
\end{align} 
Indeed we convert $G(n,p)$ to $G(n,m)$ (see Proposition 1.13 in \cite{JLR}) since the above property is monotone. Next let's consider a fixed $F= G_\alpha\cup G(n,m)$ which satisfies \eqref{eq:e(M0)}.

Let $\chi$ be an arbitrary $r$-colouring of $E(F)$ and let
$$b=2^{-20}r^{-1}m.$$
If the restriction of $\chi$ to $G_\alpha$ doesn't have a Hamilton cycle with $b$ colour bias, then by \Cref{prop:structure} there exists a colour $c^*\in [r]$ and a subset $U\subset V(G_\alpha)$ with $|U|=\alpha n$, such that every maximal matching $M'\subset G_\alpha[U]$ without $c^*$-edges satisfies
\begin{align}\label{eq:e(M1)}
e(M')\leq 2^9rb\leq 2^{-11}m. 
\end{align} 
Set $W=V(G_\alpha)\setminus U$, then $|W|=\beta n$. Hence by \eqref{eq:e(M0)}, there exists a matching $M\subset F[W]$ containing at least $2^{-9}m$ edges. Then there exists a Hamilton cycle $H\subset F$ which contains $M$ as a subgraph by \Cref{lem:Posa}, since $v(M)\leq 2e(G(n,m))\leq 2m\leq n/2r$ and
$$\delta(F)\geq \frac{n}{2}+\frac{n}{2r}\geq \frac{n}{2}+v(M).$$ 

Now we claim the Hamilton cycle $H$ has at least $b$ colour bias. Observe that the edges of $H$ not coloured by $c^*$ can be partitioned into three parts: the edges in $H \cap F[U]$, those in $H \cap F[W]$, and those in the bipartite subgraph $H \cap F[W, U]$.

In $H\cap F[U]$, there are at most 
\begin{align}\label{eq:tttt1}
q\leq 2e(M')    
\end{align}
edges that are not coloured by $c^*$, otherwise there would be a matching in $F[U]$ with more than $e(M')$ edges without $c^*$-edges.

The subgraph $H\cap F[W]$ is a linear path forest which contains
\begin{align}\label{eq:tttt2}
d\geq e(M)    
\end{align}
edges, since $M\subset (H\cap F[W])$. Note it's possible that all edges of $H\cap F[W]$ are not of $c^*$.

The bipartite subgraph $H\cap F[W,U]$ contains at most
\begin{align}\label{eq:tttt3}
2d + 2\cdot\big(|W|-2d\big)    
\end{align}
edges. Indeed, the linear forest $H\cap F[W]$ contains at most $d$ disjoint paths and up to $|W| - 2d$ isolated vertices. Each isolated vertex is incident to two edges in $H\cap F[W,U]$, while each path endpoint (of which there are at most $2d$) is incident to one such edge. All remaining vertices are not incident to any edge in $H\cap F[W,U]$. Therefore, in total, $H\cap F[W,U]$ contains at most $2(|W| - 2d) + 2d$ edges. It's also possible that all these edges don't use $c^*$.

Putting \eqref{eq:tttt1}, \eqref{eq:tttt2} and \eqref{eq:tttt3} together, we know that the Hamilton cycle $H$ contains at most
\begin{align}\label{eq:tttt4}
q + d + 2\cdot\big(|W|-2d\big)+2d&\leq 2|W| - d + q\leq \frac{r-1}{r}\cdot n-2^{-10}m
\end{align}
edges that are not of colour $c^*$. The second inequality holds since $|W|=\beta n=(r-1)n/2r$, and by \eqref{eq:e(M0)}, \eqref{eq:e(M1)}, \eqref{eq:tttt1} and \eqref{eq:tttt2} we have
$$d - q\geq e(M)-2e(M')\geq2^{-9}m-2\cdot2^{-11}m \geq2^{-10}m.$$
Therefore by \eqref{eq:tttt4}, the number of $c^*$-edges in $H$ is at least
$$n-\frac{r-1}{r}\cdot n+2^{-10}m\geq \frac{n}{r}+2^{-10}m,$$
which implies $H$ has colour bias at least $2^{-10}m>b$, finishing the proof.
\end{proof}

\vspace{1.3cm}

\section*{Acknowledgements}

We are grateful to Jie Han and Rob Morris for helpful discussions, and we also thank to Rob Morris for suggestions on the presentation of this paper.

\newpage

\newcommand{\etalchar}[1]{$^{#1}$}

\section{Auxiliary Lemma Proofs}

For the reader's convenience, we will recall the statement of  \Cref{lem:properties1,lem:properties2,lem:Rjanson} and present the proofs. 

First let's focus on the quasi-random properties of $G(n,p)$ (\Cref{lem:properties1}). 

\begin{lemma}\label{lem:properties1'}
Let $c>0$ be constant, and let $R\sim G(n,p)$ with $p=c/n$.
Then
\begin{center}
$cn/4\leq e(R)\leq cn\qquad\textup{and}\qquad Y(R)\leq2p^2n^3\leq 2c^2n$ 
\end{center}
hold with probability at least $1-e^{-\Omega(n^{1/4})}$. Moreover for every $\delta>0$ with $2e/\delta<e^{c\delta/2}$, 
$$R~\ \textup{is}~\ \delta n\textup{-joined}$$
with probability at least $1-e^{-\Omega(n)}$.
\end{lemma}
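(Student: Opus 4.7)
\smallskip

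The lemma splits naturally into three independent claims, and I would handle each of them with its own standard tool.

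For the edge count, note that $e(R)$ is a binomial random variable with parameters $\binom{n}{2}$ and $p = c/n$, hence has mean $\mu = p\binom{n}{2}$ with $cn/2 - o(n) \le \mu \le cn/2$. A standard Chernoff bound applied to $e(R)$ gives $cn/4 \le e(R) \le cn$ with probability at least $1 - e^{-\Omega(n)}$, which is comfortably stronger than the claimed $1 - e^{-\Omega(n^{1/4})}$.

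For the non-isolated edge bound, my plan is to pass from $Y(R)$ to the simpler statistic $X = \sum_{v} \binom{d_R(v)}{2}$, i.e.\ the number of paths of length two (cherries) in $R$. Every non-isolated edge of $R$ lies in at least one cherry, and every cherry contains two edges, so $Y(R) \le 2X$. A direct computation gives
$$\mathbb{E}[X] \;=\; n\binom{n-1}{2}p^2 \;\le\; \tfrac{1}{2}\,p^2 n^3 \;=\; \tfrac{c^2 n}{2},$$
so it suffices to show $X \le p^2 n^3$ with the required probability. Since $X$ is a polynomial of degree two in the edge indicators of $R$, with mean of order $n$, I would invoke the Kim--Vu polynomial concentration inequality \cite{KV} (alternatively, the upper-tail bound of Janson--Ruci\'nski \cite{JR}) to conclude that $\Pr[X \ge 2\mathbb{E}X] \le e^{-\Omega(n^{1/4})}$, which yields the desired concentration on $Y(R)$.

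For the joining property, I would use a straightforward union bound. Fix disjoint vertex sets $A, B \subset V(R)$ with $|A| = |B| = \delta n$. The probability that $R$ has no edge between $A$ and $B$ is $(1-p)^{\delta^2 n^2} \le e^{-c\delta^2 n}$. Summing over all choices of $A$ and $B$ yields
$$\Pr\big[R \text{ is not } \delta n\text{-joined}\big] \;\le\; \binom{n}{\delta n}^{\!2} e^{-c\delta^2 n} \;\le\; (e/\delta)^{2\delta n} e^{-c\delta^2 n}.$$
Taking logarithms, the exponent is $2\delta n \log(e/\delta) - c\delta^2 n$, and the hypothesis $2e/\delta < e^{c\delta/2}$ translates to $2\log(e/\delta) < c\delta - 2\log 2$. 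Multiplying by $\delta n$, this gives exponent at most $-2(\log 2)\delta n = -\Omega(n)$, as required.

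The only non-routine ingredient is the concentration of $Y(R)$, where the small mean $\Theta(n)$ combined with dependencies among cherries rules out a direct Chernoff argument; the exponent $n^{1/4}$ in the conclusion is the fingerprint of a Kim--Vu style bound for degree-two polynomials. The other two parts reduce to a binomial Chernoff estimate and an elementary union bound, and the constants line up cleanly with the hypothesis $2e/\delta < e^{c\delta/2}$.
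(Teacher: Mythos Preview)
Your proposal is correct and follows essentially the same route as the paper: Chernoff for the edge count, the bound $Y(R)\le 2Z(R)$ with $Z(R)$ the cherry count followed by Kim--Vu for the $n^{1/4}$ tail, and the identical union bound for the $\delta n$-joined property. Your handling of the exponent in the joining bound (keeping the $2\delta n$ rather than $2n$) is in fact the cleaner way to make the hypothesis $2e/\delta<e^{c\delta/2}$ do its work.
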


We will use the Kim-Vu inequality \cite{KV} (also see~Theorem 3A in \cite{JR}) to control the upper tail of $Y(R)$. Let 
$X$ be a random variable counting the number of appearances of some fixed graph 
$H$ as a (not necessarily induced) subgraph of $R\sim G(n,p)$, Then 
$X$ can be written as a polynomial of degree $k=e(H)$ in the edge indicator variables. Let 
$$\lambda:=\E[X]\qquad \textup{and}\qquad \E_1[X]:=\max_{e\in E(K_n)} \E[X_e]$$
where $X_e$ denotes the number of copies of $H$ containing the edge $e$.
\begin{lemma}[Kim-Vu inequality]\label{lem:KimVu}
For any $\ell\geq1$, if $\lambda\geq\E_1(X)$, then
$$\Prob\Big(|X-\lambda|\geq c_k\ell^k\sqrt{\lambda\E_1[X]}\Big)\leq \exp\big(-\ell+2(k-1)\log n\big)$$
for some constant $c_k > 0$.
\end{lemma}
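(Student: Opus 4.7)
The Kim--Vu inequality is a classical concentration result, established in \cite{KV}, for multilinear polynomials in independent Bernoulli variables. My plan is to sketch the proof by strong induction on the polynomial degree $k$, following the original Kim--Vu strategy of combining a Doob martingale with a "typical boundedness" truncation of the partial derivatives.

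The setup is as follows: write $X = X(Y_1, \ldots, Y_N)$ as a multilinear polynomial with non-negative integer coefficients in the $N = \binom{n}{2}$ independent Bernoulli edge indicators of $G(n,p)$. For each subset $A \subset [N]$, the partial derivative $\partial_A X$ is itself a multilinear polynomial of degree $k - |A|$, counting labelled copies of $H$ whose edge set contains $A$; by construction $\E[\partial_A X] \le \E_{|A|}[X] \le \E_1[X]$ for every $|A| \ge 1$. The base case $k=1$ is immediate, since $X$ is a non-negative integer combination of independent Bernoullis and a standard Chernoff bound yields the claim (with any universal constant $c_1$).

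For the inductive step with $k \ge 2$, I would apply the inductive hypothesis (with the same parameter $\ell$) to each of the polynomials $X_e := \partial_e X$, which have degree $k-1$, for $e = 1, \ldots, N$. A union bound over the $N \le n^2$ possible edges $e$ yields, with probability at least $1 - \exp(-\ell + 2(k-1)\log n)$, a \emph{good event} $\mathcal{G}$ on which every $X_e$ lies within $O(\ell^{k-1}\sqrt{\E[X_e]\cdot \E_1[X_e]})$ of its expectation, and in particular is uniformly bounded by $O(\ell^{k-1}\E_1[X])$. On $\mathcal{G}$, consider the Doob martingale $Z_i = \E[X \mid Y_1, \ldots, Y_i]$; the $i$-th increment equals $(Y_i - p)\cdot \E[X_{e_i} \mid Y_1, \ldots, Y_{i-1}]$ by multilinearity and is therefore bounded in magnitude by $O(\ell^{k-1}\E_1[X])$. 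A one-step Azuma--Hoeffding argument, using $\sum_i \E[X_{e_i}] = k\lambda$ together with the assumption $\lambda \ge \E_1[X]$ to control the sum of squared increments, produces the final deviation $O(\ell^k \sqrt{\lambda\, \E_1[X]})$, with one additional factor of $\ell$ coming from the Azuma tail.

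The main obstacle is what happens \emph{off} $\mathcal{G}$: there the martingale-difference bound may fail, so one has to verify that the inductive failure probability already absorbs this contribution, essentially by showing the bad events are nested correctly across the degree hierarchy. A secondary technical point is propagating the constant $c_k$ cleanly through the induction so that it depends only on $k$ and not on $n$, which requires careful accounting of how the factor $\sqrt{\lambda\cdot \E_1[X]}$ is inherited from the scale $\sqrt{\E[X_e]\cdot \E_1[X_e]}$ at the previous level.
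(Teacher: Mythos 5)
First, a point of order: the paper does not prove this lemma at all — it is quoted as a black-box result of Kim and Vu \cite{KV} (see also Theorem 3A in \cite{JR}), so there is no in-paper proof to compare yours against; the only question is whether your sketch stands on its own.

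As written, it does not, and the gap is concrete. In your inductive step you bound the martingale increments by $O(\ell^{k-1}\E_1[X])$, arguing that on the good event each $X_e$ is within $O\big(\ell^{k-1}\sqrt{\E[X_e]\,\E_1[X_e]}\big)$ of its mean. But $\E_1[X_e]$ is a \emph{second-order} quantity for $X$: it equals $\max_f \E[\partial_{\{e,f\}}X]$, i.e.\ it is controlled by $\E_2[X]$, not by $\E_1[X]$. The hypothesis $\lambda\ge\E_1[X]$ gives no control of $\E_2[X],\dots,\E_k[X]$, and for subgraph counts these can exceed $\E_1[X]$ (for sparse $p$ the expected number of copies containing $j\ge 2$ fixed edges can be of order $1$ while $\E_1[X]=o(1)$). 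This is exactly why the genuine Kim--Vu theorem is stated with $\mathcal{E}'=\max_{j\ge1}\E_j[X]$ in place of $\E_1[X]$ and why its proof runs a simultaneous induction controlling \emph{all} partial derivatives; the simplified form quoted in the paper is harmless only because in its applications ($X$ counts copies of $P_2$, $k=2$) the higher $\E_j$ cause no trouble. Moreover, the two issues you flag as "obstacles" are the actual heart of the matter, not technicalities: conditioning on $\mathcal{G}$ destroys the martingale property of $Z_i=\E[X\mid Y_1,\dots,Y_i]$, and the increment $(Y_i-p)\,\E[X_{e_i}\mid Y_1,\dots,Y_{i-1}]$ involves a conditional expectation that is not bounded by the concentration of the fully revealed $X_{e_i}$ on $\mathcal{G}$; Kim and Vu handle this with a bespoke martingale inequality for polynomials with bounded average effects and a decomposition into principal and adjusted parts, which a plain Azuma bound on a good event does not replace. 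So your overall strategy (induction on degree plus a martingale inequality) is the right family of ideas, but the increment bound is not implied by the stated hypotheses and the hardest steps are left unproved; since the paper itself simply cites \cite{KV,JR}, the cleanest course is to do the same rather than reprove the inequality.
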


Using the Kim-Vu inequality, it's straightforward to prove \Cref{lem:properties1'}.

\begin{proof}[Proof of \Cref{lem:properties1'}]
First note that 
$$\E[e(R)]=\binom{n}{2}\cdot p=\frac{c(n-1)}{2}.$$ 
And by Chernoff's inequality, it follows that 
$$\Prob(cn/4\leq e(R)\leq cn)\geq 1-e^{-\Omega(n)}. $$

Next let $Z(R)$ be the number of copies of $P_2$ in $R$. Observe that deterministically
$$Y(R)\leq 2Z(R).$$
We apply the Kim-Vu inequality with $X = Z(R)$.
Note that 
$$\lambda = \E[Z(R)]\leq \binom{n}{2}\cdot n\cdot p^2\leq n^3p^2/2\leq c^2n/2,$$
and that $\E_1[Z(R)]\leq 2np\leq 2c$. Moreover, if $\ell =(4c_2)^{-1/2}\cdot(cn)^{1/4}$, where $c_2 > 0$ is the constant in \Cref{lem:KimVu}, then 
$$c_2\ell^2\sqrt{\lambda\E_1[Z(R)]}\leq c_2 \cdot (4c_2)^{-1} \cdot (cn)^{1/2}\cdot (c^2n\cdot 2c)^{1/2}\leq c^2n/2.$$
By \Cref{lem:KimVu}, it follows that
\begin{align*}
\Prob\big( Z(R) \geq c^2n \big) &\leq \Prob\big(|Z(R)-\lambda|\geq c^2n/2\big)\\
&\leq \Prob\Big(|Z(R)-\lambda| \geq c_2\ell^2\sqrt{\lambda\E_1[Z(R)]}\Big)\\
&\leq\exp\big(-\ell+2\log n\big)\\
&\leq e^{-\Omega(n^{1/4})},    
\end{align*}
and we therefore have
$$\Prob(Y(R)\geq 2c^2n)\leq \Prob(Z(R)\geq c^2n)\leq e^{-\Omega(n^{1/4})}.$$

Finally, similar to \eqref{eq:jointed}, the probability that $R$ is not $\delta n$-joined is at most 
\begin{align*}
\binom{n}{\delta n}^2(1-p)^{(\delta n)^2}\leq \bigg(\frac{e}{\delta}\bigg)^{2n}\cdot e^{-p\delta^2n^2}\leq \bigg(\frac{e}{\delta}\cdot e^{-c\delta^2/2}\bigg)^{2n}<4^{-n},
\end{align*}
since $2e/\delta<e^{c\delta/2}$, as required.
\end{proof}

Next let's consider the quasi-random properties of $G(n,p)\cup G_\alpha$ (\Cref{lem:properties2}).

\begin{lemma}
Let $0<\alpha\leq 1$ and $0<c\leq 2^{-8}$ be constants, let $G_\alpha\in \G_\alpha(n)$ and $R\sim G(n,p)$ with $p=c/n$ and $V(G_\alpha)=V(R)$. With high probability $G_\alpha\cup R$ has the following properties:

\begin{itemize}
\item[$(i)$] For every $x\in V(R)$,
$$e\big(R[N_{G_\alpha}(x)]\big)\geq \alpha^2cn/4.$$

\item[$(ii)$] For every $x,y\in V(R)$ with 
$|N_{G_\alpha}(x)\setminus N_{G_\alpha}(y)|\geq \alpha n/2$ and $|N_{G_\alpha}(y)\setminus N_{G_\alpha}(x)|\geq \alpha n/2,$ there exists a matching $M_{x,y}\subset R$ of size
$$e(M_{x,y})\geq 2^{-4}c\alpha^2 n$$
between $N_{G_\alpha}(x)\setminus N_{G_\alpha}(y)$ and $N_{G_\alpha}(y)\setminus N_{G_\alpha}(x)$.
\end{itemize}
\end{lemma}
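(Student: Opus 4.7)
The plan is as follows. I will handle the two properties separately; both rely on a Chernoff estimate for an appropriate binomial, with part~(ii) additionally invoking Lemma~A.1 to control non-isolated edges.

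For part~(i), I fix $x\in V(G_\alpha)$ and write $N = N_{G_\alpha}(x)$, a deterministic set with $|N|\ge \alpha n$. Then $e(R[N])$ is binomial with parameters $\binom{|N|}{2}$ and $p=c/n$, with mean at least $(1-o(1))\alpha^2 cn/2$, so Chernoff gives $e(R[N])\ge \alpha^2 cn/4$ except with probability $e^{-\Omega(\alpha^2 cn)}$. A union bound over the $n$ choices of $x$ finishes (i).

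For part~(ii), I fix $x,y$ satisfying the hypothesis and set $A=N_{G_\alpha}(x)\setminus N_{G_\alpha}(y)$ and $B=N_{G_\alpha}(y)\setminus N_{G_\alpha}(x)$, so $|A|,|B|\ge \alpha n/2$. My plan is to take $M_{x,y}$ to be the set of \emph{isolated edges} of $R[A,B]$---those with no other $R[A,B]$-edge sharing a vertex---which automatically form a matching between $A$ and $B$. Two bounds will combine to lower bound this count. First, $\EE[e(R[A,B])]=|A||B|p\ge c\alpha^2 n/4$, so Chernoff gives $e(R[A,B])\ge c\alpha^2 n/8$ with probability $1-e^{-\Omega(n)}$. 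Second, every edge of $R[A,B]$ that is non-isolated within $R[A,B]$ is also non-isolated within $R$, so $Y(R[A,B])\le Y(R)\le 2c^2 n$ with probability $1-e^{-\Omega(n^{1/4})}$ by Lemma~A.1; note that the second bound does not depend on the pair $(x,y)$ and needs to hold only once. Subtracting yields at least $c\alpha^2 n/8-2c^2 n \ge 2^{-4}c\alpha^2 n$ isolated edges, and a union bound over the $\binom{n}{2}$ pairs $(x,y)$ is easily absorbed by the Chernoff tail.

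The main technical obstacle is the numerical inequality $c\alpha^2/8-2c^2\ge c\alpha^2/16$, equivalent to $c\le \alpha^2/32$. This is implied by $c\le 2^{-8}$ only when $\alpha$ is bounded away from zero; in the small-$\alpha$ regime one either shrinks $c$ further (which is harmless in the applications, where $c$ is allowed to be chosen in terms of $\alpha$) or replaces the isolated-edge argument by a Talagrand concentration bound on the maximum matching of $R[A,B]$, whose expectation is already at least the expected number of isolated edges, of order $c\alpha^2 n$; the matching number is $1$-Lipschitz and self-certifying in the edge indicators, so Talagrand's inequality yields exponential concentration regardless of the ratio $c/\alpha^2$. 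Apart from this sharpness point, the argument is a routine combination of Chernoff, Lemma~A.1, and a union bound.
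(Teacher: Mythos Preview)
Your treatment of part~(i) and the Chernoff half of part~(ii) coincide with the paper's proof. The divergence is in how the non-isolated edges are controlled, and this is exactly where your self-diagnosed numerical obstacle arises.

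You bound $Y(R[A,B])\le Y(R)$ and invoke Lemma~A.1 once on the whole of $R$, obtaining $2c^2n$; the resulting inequality $c\alpha^2/8-2c^2\ge c\alpha^2/16$ then needs $c\le \alpha^2/32$, which is not implied by $c\le 2^{-8}$. The paper instead applies Lemma~A.1 to the \emph{induced subgraph} $R_{x,y}:=R[A\cup B]$, which is distributed as $G(|A\cup B|,p)$, and uses $|A\cup B|\le 2\alpha n$ to get
\[
Y(R_{x,y})\le 2p^2(2\alpha n)^3=16\alpha^3 c^2 n.
\]
The required inequality is now $16\alpha^3 c^2\le 2^{-4}c\alpha^2$, i.e.\ $\alpha c\le 2^{-8}$, which \emph{does} follow from $\alpha\le 1$ and $c\le 2^{-8}$. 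The extra factor of $\alpha^3$ gained by localising is precisely what closes your gap, and it is lighter than appealing to Talagrand. The per-pair failure probability $e^{-\Omega(n^{1/4})}$ still absorbs the $\binom{n}{2}$ union bound, so nothing is lost by making the Kim--Vu estimate depend on $(x,y)$.

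In short: replace ``$Y(R[A,B])\le Y(R)$'' by ``$Y(R[A,B])\le Y(R[A\cup B])$ and apply Lemma~A.1 to $R[A\cup B]$'', and the numerics go through for all $0<\alpha\le 1$ with $c\le 2^{-8}$ as stated, without the detours you propose.
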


\begin{proof}
For (i), note that for each $x\in V(G_\alpha)$ we have $$\E\big[e\big(R[N_{G_\alpha}(x)]\big)\big]\geq \binom{\alpha n}{2}\cdot p\geq \alpha^2 n/2.$$
Then we deduce $e\big(R[N_{G_\alpha}(x)]\big)\geq \alpha^2cn/4$ holds for every $x$ by Chernoff's inequality and take a union bound over $x\in V(R)$.

For (ii), fix $x,y\in V(G_\alpha)$. By assumption for $W_1:=N_{G_\alpha}(x)\setminus N_{G_\alpha}(y)$ and $W_2:=N_{G_\alpha}(y)\setminus N_{G_\alpha}(x)$, we have $|W_1|\geq \alpha n/2$ and $|W_2|\geq \alpha n/2$.
Note that the expected number of edges in $R\big[W_1,W_2\big]$ is at least
$$\E\big[e\big(R\big[W_1,W_2\big]\big)\big]\geq(\alpha n/2)^2\cdot p\geq c\alpha^2n/4.$$
Hence by apply Chernoff's inequality,
$$e\big(R\big[W_1,W_2\big]\big)\geq c\alpha^2n/8$$
holds with probability $1-e^{-\Omega(n)}$. Let $R_{x,y}\subset R$ be the induced subgraph on $W_1\cup W_2$, then the number of non-isolated edges in $R\big[W_1,W_2\big]$ is at most $Y(R_{x,y})$. Note that $R_{x,y}\sim G\big(|W_1|+|W_2|,p\big)$ and $|W_1|+|W_2|\leq 2\alpha n$, then by \Cref{lem:properties1'}, 
$$Y(R_{x,y})\leq 2\cdot p^2\cdot(|W_1|+|W_2|)^3\leq 2\cdot (c/n)^2\cdot (2\alpha n)^3\leq 16\alpha^3c^2n$$
holds with probability at least $1-e^{-\Omega(n^{1/4})}$. Now let $M_{x,y}$ be the number of isolated edges in $R[W_1,W_2]$, then
\begin{align}\label{eq:e(M_{x,y})geq calpha^2n}
e(M_{x,y})\geq c\alpha^2n/8-Y(R_{x,y})\geq c\alpha^2n/8-16\alpha^3c^2n \geq 2^{-4}c\alpha^2n    
\end{align}
since $16\alpha^3c^2n\leq2^{-4}c\alpha^2n$ by $c\leq2^{-8}$ and $\alpha\leq1$. Note \eqref{eq:e(M_{x,y})geq calpha^2n} holds with probability at least $1-e^{-\Omega(n^{1/4})}$, thus we can take a union bound over all $x,y\in V(G_\alpha)$.
\end{proof}

The following Lemma is \Cref{lem:Rjanson} in \Cref{section 4.3}.

\begin{lemma}\label{lem:appendix3}
Let $0<\beta\leq1$ be a constant and $1\ll m\leq 2^{-5}n$, and let $R\sim G(n,p)$ with $p=m/n^2$. Fix an arbitrary $U\subset V(R)$ with $|U|=\beta n$, then with high probability there exists a matching $M_U\subset R[U]$ of size
$$e(M_U)\geq 2^{-4}\beta^2m.$$
\end{lemma}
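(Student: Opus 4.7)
The plan is to lower-bound the expected matching number of $R[U]$ by counting isolated edges, and then apply Azuma's inequality via a vertex-exposure martingale to obtain concentration. This approach sidesteps polynomial concentration tools such as Kim--Vu or Janson's upper tail, which would demand $m$ growing at least like a power of $\log n$, while our hypothesis only provides $m \gg 1$.

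First I would estimate the expected number of isolated edges of $R[U] \sim G(\beta n, p)$, calling an edge $uv \in R[U]$ \emph{isolated} if no other edge of $R[U]$ is incident to $u$ or $v$. This expectation equals
\[
\binom{\beta n}{2}\, p\,(1-p)^{2(\beta n - 2)}.
\]
Using $m \leq 2^{-5}n$ and $\beta \leq 1$, I would bound $(1-p)^{2(\beta n - 2)} \geq 1 - 2(\beta n)p \geq 15/16$ and $\binom{\beta n}{2} p \geq \beta^2 m/4$ for $n$ large, so the expectation is at least $15\beta^2 m/64$. Since the set of isolated edges always forms a matching, this yields $\E[\nu(R[U])] \geq 15\beta^2 m/64$, where $\nu$ denotes the matching number.

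Next I would set up the vertex-exposure martingale. Fix an ordering $u_1, \dots, u_{\beta n}$ of $U$, and let $M_i$ be the conditional expectation of $\nu(R[U])$ given the edges of $R[U]$ within $\{u_1, \dots, u_i\}$. The key property is that $|M_i - M_{i-1}| \leq 1$: if two edge configurations differ only in the edges incident to $u_i$, the matching numbers differ by at most one, since any matching contains at most one edge at $u_i$. Azuma's inequality then gives
\[
\Pr\bigl(\nu(R[U]) \leq \E[\nu(R[U])] - t\bigr) \;\leq\; \exp\!\left(-\frac{t^2}{2\beta n}\right)
\]
for every $t > 0$.

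Setting $t = 11\beta^2 m/64$, so that $\E[\nu(R[U])] - t \geq 4\beta^2 m/64 = 2^{-4}\beta^2 m$, and using $m \leq 2^{-5}n$ to deduce $m^2/n \leq m/32$, the right-hand side becomes
\[
\exp\!\left(-\frac{121\,\beta^3 m^2}{2^{13}\, n}\right) \;\leq\; \exp\!\left(-\frac{121\,\beta^3 m}{2^{18}}\right),
\]
which is $o(1)$ because $m \to \infty$. The main point that deserves careful justification is the $1$-bounded differences claim for the vertex-exposure martingale, which follows from a standard coupling argument: fixing all remaining randomness and altering only the edges incident to a single vertex shifts the matching number by at most one, as one can remove the (at most one) matching edge meeting that vertex.
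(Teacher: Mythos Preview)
Your concentration step has a genuine gap. Azuma's inequality via vertex exposure gives
\[
\Pr\bigl(\nu(R[U]) \leq \E[\nu(R[U])] - t\bigr) \leq \exp\!\left(-\frac{t^2}{2\beta n}\right),
\]
and with $t = \Theta(\beta^2 m)$ the exponent is $\Theta(\beta^3 m^2/n)$. For this to tend to infinity you need $m \gg \sqrt{n}$, whereas the hypothesis only guarantees $m \gg 1$; if, say, $m = \log n$, the bound is useless. Concretely, your final displayed inequality
\[
\exp\!\left(-\frac{121\,\beta^3 m^2}{2^{13}\, n}\right) \;\leq\; \exp\!\left(-\frac{121\,\beta^3 m}{2^{18}}\right)
\]
goes the wrong way: since $\exp(-x)$ is decreasing, this would require $m^2/n \geq m/32$, i.e.\ $m \geq n/32$, but you only know $m \leq n/32$. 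The inequality $m^2/n \leq m/32$ you correctly derived makes the exponential \emph{larger}, not smaller. Your expectation estimate and the $1$-Lipschitz property of $\nu$ are both fine; it is purely the concentration that fails in the sparse regime.

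The paper handles this by splitting into two regimes. When $m \ll \sqrt{n}$, the expected number of copies of $P_2$ in $R[U]$ is $O(m^2/n) = o(1)$, so by Markov there are with high probability no non-isolated edges at all, and every edge of $R[U]$ (at least $\beta^2 m/8$ of them, by Chernoff) is already a matching edge. When $m = \Omega(\sqrt{n})$, the paper bounds the number of non-isolated edges via the Kim--Vu inequality; in that range $m \gg (\log n)^8$ holds automatically, so your worry about Kim--Vu requiring polylogarithmic growth does not arise. If you wish to salvage a single-shot concentration argument, replace Azuma with a certifiability-based inequality such as Talagrand's, which yields fluctuations of order $\sqrt{\nu} = O(\sqrt{m})$ rather than $\sqrt{n}$.
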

\begin{proof}
First $R[U]\sim G(\beta n, p)$, and the expected number of edges in $R[U]$ is
$$\E[e(R[U])]=\binom{\beta n}{2}\cdot p \geq \beta^2m/4.$$
Then by Chernoff's inequality with high probability 
$$e(R[U])\geq \beta^2m/8.$$
Next recall that $Y(R[U])$ is the number of edges in $R[U]$ that are not isolated.
Note that 
$$\E[Y(R[U])]\leq 2\cdot \binom{\beta n}{2}\cdot n\cdot p^2\leq \beta^2 n^3p^2\leq \beta^2m^2/n.$$

If $\beta^2m^2\ll n$, that is $m\ll n^{1/2}$, then by Markov's inequality $Y(R[U])=0$ with high probability. Hence all edges of $R[U]$ are isolated and form a matching $M\subset R[U]$ with 
$$e(M)\geq\beta^2m/8>2^{-4}\beta^2m.$$ 

Otherwise if $m=\Omega(n^{1/2})$, recall that $Z(R)$ is the number of copies of $P_2$ in $R$ and deterministically $Y(R)\leq 2Z(R)$.
We apply the Kim-Vu inequality with $X = Z(R[U])$. Note that the expectation of $Z(R[U])$
$$\E[Z(R[U])]\leq \binom{\beta n}{2}\cdot n\cdot p^2\leq \beta^2m^2/2n.$$
Let $k=2, \lambda=\beta^2m^2/2n, \ell=(\beta m)^{1/8}$ and $\E_1=\beta n\cdot p=\beta m/n$. By \Cref{lem:KimVu} we have
\begin{align*}
\Prob\big(Z(R[U])\geq\beta^2m^2/n\big)&\leq \Prob\Big(\big|Z(R[U])-\lambda\big|\geq \beta^2m^2/2n\Big)\\
&\leq\Prob\Big(\big|Z(R[U])-\lambda\big|\geq c\ell^2\cdot (\beta m)^{3/2}/n\Big)\\
&\leq \exp\big(-\ell+O(\log n)\big)\\
&\leq e^{-\Omega(n^{1/16})},    
\end{align*}
where $c$ is a constant and the second inequality holds since $$\ell^2\cdot(\beta m)^{3/2}/n\leq (\beta m)^{7/4}/n\ll (\beta m)^2/n$$
and $n$ is sufficiently large. Then
$$\Prob\big(Y(R[U])\geq 2(\beta m)^2/n\big)\leq\Prob\big(Z(R[U])\geq \beta^2 m^2/n\big)\leq e^{-\Omega(n^{1/16})}.$$
Therefore with high probability there exists a matching in $R[U]$ which contains at least
$$e(R[U])-Y(R[U])\geq \beta^2m/8- 2(\beta m)^2/n\geq 2^{-4}\beta^2m$$
edges, since $m\leq 2^{-5}n$, finishing the proof.
\end{proof}

\end{document}